\theoremstyle{plain}
\newtheorem{thm}{Theorem}[section]
\newtheorem{theorem}[thm]{Theorem}
\newtheorem{observation}[thm]{Observation}
\newtheorem{corollary}[thm]{Corollary}
\newtheorem{lemma}[thm]{Lemma}
\newtheorem{prop}[thm]{Proposition}
\newtheorem{proposition}[thm]{Proposition}
\theoremstyle{definition}
\newtheorem{de}[thm]{Definition}
\newtheorem{remark}[thm]{Remark}
\newtheorem{example}[thm]{Example}
\newtheorem{algorithm}[thm]{Algorithm}
\newcommand{\Dis}{\rm Dis}
\newcommand{\Z}{\mathbb{Z}}
\newcommand{\A}{\mathcal{A}}
\newcommand{\bigo}{\mathcal{O}}
\newcommand{\Aff}{\mathrm{Aff}}
\newcommand{\ext}[1]{\mathrm{Ext}(#1)}
\newcommand{\aff}[1]{\mathrm{Aff}(#1)}
\newcommand{\aut}[1]{\mathrm{Aut}(#1)}
\newcommand{\dis}[1]{\mathrm{Dis}(#1)}
\newcommand{\End}[1]{\mathrm{End}(#1)}
\newcommand{\lmlt}[1]{\mathrm{LMlt}(#1)}
\newcommand{\im}[1]{\mathrm{Im}(#1)}
\newcommand{\Ker}[1]{\mathrm{Ker}(#1)}
\newcommand{\ld}{\backslash}
\numberwithin{equation}{section}
\begin{document}

\title{Subquandles of affine quandles}

\author{P\v remysl Jedli\v cka}
\author{Agata Pilitowska}
\author{David Stanovsk\'y}
\author{Anna Zamojska-Dzienio}

\address{(P.J.) Department of Mathematics, Faculty of Engineering, Czech University of Life Sciences, Kam\'yck\'a 129, 16521 Praha 6, Czech Republic}
\address{(A.P., A.Z.) Faculty of Mathematics and Information Science, Warsaw University of Technology, Koszykowa 75, 00-662 Warsaw, Poland}
\address{(D.S.) Department of Algebra, Faculty of Mathematics and Physics, Charles University, Sokolovsk\'a 83, 18675 Praha 8, Czech Republic}

\email{(P.J.) jedlickap@tf.czu.cz}
\email{(A.P.) apili@mini.pw.edu.pl}
\email{(D.S.) stanovsk@karlin.mff.cuni.cz}
\email{(A.Z.) A.Zamojska-Dzienio@mini.pw.edu.pl}

\thanks{Our joint research started within the framework of the Czech-Polish cooperation grants 7AMB13PL013 and 8829/R13/R14. The second and the fourth authors were supported by the statutory grant of the Warsaw University of Technology 504/02476/1120. The third author was partly supported by the Czech Science Foundation grant 13-01832S}

\keywords{Quandles, medial quandles, affine quandles, commutator theory, abelian algebras, quasi-affine algebras, quasi-affine modes.}

\subjclass[2010]{20N02, 57M27, 08A05, 15A78.}
\date{\today}

\begin{abstract}
A quandle will be called quasi-affine, if it embeds into an affine quandle.
Our main result is a characterization of quasi-affine quandles, by group-theoretic properties of their displacement group, by a universal algebraic condition coming from the commutator theory, and by an explicit construction over abelian groups. As a consequence, we obtain efficient algorithms for recognizing affine and quasi-affine quandles, and we enumerate small quasi-affine quandles. We also prove that the ``abelian implies quasi-affine" problem of universal algebra has affirmative answer for the class of quandles.
\end{abstract}

\maketitle

\section{Introduction}

Affine quandles (also called Alexander quandles) play a prominent role in quandle theory, both from the algebraic perspective \cite{AG,ESG,Hou-aut,Hou,HSV,JPSZ}, and in applications in knot theory, due to a close connection between affine colorings and the Alexander invariant \cite{Bae,CDP,Joy}. 
In the present paper, we look at the structure and abstract properties of quandles that embed into affine quandles, that is, that are isomorphic to a subquandle of an affine quandle. We will call such quandles \emph{quasi-affine}. For example, free medial quandles are quasi-affine, but not affine \cite{JPZ}.

At the moment, our motivation is purely algebraic, with emphasis on computational aspects. What is their structure? How many are there? How to recognize them? 
We will present both a structural theorem, and a computationally feasible characterization of quasi-affine quandles (Theorem \ref{thm:quasi-affine}).
The former goal is achieved using a special kind of central extension (Definition \ref{def:ext}). Together with a convenient isomorphism theorem (Theorem \ref{thm:iso}), this allows fairly efficient enumeration (Section \ref{sec:enum}). We also present polynomial-time algorithms (subquadratic with respect to the input size) for recognition of affine and quasi-affine quandles (Algorithms \ref{alg:affine} and \ref{alg:quasi-affine}). The key property behind the results is abelianness and semiregularity of the displacement group.

A secondary motivation for our study comes from universal algebra. 
One of the major projects in universal algebra is to determine abstract conditions under which a general algebraic structure embeds into an affine one; formally, when it is a subreduct of a module \cite{SS,Sz}. Such algebras are also called quasi-affine.
In particular, a longstanding open problem asks, whether every idempotent algebraic structure satisfying certain syntactic condition called \emph{abelianness} is quasi-affine \cite{K}. We confirm the conjecture for the class of quandles. As far as we know, after \cite{Sta}, this is only the second result when the problem is confirmed for a broad class of idempotent algebras failing every non-trivial Mal'tsev condition. 

Our results are also interesting in the context of the theory of \emph{modes} \cite{RS}, which develops its own theory of linear representations (medial quandles are examples of modes).

As a byproduct, we prove several new results for affine quandles, complementing existing theory \cite{Holmes, Hou}. Our main tool, Theorem \ref{thm:affine}, characterizes affine quandles in a way similar to Theorem \ref{thm:quasi-affine}, and is of independent interest. In particular, our characterization of the displacement groups results in an algorithm for recognition of affine quandles which is a tremendous improvement over the brute-force method of \cite{MNT}. 

Affine quandles are \emph{medial}, and so are their subquandles. We could therefore build upon the structure theory developed in \cite{JPSZ}, where we represented medial quandles by certain heterogeneous affine structure. However, it turned out that our theory was easier to develop from scratch, because meshes of quasi-affine quandles are very symmetric and thus better viewed as extensions. We will use the results of \cite{JPSZ} in the final sections: the isomorphism theorem for quasi-affine quandles will be proved by specializing the (more general) isomorphism theorem for affine meshes.

The paper is organized as follows.
In Section \ref{sec:terminology}, we recall some of the quandle theory needed in the paper, and we formulate our main results, Theorems \ref{thm:quasi-affine} and \ref{thm:affine}.
Section \ref{sec:hs} contains an auxiliary module-theoretic result, called the \emph{Hou-\v S\v tov\'\i\v cek extension lemma}. We see it as \emph{the} module-theoretic principle behind the structure theory of affine quandles. 
In Section \ref{sec:ext}, we introduce semiregular extensions, a concept used to represent quasi-affine quandles, and prove a few elementary properties.
Section \ref{sec:ua} relates the universal algebraic and quandle theoretic principles.
In Section \ref{sec:proofs}, we prove Theorems \ref{thm:quasi-affine} and \ref{thm:affine}. In the finite case, the somewhat cumbersome characterization of affine quandles can be replaced by a more esthetic condition; this is the subject of Theorem \ref{thm:affine_fin}. We also include several interesting examples and counterexamples related to the abstract conditions that appear throughout the paper.
Section \ref{sec:alg} contains the recognition algorithms based on our characterizing theorems, including their complexity analysis.
In Section \ref{sec:iso}, we relate semiregular extensions to the affine meshes of \cite{JPSZ}, and prove the isomorphism theorem for semiregular extensions.
In the last section, we apply it to enumerate small quasi-affine quandles. 

The paper is aimed at both quandlists and universal algebraists. The proof of the main theorems does not rely on the universal algebraic concepts, and thus Section \ref{sec:ua} could be safely skipped. Nevertheless, we advise to take this interesting abstraction into account. 
A universal algebraic background can be learnt from \cite{Ber}.
Our approach to quandles is best summarized in the introductory parts of \cite{HSV}.
A comprehensive study of affine quandles can be found in \cite{Hou-aut,Hou}, an alternative approach was developed by Holmes in her Master's thesis \cite{Holmes}. The present work was influenced by some of her ideas presented in the thesis.



\section{Terminology and main results}\label{sec:terminology}

\subsection{Quandles}

All unproved results stated in this section can be found in the introductory part of \cite{HSV} (using the present notation), and most of them also elsewhere (often in a different notation).

We will write mappings acting on the left, hence conjugation in groups will be denoted by $x^y=yxy^{-1}$, and consequently, the commutator will be defined as $[x,y]=y^xy^{-1}=xyx^{-1}y^{-1}$.

A \emph{quandle} is an algebraic structure $Q=(Q,*)$ which is \emph{idempotent} (it satisfies the identity $x*x=x$), \emph{uniquely left divisible} (for every $x,y$, there is a unique $z$ such that $x*z=y$, to be denoted $z=x\ld y$), and \emph{left distributive} (it satisfies the identity $x*(y*z)=(x*y)*(x*z)$).
The mappings $L_x:Q\to Q$, $L_x(y)=x*y$, will be called \emph{left translations}. It follows from the quandle axioms that all left translations are automorphisms of $Q$. We will often drop the adjective ``left". For universal-algebraic purposes, we will regard left division as a basic operation, i.e., it can be used in terms.


Two important permutation groups are associated to every quandle:
the (left) \emph{multiplication group}, generated by all (left) translations,
\[\lmlt Q=\langle L_a:\ a\in Q\rangle\leq\aut Q,\]
and its subgroup, the \emph{displacement group}, defined by
\[\dis Q=\langle L_aL_b^{-1}:\ a,b\in Q\rangle\leq\lmlt Q.\]
Both groups have the same orbits of the natural action on $Q$, to be called \emph{orbits} of the quandle $Q$, and denoted 
\[Qe=\{\alpha(e):\ \alpha\in\lmlt Q\}=\{\alpha(e):\ \alpha\in\dis Q\}.\]
Orbits are subquandles of $Q$. They form a block system, to be called the \emph{orbit decomposition} of $Q$.

A quandle is called \emph{medial} (also \emph{entropic} or \emph{abelian} elsewhere) if it satisfies the identity $(x*y)*(u*v)=(x*u)*(y*v)$. 
This is equivalent to abelianness of the displacement group. 
Therefore, if $Q$ is medial, $\alpha\in\dis Q$ and $x,y\in Q$, we have $\alpha^{L_y^{-1}L_x}=\alpha$, and thus
\begin{equation}\label{Eq:conj}
\alpha^{L_x}=\alpha^{L_y}.
\end{equation}

Let $(A,+)$ be an abelian group, $f$ its automorphism, and define an operation on the set $A$ by
\[a*b=(1-f)(a)+f(b).\]
Then $(A,*)$ is a medial quandle, to be denoted $\aff{A,f}$, and called \emph{affine} over the group $(A,+)$.
Here $1$ refers to the identity mappings, hence $g=1-f$ is the mapping $g(x)=x-f(x)$.
A product of two affine quandles  $Q=\aff{A,f}$ and $R=\aff{B,g}$
is affine since $Q\times R=\aff{A\times B,f\times g}$.
Affine quandles with $f=1$ will be called \emph{projection quandles} (also \emph{trivial quandles} elsewhere), since the operation is the right projection, $a*b=b$.
The projection quandle of size~$k$ (possibly infinite) will be denoted by $\mathrm{Proj}(k)$.

Following the universal algebraic terminology, quandles embeddable into affine quandles will be called shortly \emph{quasi-affine}. 
(Contrary to universal algebra, quandle-theoretic definition of affineness is weaker. In universal algebra, an algebraic structure is called affine if and only if it is polynomially equivalent to a module; affine quandles are only assumed to be reducts of modules.)

We will say that $\dis Q$ is \emph{tiny} if $\dis{Q}=\{L_xL_e^{-1}:\ x\in Q\}$ for some $e\in Q$.
Affine quandles have tiny displacement groups (the converse is not true): for $Q=\aff{A,f}$ we have 
\[\dis Q=\{x\mapsto a+x:\ a\in\im{1-f}\}=\{L_xL_0^{-1}:x\in Q\},\] since $L_aL_b^{-1}(x)=(1-f)(a-b)+x$.
Hence $\dis Q\simeq\im{1-f}$, and the orbits of $Q$ are the cosets of $\im{1-f}$. 



\subsection{Multitransversals}

Informally, a \emph{multiset} is a generalization of the notion of a set where elements can repeat. Tuples can be turned into multisets, forgetting the indexing. Multisets will be denoted by double brackets $\{\{...\}\}$.

A multitransversal for a block system is a multiset which takes from each block the same amount of elements, i.e., a multiset $T$ such that $|T\cap B_1|=|T\cap B_2|$ for every pair of blocks $B_1,B_2$; the size of the intersection will be called the \emph{multiplicity} of $T$. If $G$ is a group and $H$ its subgroup, then by a (left) multitransversal of $G/H$ we mean a multitransversal of the block system $\{a+H:a\in G\}$.

\begin{lemma}\label{l:mtr2}
Let $G$ be a group, $\varphi\in\End G$, and let $T$ be a left transversal of $G/\im\varphi$. Then $\varphi(T)$, as a multiset, is a left multitransversal of~$\im\varphi/\im{\varphi^2}$.
The multiplicity of~$\varphi(T)$ is equal to $|\Ker\varphi/\Ker\varphi\cap\im\varphi|$.
\end{lemma}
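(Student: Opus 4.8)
The plan is to compute, for each block of the system $\{b+\im{\varphi^2}:b\in\im\varphi\}$, how many elements the multiset $\varphi(T)$ contributes to it, and to show that this number is the same for every block and equals $|\Ker\varphi/(\Ker\varphi\cap\im\varphi)|$. Since $\varphi(t)\in\im\varphi$ for every $t$, the multiset $\varphi(T)$ really does live inside $\im\varphi$, so the claim is well posed. Writing $m(b)=|\{t\in T:\varphi(t)\in b+\im{\varphi^2}\}|$ (this counts $t$'s, which is exactly the multiplicity with which $\varphi(T)$ meets the block, even if distinct $t$ happen to share the same value $\varphi(t)$), the whole lemma reduces to proving that $m(b)$ is constant in $b$ with the asserted value.

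First I would rewrite the membership condition. Given a block, pick $s\in G$ with $b=\varphi(s)$, which is possible because $b\in\im\varphi$. Using $\im{\varphi^2}=\varphi(\im\varphi)$ together with the fact that $\varphi$ is an endomorphism, the condition $\varphi(t)\in\varphi(s)+\im{\varphi^2}$ unwinds to $\varphi(t-s-h)=0$ for some $h\in\im\varphi$, that is, to $t\in s+\im\varphi+\Ker\varphi$. Hence $\{t\in T:\varphi(t)\in b+\im{\varphi^2}\}=T\cap(s+\im\varphi+\Ker\varphi)$.

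The counting step is then the heart of the matter. The set $s+\im\varphi+\Ker\varphi$ is a single coset of the subgroup $\im\varphi+\Ker\varphi$, and since $\im\varphi\le\im\varphi+\Ker\varphi$ it splits as a disjoint union of exactly $[\im\varphi+\Ker\varphi:\im\varphi]$ cosets of $\im\varphi$. Because $T$ is a transversal of $G/\im\varphi$, it meets each of those cosets in precisely one point, so $m(b)=[\im\varphi+\Ker\varphi:\im\varphi]$, which is visibly independent of $b$. The second isomorphism theorem identifies this index with $[\Ker\varphi:\Ker\varphi\cap\im\varphi]=|\Ker\varphi/(\Ker\varphi\cap\im\varphi)|$, yielding the multitransversal property and the stated multiplicity simultaneously.

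The argument is essentially bookkeeping, so I do not expect a serious obstacle; the one point needing care is recognizing that the preimage set $s+\im\varphi+\Ker\varphi$ is a union of \emph{whole} $\im\varphi$-cosets, which is precisely what lets the transversal hypothesis convert the count into a clean index. (If one reads the lemma for a genuinely non-abelian $G$, the same proof works verbatim after replacing sums by products, using that $\Ker\varphi$ is normal in $G$ so that $\im\varphi\cdot\Ker\varphi$ is again a subgroup.)
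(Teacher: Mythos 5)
Your proof is correct and is essentially the paper's own argument: both reduce the condition ``$\varphi(t)$ lies in a given block of $\im\varphi/\im{\varphi^2}$'' to ``$t$ lies in a given coset of $\Ker\varphi+\im\varphi$'', then count using the fact that $T$ meets each $\im\varphi$-coset exactly once and identify the resulting index via the second isomorphism theorem. The only cosmetic difference is that the paper phrases the first step as a pairwise comparison $\varphi(t)\varphi(s)^{-1}\in\im{\varphi^2}$ for $t,s\in T$, while you count preimages of a fixed block directly; the bookkeeping is the same.
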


\begin{proof}
Let $t,s\in T$. We have $\varphi(t)\varphi(s)^{-1}\in\im{\varphi^2}$ if and only if $\varphi(ts^{-1})=\varphi^2(a)$ for some $a\in G$.
Now $\varphi(ts^{-1})=\varphi^2(a)$ if and only if $\varphi(ts^{-1}\varphi(a)^{-1})=1$, that is, if and only if $ts^{-1}\varphi(a)^{-1}\in\Ker \varphi$. Consequently,
$\varphi(t)\varphi(s)^{-1}\in\im{\varphi^2}$ if and only if $ts^{-1}\in\Ker\varphi\cdot \im\varphi$.
Each block of $G/(\Ker \varphi\cdot\im \varphi)$ contains the same amount of blocks of $G/\im\varphi$, and thus the same amount of elements of $T$. 
Looking at the block $\Ker \varphi\cdot\im \varphi$, we see that the multiplicity is
$|(\Ker\varphi\cdot\im\varphi)/\im\varphi|$.
By the second isomorphism theorem, this is equal to $|\Ker\varphi/\Ker\varphi\cap\im\varphi|$.
\end{proof}

\subsection{Main results}

Now we can formulate the main results, the characterization theorems for affine and quasi-affine quandles.

Recall that a permutation group $G$ acting on a set $X$ is called {\em semiregular} (the terms \emph{free} or \emph{fixpoint-free} are also used in literature) if non-trivial permutations from~$G$ are regular, i.e., have no fixed points. In other words, if $g(x)\neq x$ for every $1\neq g\in G$ and $x\in X$.

The \emph{semiregular extension}, $\ext{A,f,\bar d}$, will be defined in Section \ref{sec:ext}. It is a particular type of a central extension of a projection quandle over the affine quadle $\aff{A,f}$ (for more information on centrality see Remark \ref{rem:central}).

In condition (4), abelianness refers to a certain syntactic condition, to be explained in Section \ref{sec:ua}. It is a generalization of the idea that a group $G$ is abelian if and only if the diagonal of $G^2$ forms a normal subgroup.

\begin{theorem}\label{thm:quasi-affine}
The following statements are equivalent for a quandle $Q$:
\begin{enumerate}
	\item $Q$ is quasi-affine;
	\item $\dis Q$ is abelian and semiregular;
	\item $Q$ is isomorphic to $\ext{A,f,\bar d}$ for some abelian group~$A$, its automorphism $f$ and some tuple $\bar d=(d_i:i\in I)$ of elements of $A$ (the extension can be taken indecomposable);
	\item $Q$ is abelian (in the sense of \cite{FM}).
\end{enumerate}
\end{theorem}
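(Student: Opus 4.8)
The plan is to run the cycle $(1)\Rightarrow(2)\Rightarrow(3)\Rightarrow(1)$ and to attach $(4)$ at node $(2)$ through the universal-algebraic dictionary of Section~\ref{sec:ua}; throughout, the affine characterization of Theorem~\ref{thm:affine} serves as a template. For $(1)\Rightarrow(2)$, suppose $Q$ is a subquandle of $R=\aff{A,f}$. Mediality is an identity, so it passes to subquandles, and the equivalence recalled after \eqref{Eq:conj} gives that $\dis Q$ is abelian. For semiregularity, observe that $\dis R$ consists of the translations $x\mapsto a+x$ with $a\in\im{1-f}$, which move every point unless $a=0$; hence $\dis R$ is semiregular on $R$. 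The subgroup $H=\langle L^R_a(L^R_b)^{-1}:a,b\in Q\rangle\le\dis R$ leaves $Q$ invariant, and restriction to $Q$ sends $H$ onto $\dis Q$. If a coset of $\dis Q=H/\Ker{\cdot}$ fixed a point $q\in Q\subseteq R$, any lift in $H\le\dis R$ would fix $q$ and hence be trivial, so $\dis Q$ is semiregular.

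For $(2)\Rightarrow(3)$, I would read off the extension data of Definition~\ref{def:ext} from the group action. Since $\dis Q$ and $\lmlt Q$ share their orbits, every left translation preserves each orbit setwise, so the orbit decomposition is a congruence whose quotient is a projection quandle $\mathrm{Proj}(I)$, with $I$ indexing the orbits. Abelianness and semiregularity make $\dis Q$ act regularly on each orbit, so fixing a base point in each orbit identifies it with $\dis Q$; write $A\cong\dis Q$ for this abelian group. By \eqref{Eq:conj}, conjugation by any translation induces one and the same automorphism $f$ of $A$, and the only remaining freedom --- the displacement of the chosen base points of distinct orbits relative to one another --- is precisely the information stored in the tuple $\bar d=(d_i:i\in I)$. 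Matching this bookkeeping against Definition~\ref{def:ext} yields $Q\cong\ext{A,f,\bar d}$, and discarding superfluous direct factors lets the extension be taken indecomposable.

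The substantive step is $(3)\Rightarrow(1)$: one must embed $\ext{A,f,\bar d}$ into a genuinely affine quandle. The obstruction is that the projection quotient $\mathrm{Proj}(I)$ is not itself a single orbit of any translation group, so one has to enlarge $A$, adjoining fresh directions for the projection part and extending $f$ so that $1-f$ carries the new generators onto the displacement data $\bar d$. Solvability of this extension problem is exactly the content of the Hou-\v S\v tov\'\i\v cek lemma of Section~\ref{sec:hs}, which furnishes an abelian group $B\supseteq A$ and an automorphism $g$ with $\ext{A,f,\bar d}\hookrightarrow\aff{B,g}$; the multiplicity count of Lemma~\ref{l:mtr2} is what controls how the fibers of the enveloping quandle distribute over its orbits, so that the enlargement is consistent. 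I expect this to be the main obstacle, since it is the ``abelian implies quasi-affine'' heart of the theorem and is not formal --- it is the reason the module-theoretic lemma is isolated as the principle behind the whole structure theory.

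Finally, $(2)\Leftrightarrow(4)$ is the translation carried out in Section~\ref{sec:ua}: for idempotent medial quandles the term condition defining abelianness in the sense of \cite{FM} unwinds into abelianness of $\dis Q$ together with the absence of fixed points of its action, that is, semiregularity. Combined with the cycle above, this closes all four equivalences and, as a corollary, confirms the ``abelian implies quasi-affine'' problem for the class of quandles.
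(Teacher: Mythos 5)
Your outline of $(1)\Rightarrow(2)$ and $(2)\Rightarrow(3)$ matches the paper (the latter is exactly Lemma \ref{l:ext2}), but your step $(3)\Rightarrow(1)$ has a genuine gap. You propose to apply the Hou--\v S\v tov\'\i\v cek lemma directly to $\ext{A,f,\bar d}$ for an arbitrary tuple $\bar d$. That cannot work as stated: the lemma (with $\varphi=1-f$) produces a group $E\geq A$ with $E/A\simeq A/\im{1-f}$, so the affine quandle $\aff{E,1-\psi}$ has exactly $|A/\im{1-f}|$ orbits, each a single coset of $A$. If the tuple $\bar d$ hits some coset of $\im{1-f}$ more than once --- e.g.\ $\ext{\Z_2,1,(0,0,1)}$, which has $6$ elements while the corresponding $E$ has only $4$ --- then two disjoint fibers of the extension would have to inject into one orbit of size $|A|$, which is impossible. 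The missing idea, which is where the paper's proof of $(3)\Rightarrow(1)$ actually lives, is an elementary combinatorial reduction: first pad $\bar d$ out to a \emph{multitransversal} $\bar e$ of $A/\im{1-f}$, so that $\ext{A,f,\bar d}$ embeds into $\ext{A,f,\bar e}$; then invoke Theorem \ref{thm:affine}\,$(3)\Rightarrow(1)$, whose proof uses Proposition \ref{p:prod} to split off a projection-quandle direct factor (absorbing the multiplicity) and only then applies Hou--\v S\v tov\'\i\v cek to the remaining \emph{transversal}, where the coset count matches and a transversal $(e_i)$ of $E/A$ with $\psi(e_i)=d_i$ exists. Your appeal to Lemma \ref{l:mtr2} to ``control how the fibers distribute'' does not substitute for this reduction.

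A secondary issue: you present $(2)\Leftrightarrow(4)$ as a routine unwinding of the term condition. Only the direction $(4)\Rightarrow(2)$ is a syntactic computation (Lemma \ref{l:semireg-abelian}); the converse $(2)\Rightarrow(4)$ is \emph{not} formal --- it is essentially the ``abelian implies quasi-affine'' content of the whole theorem, and the paper obtains it only by going around the cycle $(2)\Rightarrow(3)\Rightarrow(1)\Rightarrow(4)$, where $(1)\Rightarrow(4)$ is the general fact that subreducts of modules are abelian. Attaching $(4)$ to $(2)$ by a claimed two-way dictionary would be circular unless you supply the representation first.
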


\begin{theorem}\label{thm:affine}
The following statements are equivalent for a quandle $Q$:
\begin{enumerate}
	\item $Q$ is affine;
	\item $\dis Q$ is abelian, semiregular and the multiset $\{\{L_aL_e^{-1}:a\in T\}\}$ is balanced for every $e\in Q$ and every transversal $T$ of the orbit decomposition. 
	\item[(2')] $\dis Q$ is abelian, semiregular and the multiset $\{\{L_aL_e^{-1}:a\in T\}\}$ is balanced for some $e\in Q$ and some transversal $T$ of the orbit decomposition. 
	\item[(3)] $Q$ is isomorphic to $\ext{A,f,\bar d}$ for some abelian group~$A$, its automorphism $f$ and some balanced tuple $\bar d=(d_i:i\in I)$ of elements of $A$ (the extension can be taken indecomposable).
\end{enumerate}
\end{theorem}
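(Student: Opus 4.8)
The plan is to treat Theorem~\ref{thm:quasi-affine} as a black box and observe that each of (2), (2'), (3) differs from the corresponding quasi-affine statement only by the \emph{balanced} clause: abelianness and semiregularity of $\dis Q$, and the availability of a representation $Q\cong\ext{A,f,\bar d}$, are already granted there, so the entire new content is the propagation of balancedness. I would therefore prove the cycle $(1)\Rightarrow(2)\Rightarrow(2')\Rightarrow(3)\Rightarrow(1)$; in particular, ``balanced for some $e,T$'' will be upgraded to ``balanced for all $e,T$'' for free by passing through affineness, so no separate $(2')\Rightarrow(2)$ argument is needed.

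For $(1)\Rightarrow(2)$, write $Q=\aff{A,f}$ and $B=\im{1-f}$. Recall from Section~\ref{sec:terminology} that $\dis Q\cong B$ via $L_xL_e^{-1}\mapsto(1-f)(x-e)$, that the orbits are the cosets of $B$, and that conjugation by a translation induces on $\dis Q$ a canonical automorphism $\hat f$, which in these coordinates is $f|_B$ (cf.\ \eqref{Eq:conj}), so that $\dis Q/\im{1-\hat f}\cong B/\im{(1-f)^2}$. A transversal $T$ of the orbit decomposition is exactly a transversal of $A/B$, and the multiset $\{\{L_aL_e^{-1}:a\in T\}\}$ corresponds inside $B$ to $\{\{(1-f)(a)-(1-f)(e):a\in T\}\}$. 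By Lemma~\ref{l:mtr2} applied to $\varphi=1-f$, the multiset $(1-f)(T)$ is a multitransversal of $B/\im{(1-f)^2}$, and subtracting the fixed element $(1-f)(e)\in B$ merely permutes the cosets of $\im{(1-f)^2}$ and hence preserves multitransversality. This is precisely balancedness, for \emph{every} $e$ and $T$. The implication $(2)\Rightarrow(2')$ is trivial.

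For $(2')\Rightarrow(3)$, Theorem~\ref{thm:quasi-affine} already yields an indecomposable representation $Q\cong\ext{A,f,\bar d}$, so it remains only to identify the balanced multiset condition with balancedness of the tuple $\bar d$. Unwinding the definition of the extension from Section~\ref{sec:ext}, the elements $L_aL_e^{-1}$, as $a$ runs over a transversal picking one point from each orbit, reduce in $\dis Q$ to the entries of $\bar d$ read modulo $\im{1-\hat f}$; Lemma~\ref{l:mtr2} is once more the bridge between a transversal of the base $A/\im{1-f}$ and the induced multitransversal in $\dis Q$. Hence the multiset is balanced if and only if $\bar d$ is a balanced tuple.

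The hard part will be $(3)\Rightarrow(1)$: showing that a \emph{balanced} extension is genuinely affine, i.e.\ manufacturing a single abelian group and automorphism from the extension data. The strategy is to feed the displacement data --- the abelian group $\dis Q$ together with its canonical automorphism $\hat f$ --- into the Hou--\v S\v tov\'\i\v cek extension lemma of Section~\ref{sec:hs}, so as to build an abelian group $A'\supseteq\dis Q$ and an automorphism $f'$ of $A'$ with $\im{1-f'}\cong\dis Q$ and with the cosets $A'/\im{1-f'}$ matching the orbit structure prescribed by $\bar d$; one then checks $Q\cong\aff{A',f'}$. The role of balancedness is exactly to supply the uniform multiplicities --- equal numbers of orbits lying over each coset, in the sense of Lemma~\ref{l:mtr2} --- without which no map of the form $1-f'$ could have the prescribed image and cokernel simultaneously. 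I expect the final verification that the reconstructed affine operation agrees with that of the extension to be routine once $A'$ and $f'$ are produced; the genuine obstacle is invoking the extension lemma with the correct input and confirming that the balanced hypothesis is precisely what that lemma consumes.
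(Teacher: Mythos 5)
Your cycle $(1)\Rightarrow(2)\Rightarrow(2')\Rightarrow(3)\Rightarrow(1)$ and the treatment of the first three implications match the paper's proof essentially step for step: $(1)\Rightarrow(2)$ via Lemma~\ref{l:mtr2} applied to $\varphi=1-f$ after identifying $[\dis Q,L_e]$ with $\im{(1-f)^2}$ inside $\dis Q\simeq\im{1-f}$, and $(2')\Rightarrow(3)$ via the representation $\ext{\dis Q,f,(L_xL_e^{-1}:x\in T)}$ of Lemma~\ref{l:ext2} together with the observation $[\dis Q,L_e]=\im{1-f}$. (Two small cautions there: you should cite Lemma~\ref{l:ext2} directly rather than Theorem~\ref{thm:quasi-affine}, whose implication $(3)\Rightarrow(1)$ the paper derives \emph{from} Theorem~\ref{thm:affine}, so invoking the wrong part of it would be circular; and in $(2')\Rightarrow(3)$ no second appeal to Lemma~\ref{l:mtr2} is needed --- the tuple $\bar d$ literally \emph{is} the multiset $\{\{L_aL_e^{-1}:a\in T\}\}$, so the two balancedness conditions coincide on the nose once $[\dis Q,L_e]=\im{1-f}$ is checked.)

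The genuine gap is in $(3)\Rightarrow(1)$. You plan to feed the extension data directly into the Hou--\v S\v tov\'\i\v cek lemma and obtain $E\geq A$ with $E/A\simeq A/\im{1-f}$ whose cosets ``match the orbit structure prescribed by $\bar d$.'' But when $\bar d$ is a multitransversal of multiplicity $k>1$, the index set $I$ has $k\cdot|A/\im{1-f}|$ elements while $E/A$ has only $|A/\im{1-f}|$ cosets, so no transversal $(e_i)_{i\in I}$ of $E/A$ with $\psi(e_i)=d_i$ can exist and the intended isomorphism $(i,a)\mapsto e_i+a$ onto $\aff{E,1-\psi}$ cannot be defined. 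Concretely, for $\ext{\Z_2,1,(0,0,1,1)}$ the lemma applied to $\varphi=1-f=0$ produces a group $E$ of order $4$, whereas the quandle has order $8$. The paper's proof inserts exactly the missing step: Proposition~\ref{p:prod} first splits off a projection-quandle factor, reducing to the case where $\bar d$ is an honest transversal (multiplicity $1$), and only then applies the extension lemma; since products of affine quandles are affine, this suffices. Your sketch correctly identifies balancedness as the hypothesis the lemma consumes, but without this reduction the ``matching'' you describe fails whenever the multiplicity exceeds one.
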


A multiset $\{\{L_aL_e^{-1}:a\in T\}\}$ is called \emph{balanced} if it is a multitransversal of $\dis Q/[\dis Q,L_e]$.
A tuple $\bar d$ is called \emph{balanced}, if it is a multitransversal of $A/\im{1-f}$.
As we shall see in Theorem \ref{thm:affine_fin}, if $Q$ is finite, than these two balancedness conditions are equivalent to the fact that the multiplication table of $Q$ is balanced in a particularly nice way.

Proving the implications $(1)\Rightarrow(2)$ is fairly straightforward, and the semiregular extensions are designed in a way that the implications $(2)\Rightarrow(3)$ also prove smoothly. The real work is proving the implications $(3)\Rightarrow(1)$.
In either case, we are given a particular semiregular extension $Q$, and we need to find an affine representation, that is, a concrete group $A$ and its automorphism $f$ such that $Q$ is isomorphic to, resp. embeds into, $\aff{A,f}$. This is not as easy as one might expect. One of the difficulties is that the group $A$ is not determined uniquely, not even in the affine case. Our method relies on the Hou-\v S\v tov\'\i\v cek extension lemma, which provides a suitable group in the affine case. Therefore, we first prove Theorem \ref{thm:affine}, and then obtain Theorem \ref{thm:quasi-affine} as a corollary.

Our proof of the Hou-\v S\v tov\'\i\v cek extension lemma is not constructive: the abelian group is proved to exist, but no concrete description is given. 
At the moment, we do not know an explicit construction of the affine representation. This has an algorithmic consequence: we are able to check efficiently whether a given multiplication table defines an affine (or quasi-affine) quandle, but we do not know an efficient way to determine the actual group and automorphism.

The universal algebraic perspective also suggests that calculating an explicit affine representation might be difficult: it was so for many of the general results.
For example, the quasi-affine representation proved in \cite{Sta} for differential modes is also non-constructive, using the indirect syntactic method of \cite{SS}.

\section{The Hou-\v S\v tov\'\i\v cek extension lemma}\label{sec:hs}

\begin{lemma}[Hou-\v S\v tov\'\i\v cek extension lemma]\label{l:hs}
Let $A$ be an abelian group and $\varphi$ its endomorphism. Then there exist an abelian group $E\geq A$ and an epimorphism $\psi:E\to A$ such that $\psi|_A=\varphi$ and $\psi/A:E/A\simeq A/\im{\varphi}$.
\end{lemma}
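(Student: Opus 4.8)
The plan is to reduce the lemma to a standard vanishing result in the homological algebra of abelian groups, namely that $\operatorname{Ext}^2_{\Z}$ is identically zero. Write $I=\im{\varphi}$ and $C=A/I$, and let $\xi\in\operatorname{Ext}^1(C,I)$ denote the class of the short exact sequence $0\to I\xrightarrow{\iota}A\xrightarrow{\pi}C\to 0$ given by the inclusion of the image and the canonical projection. The corestriction of $\varphi$ is an epimorphism $\bar\varphi\colon A\to I$ with kernel $K=\Ker{\varphi}$, and I read the lemma as asking for a morphism of short exact sequences from an extension $0\to A\to E\to C\to 0$ onto $\xi$, whose left component is $\varphi$ and whose middle component is the desired $\psi$. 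This is precisely what is needed: the commuting left square says $\psi|_A=\varphi$, the commuting right square says $\pi\psi=\rho$ (where $\rho\colon E\to C$ is the quotient map), and then $\psi$ is onto by the five lemma while $\pi\psi=\rho$ forces $\psi^{-1}(I)=A$, so that $\psi/A$ is the claimed isomorphism.

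First I would produce the top extension. The short exact sequence $0\to K\to A\xrightarrow{\bar\varphi}I\to 0$ induces the long exact sequence
\[
\operatorname{Ext}^1(C,A)\xrightarrow{\;\bar\varphi_*\;}\operatorname{Ext}^1(C,I)\xrightarrow{\;\partial\;}\operatorname{Ext}^2(C,K),
\]
and since every abelian group has projective dimension at most one (subgroups of free abelian groups are free), the right-hand term vanishes. Hence $\bar\varphi_*$ is surjective and I may choose $\eta\in\operatorname{Ext}^1(C,A)$ with $\bar\varphi_*\eta=\xi$. Realizing $\eta$ as an honest extension $0\to A\xrightarrow{\alpha}E\xrightarrow{\rho}C\to 0$, and recalling that $\bar\varphi_*$ is computed by pushout along $\bar\varphi$, the equality $\bar\varphi_*\eta=\xi$ unwinds into a commuting diagram of exactly the shape described above. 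This supplies a map $\psi\colon E\to A$ with $\psi\alpha=\iota\bar\varphi=\varphi$ and $\pi\psi=\rho$.

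Then I would finish by checking the three required properties against this diagram, identifying $A$ with $\alpha(A)\le E$. The relation $\psi\alpha=\varphi$ is precisely $\psi|_A=\varphi$; applying the four/five lemma to the vertical maps $\bar\varphi$ (epi) and $\mathrm{id}_C$ (iso) shows that $\psi$ is an epimorphism; and from $\pi\psi=\rho$ one reads off $\psi^{-1}(I)=\Ker{\rho}=A$, so the induced map $\psi/A\colon E/A\to A/I$ is injective, hence an isomorphism since $\psi$ is onto. Thus $E\ge A$, $\psi$, and $\psi/A$ have all the asserted features.

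The main obstacle is the existence step: the surjectivity of $\bar\varphi_*$, equivalently the vanishing of $\operatorname{Ext}^2_{\Z}(C,K)$. Everything else is diagram chasing, but there is no reason to expect a canonical preimage $\eta$, since the vanishing of $\operatorname{Ext}^2_{\Z}$ ultimately rests on the freeness of subgroups of free abelian groups, which for infinite rank is proved by a well-ordering argument and produces no explicit basis. This is exactly why the resulting group $E$ admits no concrete description, matching the non-constructive character of the lemma noted in the introduction.
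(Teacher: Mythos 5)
Your proposal is correct and follows essentially the same route as the paper: the paper also applies $\mathrm{Hom}(A/\im\varphi,-)$ to $0\to\Ker\varphi\to A\xrightarrow{\varphi}\im\varphi\to 0$, uses $\mathrm{Ext}^2=0$ (hereditariness of $\Z$) to get surjectivity of the induced map on $\mathrm{Ext}^1$, and realizes a preimage of the class of $0\to\im\varphi\to A\to A/\im\varphi\to 0$ as a pushout diagram from which $\psi$ and its properties are read off. Your closing remarks on non-constructivity likewise match the paper's discussion.
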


Here $\psi/A$ is defined by $x+A\mapsto\psi(x)+\im\varphi$. It is a well defined homomorphism, because $x-y\in A$ if and only if $\psi(x)-\psi(y)\in\im\varphi$.

A similar statement was originally proved by Hou \cite[Theorem 4.2]{Hou} under the assumption that $A$ is finite, and used in his enumeration of small affine quandles (or $\Z[t,t^{-1}]$-modules, from his perspective). Later, it found use in Holmes' alternative approach to affine quandles \cite{Holmes}, and in the present paper, it serves as \emph{the} underlying result behind finding the affine representation in the proof of the main theorems.

\v S\v tov\'\i\v cek deserves credit for pointing out that the statement is actually a special case of a classical result in homological algebra, a characterization of hereditary rings in terms of the $\mathrm{Ext^2}$ functor, and that it holds without the finiteness assumption \cite{Sto}.
In fact, the statement is true for modules over any hereditary ring, not just for $\Z$-modules. A ring $R$ is called (left) \emph{hereditary} if submodules of projective (left) $R$-modules are projective, or equivalently, if $\mathrm{Ext}^2(A,B)=0$ for every pair of (left) $R$-modules $A,B$ \cite[Chapter 8]{Rot}. The ring of integers is hereditary, because subgroups of free abelian groups are free. 

\begin{lemma}[Hou-\v S\v tov\'\i\v cek extension lemma, a general version]\label{l:hs_gen}
Let $R$ be a hereditary ring, $A$ an $R$-module and $\varphi$ its endomorphism. Then there exist an $R$-module $E\geq A$ and an epimorphism $\psi:E\to A$ such that $\psi|_A=\varphi$ and $\psi/A:E/A\simeq A/\im{\varphi}$.
\end{lemma}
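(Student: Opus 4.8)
The plan is to realize $E$ as a pushout built from a projective presentation of the cokernel $A/\im\varphi$, with the hereditary hypothesis entering exactly once, to promote a syzygy to a projective module. Before constructing anything, it is worth recording what the conclusion forces: since $\psi/A$ must be injective, we need $\psi^{-1}(\im\varphi)=A$, whence $\Ker\psi\subseteq A$, and as $\psi|_A=\varphi$ this forces $\Ker\psi=\Ker\varphi$. Conceptually, producing the pair $(E,\psi)$ amounts to splitting the $2$-extension
$$0 \to \Ker\varphi \to A \xrightarrow{\varphi} A \xrightarrow{\pi} A/\im\varphi \to 0,$$
whose Yoneda class lives in $\mathrm{Ext}^2(A/\im\varphi,\Ker\varphi)$; this group vanishes precisely because $R$ is hereditary. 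Rather than unwind the Yoneda correspondence, I would construct $E$ by hand, using heredity only in the equivalent guise that submodules of projectives are projective.

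Concretely, I would choose a projective presentation $0 \to \Omega \hookrightarrow P \xrightarrow{p} A/\im\varphi \to 0$ with $P$ projective. Here is where heredity is used: as a submodule of the projective module $P$, the syzygy $\Omega$ is itself projective. Using projectivity of $P$ and surjectivity of $\pi\colon A \to A/\im\varphi$, lift $p$ to $\tilde p\colon P \to A$ with $\pi\tilde p = p$. Restricting to $\Omega$ yields a map $\tilde p|_\Omega\colon \Omega \to \im\varphi$, since $p|_\Omega=0$ forces its image into $\Ker\pi=\im\varphi$. Now comes the crucial step: because $\Omega$ is projective and the corestriction $\bar\varphi\colon A \to \im\varphi$ is onto, I can lift $\tilde p|_\Omega$ to a map $h\colon \Omega \to A$ with $\varphi h = \tilde p|_\Omega$. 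I then define $E$ as the pushout of $A \xleftarrow{h} \Omega \hookrightarrow P$, that is $E=(A\oplus P)/\{(h(\omega),-\omega):\omega\in\Omega\}$, embed $A$ via $a\mapsto (a,0)$, and set $\psi((a,x))=\varphi(a)+\tilde p(x)$.

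Finally I would verify the three required properties. The identity $\varphi h=\tilde p|_\Omega$ is exactly what makes $\psi$ well defined on the quotient $E$. Injectivity of $A\hookrightarrow E$ holds because the pushout of the monomorphism $\Omega\hookrightarrow P$ is again a monomorphism, and the standard identification $E/A\cong\mathrm{coker}(\Omega\hookrightarrow P)=P/\Omega\cong A/\im\varphi$ supplies the short exact sequence $0\to A\to E\to A/\im\varphi\to 0$. That $\psi|_A=\varphi$ is immediate from the formula, and $\psi$ is onto because $\psi(E)=\im\varphi+\tilde p(P)=A$, the last equality holding since $\tilde p(P)$ surjects onto $A/\im\varphi$. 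Tracing the identifications shows $\psi/A$ coincides with the isomorphism $E/A\cong A/\im\varphi$, using $\tilde p(x)+\im\varphi=\pi\tilde p(x)=p(x)$.

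The main obstacle is genuinely the lifting $h$: without it the formula for $\psi$ does not descend to $E$, and such an $h$ exists only because heredity makes $\Omega$ projective. This single step is the entire module-theoretic content of $\mathrm{Ext}^2=0$; everything else in the argument is formal pushout bookkeeping and can be carried out over an arbitrary ring.
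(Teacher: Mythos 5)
Your proof is correct, but it takes a genuinely different route from the paper's. The paper stays at the level of derived functors: it applies $\mathrm{Hom}(A/\im{\varphi},-)$ to the short exact sequence $0\to\Ker\varphi\to A\to\im\varphi\to 0$, reads off from $\mathrm{Ext}^2(A/\im{\varphi},\Ker\varphi)=0$ that $\mathrm{Ext}^1(A/\im{\varphi},A)\to\mathrm{Ext}^1(A/\im{\varphi},\im\varphi)$ is onto, and obtains $E$ as a preimage of the class of $0\to\im\varphi\to A\to A/\im{\varphi}\to 0$, using the pushout description of that induced map (citing Rotman). You instead use heredity only in its elementary form --- submodules of projectives are projective --- to make the syzygy $\Omega$ in a projective presentation of $A/\im{\varphi}$ projective, perform two concrete liftings ($\tilde p$ through $\pi$ and $h$ through the corestriction of $\varphi$), and assemble $E$ as an explicit pushout $(A\oplus P)/\{(h(\omega),-\omega)\}$ with $\psi(a,x)=\varphi(a)+\tilde p(x)$; your verifications (well-definedness from $\varphi h=\tilde p|_\Omega$, injectivity of $A\hookrightarrow E$, surjectivity of $\psi$, and the identification of $\psi/A$ with the canonical isomorphism $E/A\cong P/\Omega\cong A/\im{\varphi}$, which in particular gives $\psi^{-1}(\im\varphi)=A$) all check out. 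What your version buys is self-containedness --- no long exact sequence and no appeal to the pushout characterization of the functoriality of $\mathrm{Ext}^1$ --- and a somewhat more explicit description of $E$, which is relevant given the paper's remark that its own proof is non-constructive; the residual non-constructivity in your argument is confined to the choices of $P$, $\tilde p$ and $h$. What the paper's version buys is brevity and a transparent identification of the statement with the classical $\mathrm{Ext}^2$-characterization of hereditary rings, which is exactly the Yoneda-class framing you mention but deliberately avoid unwinding.
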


\begin{proof}
Consider an arbitrary short exact sequence 
$$\begin{CD}
0 @>>> K @>>> E @>\varphi>> I @>>> 0.
\end{CD}$$
Applying the $\mathrm{Hom}(X,-)$ functor, we obtain an exact sequence
$$\begin{CD}
... @>>> \mathrm{Ext}^1(X,E) @>\mathrm{Hom}(X,\varphi)>> \mathrm{Ext}^1(X,I) @>>> \mathrm{Ext}^2(X,K) @>>> ...
\end{CD}$$
(see \cite[Corollary 6.46]{Rot}). Over a hereditary ring, $\mathrm{Ext}^2(X,K)=0$, and thus $\mathrm{Hom}(X,\varphi)$ is onto.

Applying the general idea to the exact sequence
$$\begin{CD}
0 @>>> \Ker\varphi @>>> A @>\varphi>> \im\varphi @>>> 0
\end{CD}$$
and $X=A/\im\varphi$, we obtain that $\mathrm{Ext}^1(A/\im\varphi,\varphi)$ maps the group $\mathrm{Ext}^1(A/\im\varphi,A)$ onto the group $\mathrm{Ext}^1(A/\im\varphi,\im\varphi)$.
Considering a preimage of the exact sequence 
$$\begin{CD}
0 @>>> \im\varphi @>>> A @>\pi>> A/\im\varphi @>>> 0,
\end{CD}$$
we obtain a module $E$ and homomorphisms $i,\rho,\psi$ such that 
$$\begin{CD}
0 @>>> A @>i>> E @>\rho>> A/\im\varphi @>>>0 \\ 
@. @VV{\varphi}V @VV{\psi}V @|\\
0 @>>> \im\varphi @>{\subseteq}>> A @>\pi>> A/\im\varphi @>>> 0
\end{CD}$$
is a commutative diagram where the left square is a pushout \cite[Lemma 7.28]{Rot}.
Since $i$ is injective (the sequence is exact), we can assume it is an inclusion. Since $\varphi$ is surjective, so is $\psi$, as in any pushout. From the left square, we see that $\psi i=\varphi$, i.e., $\psi|_A=\varphi$. From the right square, we see that $\rho(x)=\pi\psi(x)=\psi(x)+\im\varphi$, and since $\Ker\rho=A$, we obtain that $\psi/A:E/A\simeq A/\im{\varphi}$.
\end{proof}

\section{Semiregular extensions}\label{sec:ext}

\begin{de}\label{def:ext}
Let $A$ be an abelian group, $f$ an automorphism of $A$, $I$ a non-empty set and $d_i\in A$ for $i\in I$. Define an operation on the set $I\times A$ by
\[(i,a)*(j,b)=(j,(1-f)(a)+f(b)+d_i-d_j).\]
It is straightforward to check that the resulting structure $(I\times A,*)$ is a quandle, with
\[(i,a)\ld(j,b)=(j,(1-f^{-1})(a)+f^{-1}(b-d_i+d_j)).\]
The projection $\pi:I\times A\to I$ is a quandle homomorphism onto the projection quandle over $I$, and the fibres of $\pi$,
as subquandles, are all isomorphic to the affine quandle $\aff{A,f}$. We will denote the quandle $(I\times A,*)$ by $\ext{A,f,\bar d}$, where $\bar d=(d_i:i\in I)$, and call it a \emph{semiregular extension over $\aff{A,f}$}.
\end{de}

The name is justified by the following lemma.

\begin{lemma}\label{l:ext1}
Let $A$ be an abelian group, $f$ an automorphism of $A$, $\bar d=(d_i:i\in I)$ a tuple of elements from $A$, and let $Q=\ext{A,f,\bar d}$. Then $\dis Q$ is an abelian semiregular group.
\end{lemma}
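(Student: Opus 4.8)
The plan is to compute the displacement generators $L_{(i,a)}L_{(j,b)}^{-1}$ explicitly and to observe that they all act as fibre-wise translations of the second coordinate. First I would record the left translation
\[
L_{(i,a)}(k,c)=(k,\ (1-f)(a)+f(c)+d_i-d_k),
\]
and its inverse, read off from the formula for $\ld$ in Definition \ref{def:ext}, namely $L_{(i,a)}^{-1}(k,c)=(k,\ (1-f^{-1})(a)+f^{-1}(c)-f^{-1}(d_i)+f^{-1}(d_k))$.

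The key computation is then to compose these. A direct calculation, in which the terms $\pm f^{-1}(d_j),\pm f^{-1}(d_k)$ and the $f$-conjugated contributions cancel, yields
\[
L_{(i,a)}L_{(j,b)}^{-1}(k,c)=\bigl(k,\ c+(1-f)(a-b)+d_i-d_j\bigr).
\]
Thus every generator preserves the first coordinate and shifts the second by an element of $A$ independent of $(k,c)$. Writing $\delta_e$ for the map $(k,c)\mapsto(k,c+e)$, we get $L_{(i,a)}L_{(j,b)}^{-1}=\delta_e$ with $e=(1-f)(a-b)+d_i-d_j$, and evidently $\delta_e\delta_{e'}=\delta_{e+e'}$ since $A$ is abelian.

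Because the generators are translations and translations compose additively, $\dis Q$ consists precisely of the maps $\delta_e$ for $e$ ranging over the subgroup $D\leq A$ generated by $\im{1-f}$ together with all differences $d_i-d_j$; moreover $e\mapsto\delta_e$ is a group isomorphism $D\to\dis Q$, injective because $\delta_e=\mathrm{id}$ forces $e=0$. Hence $\dis Q\cong D\leq A$ is abelian. For semiregularity, if $\delta_e\neq\mathrm{id}$ then $e\neq 0$, and $\delta_e(k,c)=(k,c+e)\neq(k,c)$ for every $(k,c)$, so $\delta_e$ has no fixed point; thus $\dis Q$ acts semiregularly.

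I expect the only genuine work to be the bookkeeping in the central composition; conceptually the statement is immediate once one sees that all displacements act as pure translations of the fibre coordinate, so that $\dis Q$ embeds into $(A,+)$ and acts freely on the fibres.
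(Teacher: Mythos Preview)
Your proof is correct and follows essentially the same approach as the paper: both compute $L_{(i,a)}L_{(j,b)}^{-1}(k,c)=(k,c+(1-f)(a-b)+d_i-d_j)$ directly and observe that the generators of $\dis Q$ act as translations of the second coordinate by a constant independent of $(k,c)$, whence commutativity and semiregularity are immediate. Your version is slightly more verbose (writing out $L_{(i,a)}^{-1}$ explicitly and introducing the $\delta_e$ notation and the subgroup $D$), but the argument is the same.
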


\begin{proof}
It is straightforward to calculate that
\[L_{(i,a)}L_{(j,b)}^{-1}(k,c)=(k,c+(1-f)(a-b)+d_i-d_j)=(k,c+t_{i,j,a,b}),\]
where $t_{i,j,a,b}\in A$ is an element of $A$ independent of $k,c$. We see that all generators of $\dis Q$ act as translations over the abelian group $A$, therefore, $\dis Q$ is commutative and semiregular.
\end{proof}

A converse also holds: every quandle with an abelian and semiregular displacement group admits a representation as a semiregular extension. An extension $E=\ext{A,f,\bar d}$ is called \emph{indecomposable}, if the fibers $\{i\}\times A$ are the orbits of $E$.

\begin{lemma}\label{l:ext2}
Let $Q$ be a medial quandle. Let \[E=\ext{\dis Q,f,(L_xL_e^{-1}:x\in T)},\] where
$e\in Q$, $T$ is a transversal of the orbit decomposition of $Q$, and $f$ is the automorphism of $\dis Q$ defined by $f(\alpha)=\alpha^{L_e}$. Then $E$ maps homomorphically onto $Q$, and if $\dis Q$ is semiregular, then $E$ is indecomposable and isomorphic to $Q$.
\end{lemma}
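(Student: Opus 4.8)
The plan is to exhibit the obvious candidate map and verify it by hand. Define $\Phi\colon E\to Q$ by $\Phi(x,\alpha)=\alpha(x)$, where $x\in T$ is regarded as an element of $Q$ and $\alpha\in\dis Q$ acts on $Q$ in the natural way. Surjectivity is immediate from the hypotheses: since $T$ is a transversal of the orbit decomposition, every $q\in Q$ lies in the orbit of a (unique) $x\in T$, and since $\dis Q$ acts transitively on that orbit, $q=\alpha(x)=\Phi(x,\alpha)$ for a suitable $\alpha$. Thus once $\Phi$ is shown to preserve $*$, the first assertion ($E$ maps homomorphically onto $Q$) follows.

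The technical heart is checking $\Phi\big((x,\alpha)*(y,\beta)\big)=\alpha(x)*\beta(y)$. On the right-hand side, $\alpha(x)*\beta(y)=L_{\alpha(x)}(\beta(y))=\alpha L_x\alpha^{-1}\beta(y)$, using that every $\alpha\in\dis Q\le\aut Q$ satisfies $\alpha L_x\alpha^{-1}=L_{\alpha(x)}$. The decisive simplification is the conjugation identity \eqref{Eq:conj}: for a medial quandle it gives $\alpha^{L_x}=\alpha^{L_e}=f(\alpha)$ for \emph{every} $x$, which lets me rewrite $L_x\alpha^{-1}=f(\alpha)^{-1}L_x$ and $L_x\beta=f(\beta)L_x$, so the right-hand side collapses to $\alpha f(\alpha)^{-1}f(\beta)L_x(y)$. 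On the left-hand side, the definition of the extension, the identity $d_xd_y^{-1}=L_xL_y^{-1}$, and idempotence ($L_y^{-1}(y)=y$, whence $L_xL_y^{-1}(y)=L_x(y)$) reduce $\Phi$ of the product to the same expression $\alpha f(\alpha)^{-1}f(\beta)L_x(y)$. Commutativity of $\dis Q$ is what permits all the reordering of factors, so this step is exactly where mediality is used; I expect it to be the only genuine obstacle, the rest being structural bookkeeping.

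Assume now that $\dis Q$ is semiregular. A semiregular group acting transitively acts regularly, so $\dis Q$ acts regularly on each orbit, and therefore $\alpha\mapsto\alpha(x)$ is a \emph{bijection} from $\dis Q$ onto the orbit of $x$. Consequently $\Phi$ restricts to a bijection of each fibre $\{x\}\times\dis Q$ onto the orbit of $x\in T$. Since distinct elements of the transversal lie in distinct, hence disjoint, orbits whose union is $Q$, the map $\Phi$ is a bijection; being also a homomorphism, it is an isomorphism.

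Finally, for indecomposability I invoke the computation in the proof of Lemma \ref{l:ext1}, which shows that every generator of $\dis E$ fixes the first coordinate; hence each orbit of $E$ is contained in a single fibre $\{x\}\times\dis Q$. As $\Phi$ is an isomorphism carrying each such fibre onto a single orbit of $Q$, each fibre must be a single orbit of $E$, so the fibres coincide with the orbits of $E$. This is precisely the definition of an indecomposable extension, completing the argument.
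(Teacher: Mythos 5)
Your proposal is correct and follows essentially the same route as the paper: the same map $\Phi(x,\alpha)=\alpha(x)$, the same use of identity \eqref{Eq:conj} to replace conjugation by $L_x$ with conjugation by $L_e$ in the homomorphism check, and the same identification of fibres with orbits for injectivity and indecomposability. The only cosmetic difference is that you verify the homomorphism property by reducing both sides to the common expression $\alpha f(\alpha)^{-1}f(\beta)L_x(y)$, whereas the paper transforms the left-hand side directly into the right-hand side.
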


\begin{proof}
We have $E=T\times\dis Q$.
Define a mapping \[\Phi:E\to Q,\quad (x,\alpha)\mapsto \alpha(x).\]
It is onto $Q$, because $T$ is a transversal of the action of $\dis Q$ on $Q$.
We prove that the mapping $\Phi$ is a~homomorphism:
\begin{align*}
 \Phi((x,\alpha)*(y,\beta))&=\Phi(y,(1-f)(\alpha)+f(\beta)+d_x-d_y)\\
 &=(\alpha (\alpha^{-1})^{L_e})\beta^{L_e}(L_xL_e^{-1})(L_yL_e^{-1})^{-1}(y)\\
 &=\alpha (\alpha^{-1}\beta)^{L_e} L_x (y)\stackrel{\scriptsize(\ref{Eq:conj})}=\alpha (\alpha^{-1}\beta)^{L_x} L_x (y)\\
 &=\alpha(x*\alpha^{-1}\beta(y))=\alpha(x)*\beta(y)=\Phi(x,\alpha)*\Phi(y,\beta).
\end{align*}

Now, $\Phi(x,\alpha)=\Phi(y,\beta)$, i.e., $\alpha(x)=\beta(y)$, if and only if $\beta^{-1}\alpha(x)=y$, which can only happen if $x,y$ are in the same orbit, and since $x,y$ were chosen from a transversal, it can only happen if $x=y$. 
So, we have $\alpha(x)=\beta(y)$ if and only if $x=y$ is a fixed point of $\beta^{-1}\alpha$. Consequently, if $\dis Q$ is semiregular, the mapping $\Phi$ is one-to-one.

Indecomposability follows from the fact that the orbits in $Q$ are $Qx=\{\alpha(x):\alpha\in\dis Q\}$, and their preimages under $\Phi$ are the fibers of $E$.
\end{proof}

Note that the two lemmas establish the equivalence $(2)\Leftrightarrow(3)$ of Theorem \ref{thm:quasi-affine}.

\begin{example}\label{ex:affine_ext}
Consider an affine quandle $Q=\aff{A,f}$. Then
\begin{itemize}
	\item $Q=\ext{A,f,(0)}$, hence $\dis Q$ is abelian and semiregular according to Lemma \ref{l:ext1}.
	\item $Q\simeq\ext{\im{1-f},f,((1-f)(a):a\in T)}$, where $T$ is a transversal of the orbit decomposition, as follows from Lemma \ref{l:ext2} under the isomorphism $\im{1-f}\simeq\dis Q$. This extension is indecomposable, the fiber $\{a\}\times\im{1-f}$ corresponds to the orbit $Qa$.
\end{itemize}
\end{example}

\begin{example}\label{Ex:4}
There are three quasi-affine quandles of order four, and all of them are affine:
\begin{align*}
\aff{\Z_4,1}\simeq\aff{\Z_2^2,1}&\simeq\ext{\Z_1,1,(0,0,0,0)},\\
\aff{\Z_4,-1}\simeq\aff{\Z_2^2,\left(\begin{smallmatrix} 0&1\\1&0\end{smallmatrix}\right)}&\simeq\ext{\Z_2,1,(0,1)},\\
\aff{\Z_2^2,\left(\begin{smallmatrix} 1&1\\1&0\end{smallmatrix}\right)}&\simeq\ext{\Z_2^2,\left(\begin{smallmatrix} 1&1\\1&0\end{smallmatrix}\right),(0)}.
\end{align*}
Consider a semiregular extension $\ext{A,f,(d_1,\dots,d_k)}$ such that $|A|\cdot k=4$.
If $A=\Z_1$, we have only one option, $\ext{\Z_1,1,(0,0,0,0)}$, which is a projection quandle.
If $A=\Z_2$, we have four options $\ext{\Z_2,1,(d_1,d_2)}$. The cases $(0,0)$ and $(1,1)$ result in the projection quandle, and it is easy to check that both cases $(0,1)$, $(1,0)$ are isomorphic to $\aff{\Z_4,-1}$.
If $A=\Z_2^2$ or $A=\Z_4$, the value of $\bar d=(d)$ is irrelevant, and we obtain one of the three affine quandles.
\end{example}

\begin{example}\label{Ex:6}
There are four quasi-affine quandles of order six (see Proposition \ref{p:enum_pq}):
\begin{align*}
\aff{\Z_6,1}\simeq\, &\ext{\Z_1,1,(0,0,0,0,0,0)},\\ 
&\ext{\Z_2,1,(0,0,1)},\\
&\ext{\Z_3,1,(0,1)},\\
\aff{\Z_6,-1}\simeq\, &\ext{\Z_3,2,(0,0)}.
\end{align*}
The second and third quandles are non-isomorphic, because they have different orbit sizes. They are not affine, since they violate condition (3) of Theorem \ref{thm:affine}. In the second case, $\im{1-f}=\{0\}$ and $\{0\}$ has two representatives in $\bar d$ whereas $\{1\}$ has only one. In the third case, $\im{1-f}=\{0\}$ and $\{2\}$ has no representative in~$\bar d$.
\end{example}

Free medial quandles can also be presented using semiregular extensions. This is essentially proved in \cite[Theorem 3.3]{JPZ}.

\begin{example}\label{ex:free}
Let $X$ be a set, $x_0\in X$ and denote $X^-=X\smallsetminus\{x_0\}$.
Let $A=\bigoplus_{x\in X^-}\Z[t,t^{-1}]$ be the free $\Z[t,t^{-1}]$-module over a free base $(e_x:x\in X^-)$ and put $e_{x_0}=0$.
Then the free medial quandle of rank $|X|$ can be represented as $\ext{A,t,\bar e}$, with the free base $((e_x,0):x\in X)$.
\end{example}

In Theorem \ref{thm:affine}(3), we represent affine quandles using a tuple $\bar d$ which is a \emph{multi}transversal. The following result explains the role of its multiplicity.

\begin{proposition}\label{p:prod}
Let $A$ be an abelian group, $f$ an automorphism of $A$, and $(d_i:i\in I)$ a multitransversal of $A/\im{1-f}$ of multiplicity $k$. 
Let $J\subseteq I$ such that $(d_i:i\in J)$ is a transversal of $A/\im{1-f}$. 
Then $\ext{A,f,(d_i:i\in I)}$ is isomorphic to the direct product \[\ext{A,f,(d_i:i\in J)}\times\mathrm{Proj}(k).\]
\end{proposition}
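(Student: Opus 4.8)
The plan is to construct an explicit isomorphism
\[\Psi:\ext{A,f,(d_i:i\in I)}\to\ext{A,f,(d_i:i\in J)}\times\mathrm{Proj}(k).\]
First I would use the multitransversal hypothesis to organize the index set $I$. Since $(d_j:j\in J)$ is a transversal and $(d_i:i\in I)$ is a multitransversal of multiplicity $k$, every index $i\in I$ determines a unique $j(i)\in J$ with $d_i-d_{j(i)}\in\im{1-f}$, and each fiber $I_j=\{i\in I:j(i)=j\}$ has cardinality $k$. I would fix, for every $j$, a bijection of $I_j$ onto the underlying set $K$ of $\mathrm{Proj}(k)$; writing $p(i)$ for the image of $i$, the assignment $i\mapsto(j(i),p(i))$ is then a bijection $I\to J\times K$. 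For each $i$ I would also choose an element $c_i\in A$ with $(1-f)(c_i)=d_i-d_{j(i)}$; this is possible by the defining property of $j(i)$, and any choice will do.

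With these data fixed, I would define
\[\Psi(i,a)=\bigl((j(i),\,a+c_i),\,p(i)\bigr).\]
Bijectivity is immediate: $i\mapsto(j(i),p(i))$ is a bijection, and for each $i$ the shift $a\mapsto a+c_i$ is a bijection of $A$, so $\Psi$ is invertible coordinate by coordinate.

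The substance of the proof is checking that $\Psi$ is a homomorphism. Expanding both sides of $\Psi((i,a)*(i',a'))=\Psi(i,a)*\Psi(i',a')$ using the componentwise product in $\ext{A,f,(d_j:j\in J)}\times\mathrm{Proj}(k)$ together with the extension formula of Definition \ref{def:ext}, the $\mathrm{Proj}$-coordinate matches trivially (it is $p(i')$ on both sides), and the first coordinate reduces, after cancelling the common $(1-f)(a)+f(a')$ term, to the identity
\[d_i-d_{i'}+c_{i'}=(1-f)(c_i)+f(c_{i'})+d_{j(i)}-d_{j(i')}.\]
This is where the choice of the $c_i$ does the work: substituting $d_i=d_{j(i)}+(1-f)(c_i)$ and $d_{i'}=d_{j(i')}+(1-f)(c_{i'})$ and using $(1-f)(c_{i'})=c_{i'}-f(c_{i'})$ collapses both sides to $d_{j(i)}-d_{j(i')}+(1-f)(c_i)+f(c_{i'})$, so the identity holds.

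I expect the only real obstacle to be organizing this verification cleanly; there is no conceptual difficulty, since the summand $d_i$ in $\ext{A,f,(d_i:i\in I)}$ is absorbed by the shift $a\mapsto a+c_i$ precisely because translating $\bar d$ within a coset of $\im{1-f}$ produces isomorphic extensions. The multiplicity $k$ records exactly how many indices collapse onto each representative in $J$, and it is this redundancy that materializes as the factor $\mathrm{Proj}(k)$.
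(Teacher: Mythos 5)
Your proof is correct and follows essentially the same route as the paper: both organize $I$ as $J\times K$ via the coset structure, choose witnesses $c$ with $(1-f)(c)=d_i-d_{j(i)}$, and use the shift $a\mapsto a+c$ to absorb the cocycle difference; your $\Psi$ is simply the inverse of the paper's map $\Phi$, which goes from the product to the extension. The homomorphism identity you isolate and verify is the same cancellation the paper carries out in its displayed computation.
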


\begin{proof}
Let $K$ be a set of size $k$ and let $\xi$ be a bijection $J\times K\to I$ such that $d_i-d_{\xi(i,u)}\in\im{1-f}$, for every $i\in J$ and $u\in K$ (this is possible, because each block of $\im{1-f}$ contains the same number of representatives in $\bar d$). Choose witnesses $c_{i,u}\in A$ such that $d_i-d_{\xi(i,u)}=(1-f)(c_{i,u})$. Consider the mapping 
\begin{align*}
\Phi:\ext{A,f,(d_i:i\in J)}\times\mathrm{Proj}(k)&\to\ext{A,f,(d_i:i\in I)},\\ ((i,a),u)&\mapsto(\xi(i,u),\ a+c_{i,u}).
\end{align*}
This is clearly a bijection, and a straightforward calculation shows that it is an isomorphism:
\begin{align*}
\Phi((i,a),u)*&\Phi((j,b),v) 
= (\xi(i,u),\ a+c_{i,u})) * (\xi(j,v), b+c_{j,v}) \\
&= (\xi(j,v),\ (1-f)(a+c_{i,u})+f(b+c_{j,v})+d_{\xi(i,u)}-d_{\xi(j,v)}) \\
&= (\xi(j,v),\ (1-f)(a)+f(b)+ (1-f)(c_{i,u})+c_{j,v}-(1-f)(c_{j,v})+ d_{\xi(i,u)}-d_{\xi(j,v)}) \\
&= (\xi(j,v),\ (1-f)(a)+f(b)+ d_i-d_{\xi(i,u)}+c_{j,v}-d_j+d_{\xi(j,v)}+ d_{\xi(i,u)}-d_{\xi(j,v)}) \\
&= (\xi(j,v),\ (1-f)(a)+f(b)+ d_i-d_j+c_{j,v}) \\
&= \Phi((j,(1-f)(a)+f(b)+d_i-d_j),v)=\Phi(((i,a),u)*((j,b),v)).
\end{align*}
\end{proof}

As a consequence, we obtain an interesting decomposition theorem. It is related to \cite[Theorem 5.5]{JPSZ} which states a similar result covering all medial quandles: every medial quandle where all orbits are latin, is a direct product of an affine quandle and a projection quandle.

\begin{corollary}\label{cor:latin_orbits}
Let $A$ be an abelian group and $f$ its automorphism such that $1-f$ is onto. Then, for any tuple $\bar d$, the extension $\ext{A,f,\bar d}$ is isomorphic to 
the direct product of the affine quandle $\aff{A,f}$ and a projection quandle.
\end{corollary}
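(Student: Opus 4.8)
The plan is to derive Corollary~\ref{cor:latin_orbits} directly from Proposition~\ref{p:prod}. The hypothesis that $1-f$ is onto means that $\im{1-f}=A$, so the quotient $A/\im{1-f}$ is trivial: it consists of a single block, namely all of $A$. First I would observe what it means for a tuple $\bar d=(d_i:i\in I)$ to be a multitransversal of $A/\im{1-f}$ in this degenerate situation. Since there is only one block, the multitransversal condition $|T\cap B_1|=|T\cap B_2|$ is vacuous, and \emph{every} tuple $\bar d$ is automatically a multitransversal; its multiplicity $k$ is simply the total number of elements, i.e.\ $k=|I|$, since all $|I|$ entries land in the one and only block.

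Next I would set up the index set $J$ required by Proposition~\ref{p:prod}. A transversal of $A/\im{1-f}$ picks exactly one representative from the unique block, so I take $J=\{i_0\}$ to be any single-element subset of $I$. Then $(d_i:i\in J)=(d_{i_0})$ is a one-element tuple, and $\ext{A,f,(d_{i_0})}$ is an extension over a single fiber. Applying Proposition~\ref{p:prod} with this $J$ and with $k=|I|$ immediately yields
\[
\ext{A,f,\bar d}\simeq\ext{A,f,(d_{i_0})}\times\mathrm{Proj}(|I|).
\]
It then remains to identify $\ext{A,f,(d_{i_0})}$ with the affine quandle $\aff{A,f}$. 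This is the single-fiber case of Definition~\ref{def:ext}: with $I=\{i_0\}$ a singleton, the correction terms $d_{i_0}-d_{i_0}=0$ vanish in the operation, so the formula $(i_0,a)*(i_0,b)=(i_0,(1-f)(a)+f(b))$ reduces, after dropping the redundant first coordinate, to the affine operation $a*b=(1-f)(a)+f(b)$. Equivalently, one can invoke the first bullet of Example~\ref{ex:affine_ext}, which records exactly that $\aff{A,f}=\ext{A,f,(0)}$, together with the easy observation that any single-entry extension $\ext{A,f,(d_{i_0})}$ is isomorphic to $\ext{A,f,(0)}$ via the translation $(i_0,a)\mapsto(i_0,a)$ (indeed a single $d_{i_0}$ only shifts by the constant $0$ within its own fiber).

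I do not anticipate a genuine obstacle here, since the statement is essentially a corollary whose content is entirely absorbed by Proposition~\ref{p:prod}; the only point requiring care is the bookkeeping about the trivial quotient. The main thing to get right is the observation that onto-ness of $1-f$ forces $A/\im{1-f}$ to be a single block, which is what makes \emph{arbitrary} tuples qualify as multitransversals and identifies the multiplicity with $|I|$. Everything else is a direct substitution into the proposition followed by the routine identification of a one-fiber extension with the affine quandle.
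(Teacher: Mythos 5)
Your proof is correct and follows essentially the same route as the paper: observe that onto-ness of $1-f$ makes $A/\im{1-f}$ a single block so that any $\bar d$ is a multitransversal, apply Proposition~\ref{p:prod} with a singleton $J$, and identify the one-fiber extension with $\aff{A,f}$. The paper's own proof is just a terser version of exactly this argument.
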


\begin{proof}
Since $\im{1-f}=A$, there is only one coset of $\im{1-f}$, hence $\bar{d}$ is a multitransversal of~$A/A$, and Proposition \ref{p:prod} applies. Clearly, $\ext{A,f,(d_1)}=\aff{A,f}$.
\end{proof}

\begin{remark}\label{rem:central}
The quandle $\ext{A,f,(d_i:i\in I)}$ is a \emph{central extension} of the projection quandle $(I,*)$ over the affine quandle $\aff{A,f}$, with the cocycle $\theta_{i,j}=d_i-d_j$. Here we mean central extensions in the sense of \cite[Chapter 7]{FM}. An ongoing project \cite{BS} aims at adapting the general theory of abelian and central extensions of \cite{FM} to quandles. Other definitions of abelian extensions of quandles exist in literature. For example, $\ext{A,f,(d_i:i\in I)}$ is a special kind of abelian extension in the sense of \cite{Ja}, but not in the sense of \cite{CENS,CSV}, where abelian extensions are defined as a special kind of coverings, restricting constant cocycles to abelian groups.
\end{remark}

\section{A universal algebraic characterization}\label{sec:ua}

In universal algebra \cite{Ber}, an algebraic structure $A$ is called \emph{abelian} if for every $(k+1)$-ary term operation $t$ and every $a,b,u_1,\dots,u_k,v_1,\dots,v_k\in A$, the following implication holds:
\[ t(a,u_1,\dots,u_k)=t(a,v_1,\dots,v_k)\ \Rightarrow\ t(b,u_1,\dots,u_k)=t(b,v_1,\dots,v_k)\]
The condition may look ad hoc, but it has a natural meaning: It is not hard to prove that an algebra $A$ is abelian if and only if, in the direct power $A^2$, the diagonal $\{(a,a):a\in A\}$ is a block of a congruence \cite[Theorem 7.30]{Ber}. Consequently, a group $G$ is abelian in the present sense (i.e., the diagonal is a normal subgroup of $G^2$) if and only if $G$ is abelian in the usual sense. A ring is abelian if and only if it is a zero ring ($xy=0$ for every $x,y$).

It turns out that this syntactic condition is closely related to representability of general algebraic structures by modules (see \cite{SS,Sz} for a detailed account).
In one direction, consider a module $M$ over a ring $R$. Every term operation $t(x,x_1,\dots,x_k)$ can be written as $rx+\sum_{i=1}^k r_ix_i$ for some $r,r_i\in R$.
Then $t(a,u_1,\dots,u_k)=t(a,v_1,\dots,v_k)$ implies that $\sum_{i=1}^k r_iu_i=\sum_{i=1}^k r_iv_i$, and thus we have
$t(b,u_1,\dots,u_k)=t(b,v_1,\dots,v_k)$ for every $b\in M$. Hence every module is abelian. The same argument shows that every algebraic structure defined by term operations over a module is abelian, too. And indeed, a subalgebra of an abelian algebra is also abelian. We just proved that every quasi-affine algebraic structure is abelian.

The converse implication is more complicated. It holds in many particular cases \cite{SS,Sz}, but not in general \cite{Qu}. As the first step towards establishing that abelian quandles are quasi-affine, we prove that they satisfy condition (2) of Theorem \ref{thm:quasi-affine}.

\begin{lemma}\label{l:semireg-abelian}
Let $Q$ be an abelian quandle. Then $\dis Q$ is a semiregular abelian group.
\end{lemma}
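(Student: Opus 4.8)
The plan is to prove the two conclusions—abelianness of $\dis Q$ and its semiregularity—separately, extracting each from the universal-algebraic abelianness hypothesis by choosing suitable term operations $t$ and suitable elements to plug into the defining implication.

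First I would establish that $\dis Q$ is abelian, which by the discussion in Section \ref{sec:terminology} is equivalent to showing that $Q$ is medial, i.e. that it satisfies $(x*y)*(u*v)=(x*u)*(y*v)$. The natural approach is to realize mediality as an instance of the abelianness implication. I would consider the term $t(x_0,x_1,x_2,x_3)=(x_0*x_1)*(x_2*x_3)$ together with a companion term, and check that the failure of mediality would produce a pair of tuples agreeing on one coordinate but disagreeing after that coordinate is changed, contradicting abelianness. Here I expect to exploit idempotence heavily: plugging equal values into a distinguished coordinate collapses $*$ to the identity on that argument, so that the term $t$ with a fixed first entry becomes a simpler composite of translations. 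The key is to engineer term values so that the hypothesis $t(a,\bar u)=t(a,\bar v)$ holds trivially (by idempotence, for a well-chosen $a$), while the conclusion $t(b,\bar u)=t(b,\bar v)$ forces exactly the medial identity.

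Next I would prove semiregularity: that no nontrivial element of $\dis Q$ fixes a point. Suppose $\alpha\in\dis Q$ satisfies $\alpha(e)=e$ for some $e\in Q$; I want to conclude $\alpha=1$, i.e. $\alpha(x)=x$ for all $x$. Since $\alpha$ is a product of translations $L_aL_b^{-1}$, I would look for a term operation $t$ that evaluates to $\alpha(x)$ (or to a translate expressible through $\alpha$) and use the abelianness implication to transport the fixed-point equation $\alpha(e)=e$ from the base point $e$ to an arbitrary point $x$. Concretely, the idea is to write the action of the generators via terms in the quandle operation, pick $t$ so that the hypothesis encodes "$\alpha$ fixes $e$'' and the implication then yields "$\alpha$ fixes every $b$.'' Mediality, already established, should let me commute the relevant translations (via equation \eqref{Eq:conj}) so that the single fixed point propagates to all of $Q$.

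The main obstacle I anticipate is the second part: semiregularity is a statement about \emph{all} of $\dis Q$, whose elements are arbitrary words $L_{a_1}L_{b_1}^{-1}\cdots L_{a_n}L_{b_n}^{-1}$, whereas the abelianness implication is most directly applicable to a single term operation of bounded arity. The difficulty is to capture an arbitrary displacement $\alpha$, and its fixed point, inside one term. I would handle this by first reducing to generators or short products using mediality and the commuting relation \eqref{Eq:conj}, which normalizes how translations interact, and then set up the abelianness implication for that normalized form; verifying that an arbitrary $\alpha$ with a fixed point is forced to be the identity is where the real care is needed, and I expect the argument to lean on the fact (already in hand) that $\dis Q$ is abelian, so that fixing one point and transporting via the implication genuinely yields a global fixed-point-free conclusion.
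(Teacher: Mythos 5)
Your overall strategy is the same as the paper's, and the first half is essentially identical: the paper takes $t(x,y,u,v)=(x*y)*(u*v)$, observes via idempotence and left distributivity that $t(a,a,b,c)=(a*b)*(a*c)=t(a,b,a,c)$, and then applies abelianness in the first coordinate to get $(d*a)*(b*c)=(d*b)*(a*c)$, i.e.\ mediality. For semiregularity your plan is also the paper's, but the obstacle you anticipate is illusory, and the workaround you sketch would be a wrong turn. Abelianness is quantified over \emph{all} term operations of \emph{all} arities, so there is no need to reduce an arbitrary $\alpha=L_{a_1}L_{b_1}^{-1}\cdots L_{a_n}L_{b_n}^{-1}$ to generators or short products (and such a reduction would be dangerous anyway: a product fixing a point says nothing about its factors fixing points). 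Instead, since left division is a basic operation, $L_{x_1}L_{y_1}^{-1}\cdots L_{x_n}L_{y_n}^{-1}(z)$ is literally a $(2n+1)$-ary term $t(z,x_1,\dots,x_n,y_1,\dots,y_n)$; the one ingredient your sketch leaves unspecified is the comparison tuple, which should be the \emph{constant} tuple $(c,\dots,c)$, giving $t(c,a_1,\dots,b_n)=\alpha(c)=c=t(c,c,\dots,c)$ by idempotence, whence $\alpha(d)=t(d,a_1,\dots,b_n)=t(d,c,\dots,c)=d$ for all $d$. Note also that mediality and equation \eqref{Eq:conj} play no role in this step, contrary to what you expect.
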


\begin{proof}
First we show that $Q$ is medial, and thus $\dis Q$ is abelian. Let $t(x,y,u,v)=(x*y)*(u*v)$ and consider any $a,b,c,d\in Q$. Using idempotence and left distributivity,
\[t(a,a,b,c)=(a*a)*(b*c)=(a*b)*(a*c)=t(a,b,a,c).\]
Using abelianness, we obtain
\[(d*a)*(b*c)=t(d,a,b,c)=t(d,b,a,c)=(d*b)*(a*c).\]

For semiregularity, consider $\alpha=L_{a_1}L^{-1}_{b_1}\ldots L_{a_n}L^{-1}_{b_n}\in\Dis(Q)$ such that $\alpha(c)=c$ for some $c\in Q$. We shall prove that $\alpha$ is the identity mapping. Let $t(z,x_1,\dots,x_n,y_1,\dots,y_n)$ be the term that represents the formal expression $L_{x_1}L^{-1}_{y_1}\ldots L_{x_n}L^{-1}_{y_n}(z)$.
Then
\[t(c,a_1,\dots,a_n,b_1,\dots,b_n)=\alpha(c)=c=L_{c}L^{-1}_{c}\ldots L_{c}L^{-1}_{c}(c)=t(c,c,\dots,c,c,\dots,c).\]
Using abelianness, we obtain
\[\alpha(d)=t(d,a_1,\dots,a_n,b_1,\dots,b_n)=t(d,c,\dots,c,c,\dots,c)=L_{c}L^{-1}_{c}\ldots L_{c}L^{-1}_{c}(d)=d\]
for every $d\in Q$.
\end{proof}

\begin{remark}
We just proved that abelian quandles are medial. In the next section, we will prove that abelian quandles are quasi-affine. 
A different but related affine representation result was established by Kearnes in \cite[Theorem 1.5]{K}: 
\emph{Every medial quandle $Q$ admits a strongly abelian congruence $\theta$ such that $Q/\theta$ is quasi-affine.}
It is not difficult to prove that, in quandles, strongly abelian congruences are precisely those below the kernel of the Cayley mapping $x\mapsto L_x$
(see \cite{BS}).
\end{remark}

\section{Characterization theorems}\label{sec:proofs}

In the present section, we prove the main results. We start with the characterization theorem for affine quandles.

\begin{proof}[Proof of Theorem \ref{thm:affine}]

$(1)\Rightarrow(2)$.
Let $Q=\aff{A,f}$. Recall that 
\[\dis Q=\{x\mapsto a+x:\ a\in\im{1-f}\}.\]
It immediately follows that $\dis Q$ is abelian and semiregular.
Next we prove that 
\[[\dis Q,L_e]=\{x\mapsto b+x:\ b\in\im{(1-f)^2}\}.\]
For $\alpha\in\dis Q$, $\alpha(x)=a+x$, we have
\[ [\alpha,L_e](x)=a+(1-f)(e)+f(-a+(1-f^{-1})(e)+f^{-1}(x))=(1-f)(a)+x.\]
Indeed, $(1-f)(a)\in\im{(1-f)^2}$, and every element of $\im{(1-f)^2}$ can be written this way.

Now we finish the proof of (2). Fix $e\in\dis Q$ and a transversal $T$ of the orbit decomposition, that is, a transversal of $A/\im{1-f}$. We shall prove that $\{\{L_aL_e^{-1}:a\in T\}\}$ is a multitransversal of $\dis Q/[\dis Q,L_e]$. 
Since $L_aL_e^{-1}(x)=(1-f)(a-e)+x$, this is equivalent to the fact that
$\{\{(1-f)(a-e):a\in T\}\}$ is a multitransversal of $\im{1-f}/\im{(1-f)^2}$. Now apply Lemma \ref{l:mtr2} to $G=A$, $\varphi=1-f$ and the transversal $\{a-e:a\in T\}$ of $A/\im{1-f}$.

$(2)\Rightarrow(2')$ is trivial.

$(2')\Rightarrow(3)$.
Let $A=\dis Q$, $f(\alpha)=\alpha^{L_e}$ and put $d_x=L_xL_e^{-1}$, $x\in T$. Observe that $[\dis Q,L_e]=\im{1-f}$, because
\[ [\alpha,L_e]=\alpha L_e\alpha^{-1} L_e^{-1}=\alpha f(\alpha)^{-1}=(1-f)(\alpha).\]
According to Lemma \ref{l:ext2}, $Q$ is isomorphic to the extension $\ext{\dis Q,f,(L_xL_e^{-1}:x\in T)}$. By assumptions, $\bar d$ is a multitransversal of $\dis Q/[\dis Q,L_e]=A/\im{1-f}$.

$(3)\Rightarrow(1)$.
According to Proposition~\ref{p:prod}, $Q$ is a product of a projection quandle (which is affine) and an extension $\ext{A,f,\bar e}$ where~$\bar e$ is a transversal. Since the product of affine quandles is affine, it remains to prove that the implication holds assuming that $\bar d$ is a transversal.

According to the Hou-\v S\v tov\'\i\v cek Lemma \ref{l:hs} for $\varphi=1-f$, there is an abelian group $E\geq A$ and an epimorphism $\psi:E\to A$ such that $\psi|_A=1-f$ and $\psi/A:E/A\simeq A/\im{1-f}$. Let $g=1-\psi$. First we prove that $g\in\aut E$. Indeed, it is an endomorphism.
Given $y\in E$, we will find all $x\in E$ such that $g(x)=(1-\psi)(x)=y$, that is, such that $\psi(x)=x-y$. Since $\im\psi=A$, we must have $x-y\in A$, and thus we can assume that $x=y+a$ for some $a\in A$. Now, on one hand, we have
\[\psi(y+a)=\psi(x)=x-y=y+a-y=a,\]
and on the other hand, we have
\[\psi(y+a)=\psi(y)+\psi(a)=\psi(y)+a-f(a),\]
because $\psi|_A=1-f$. Putting together, $x=y+a$ is a solution to the equation $g(x)=y$ if and only if $\psi(y)=f(a)$, that is, if and only if $a=f^{-1}\psi(y)$. Therefore, the equation has a unique solution, and thus $g$ is bijective.

Consider a transversal $(e_i)_{i\in I}$ of $E/A$ such that $\psi(e_i)=d_{i}$. 
Define a mapping
\[\Phi:I\times A\to  E,\qquad (i,a)\mapsto e_i+a.\]
We prove that this is a quandle isomorphism $\ext{A,f,\bar d}\simeq\aff{E,g}$.
It is indeed bijective: given $u\in E$, there is a unique decomposition $u=e_i+a$ where $e_i$ is the representative of the coset such that $u\in e_i+A$, and thus $(i,a)$ is the unique preimage. To show that $\Phi$ is a homomorphism, we calculate
\begin{align*}
\Phi(i,a)*\Phi(j,b)&=(e_i+a)*(e_j+b)\\
&=(1-g)(e_i+a)+g(e_j+b)\\
&=(1-g)(a)+g(b)+(1-g)(e_i)-(1-g)(e_j)+e_j\\
&=(1-f)(a)+f(b)+\psi(e_i)-\psi(e_j)+e_j\\
&=(1-f)(a)+f(b)+d_i-d_j+e_j\\
&=\Phi(j,(1-f)(a)+f(b)+d_i-d_j)\\
&=\Phi((i,a)*(j,b))
\end{align*}
for every $i,j\in I$ and $a,b\in A$.
\end{proof}

Now, using Theorem \ref{thm:affine}, we can prove the characterization theorem for quasi-affine quandles.

\begin{proof}[Proof of Theorem \ref{thm:quasi-affine}]

$(1)\Rightarrow(2)$.
Let $Q$ be a subquandle of $\aff{A,f}=\ext{A,f,(0)}$. According to Lemma \ref{l:ext1}, $G=\dis{\aff{A,f}}$ is an abelian semiregular permutation group. The group $\dis Q$ is a subgroup of permutations from $G$ restricted to the subset $Q\subseteq A$, hence it is also abelian and semiregular.

$(2)\Rightarrow(3)$ was proved in Lemma \ref{l:ext2}.

$(3)\Rightarrow(1)$.
Assume that $Q=\ext{A,f,\bar d}$ where $\bar d=(d_i:i\in I)$. Extend the set $I$ and the tuple $\bar d$ into a set $J\supseteq I$ and a tuple $\bar e=(e_j:j\in J)$ such that $\bar e$ is a multitransversal of $A/\im{1-f}$ and $e_i=d_i$ for every $i\in I$. 
Then $Q=\ext{A,f,\bar d}$ is a subquandle of $\ext{A,f,\bar e}$, which is affine according to Theorem~\ref{thm:affine}.

(Extending the tuple $\bar d$ is indeed possible: from each coset $x+\im{1-f}$, add sufficiently many elements so that all cosets have the same number of representatives. Here is a formal description.
Choose a transversal $T$ of $A/\im{1-f}$. For $x\in T$, let $n_x=|\bar d\cap(x+\im{1-f})|$ and put $n=\sup\{n_x:x\in T\}$.
Let $J=I\cup\bigcup_{x\in T} J_x$, where $J_x$ are pairwise disjoint sets, disjoint with $I$, such that $|J_x|+n_x=n$.
Define $e_i=d_i$ for every $i\in I$, and for every $j\in J_x$, choose $e_j\in x+\im{1-f}$ arbitrarily.)

$(1)\Rightarrow(4)$ holds for all algebraic structures, see Section \ref{sec:ua}.

$(4)\Rightarrow(2)$ was proved in Lemma \ref{l:semireg-abelian}.
\end{proof}

\begin{corollary}
Let $Q$ be a quasi-affine quandle. Then there exists an abelian group $A$ and its automorphism $f$ such that $Q$ embeds into $\aff{A,f}$ and $|A|\leq|Q|\cdot|\dis Q|\leq|Q|^2$.
\end{corollary}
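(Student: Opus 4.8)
The plan is to rerun the chain $(2)\Rightarrow(3)\Rightarrow(1)$ from the two characterization theorems, this time keeping track of the cardinalities of every group that appears, and to read off the bound from that bookkeeping. By Theorem~\ref{thm:quasi-affine}, quasi-affineness already gives that $\dis Q$ is abelian and semiregular, and semiregularity is what yields the second inequality for free: since $\dis Q$ acts freely on $Q$, every orbit has exactly $|\dis Q|$ elements, so $|\dis Q|$ divides $|Q|$ and in particular $|\dis Q|\le|Q|$, whence $|Q|\cdot|\dis Q|\le|Q|^2$. The same count pins down the number of orbits: if $T$ is a transversal of the orbit decomposition, then $|T|=|Q|/|\dis Q|$.

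To get the group $A$ and the bound $|A|\le|Q|\cdot|\dis Q|$, I would follow the proofs already given. By Lemma~\ref{l:ext2}, $Q\simeq\ext{\dis Q,f,(L_xL_e^{-1}:x\in T)}$, so I set the base group to $\dis Q$ and $f(\alpha)=\alpha^{L_e}$, recalling that $\im{1-f}=[\dis Q,L_e]$. As in the proof of $(3)\Rightarrow(1)$ of Theorem~\ref{thm:quasi-affine}, I extend the tuple $\bar d=(L_xL_e^{-1}:x\in T)$ to a multitransversal $\bar e$ of $\dis Q/\im{1-f}$ of minimal multiplicity $n=\max_x n_x$, where $n_x$ is the number of entries of $\bar d$ in the coset $x$; then $Q$ embeds into the affine quandle $\ext{\dis Q,f,\bar e}$. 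Proposition~\ref{p:prod} splits this as $\ext{\dis Q,f,\bar e'}\times\mathrm{Proj}(n)$ with $\bar e'$ a transversal, and the Hou-\v S\v tov\'\i\v cek construction from the proof of $(3)\Rightarrow(1)$ of Theorem~\ref{thm:affine} (Lemma~\ref{l:hs} with $\varphi=1-f$) identifies $\ext{\dis Q,f,\bar e'}\simeq\aff{E,g}$ for an abelian group $E\ge\dis Q$ with $E/{\dis Q}\simeq\dis Q/\im{1-f}$. Writing $\mathrm{Proj}(n)=\aff{B,1}$ for any abelian group $B$ of order $n$, the target becomes $A=E\times B$ with automorphism $g\times 1$, and $Q$ embeds into $\aff{A,g\times 1}$.

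It then remains to estimate $|A|=|E|\cdot n=|\dis Q|\cdot|\dis Q/\im{1-f}|\cdot n$. Put $c=|\dis Q/\im{1-f}|$. Since $\dis Q/\im{1-f}$ is a quotient of $\dis Q$ we have $c\le|\dis Q|$, and since the multiplicities $n_x$ sum to the total number of entries $|T|$ of $\bar d$, we have $n\le|T|=|Q|/|\dis Q|$. Multiplying, $c\cdot n\le|\dis Q|\cdot|Q|/|\dis Q|=|Q|$, and therefore $|A|=|\dis Q|\cdot(c\cdot n)\le|\dis Q|\cdot|Q|$, which is the first inequality; combined with $|\dis Q|\le|Q|$ from the first paragraph it gives $|A|\le|Q|^2$.

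I do not anticipate a real obstacle, since every ingredient is already established; the work is purely the cardinality bookkeeping. The two points that must be handled carefully are the identification $\im{1-f}=[\dis Q,L_e]$ (so that the coset count $c$ is genuinely the order of a quotient of $\dis Q$, hence $\le|\dis Q|$) and the bound on the minimal multiplicity $n$ by the number of orbits. The one caveat is the implicit finiteness: for infinite $Q$ the whole argument should be read as an inequality of cardinals, with $\mathrm{Proj}(n)$ realized over any abelian group of the appropriate cardinality.
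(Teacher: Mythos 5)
Your proposal is correct and follows essentially the same route as the paper: represent $Q$ as $\ext{\dis Q,f,(L_xL_e^{-1}:x\in T)}$ via Lemma~\ref{l:ext2}, note $|Q|=|\dis Q|\cdot|T|$ from semiregularity, and bound the index set of the extended multitransversal $\bar e$ by $n\cdot|\dis Q/\im{1-f}|\le|T|\cdot|\dis Q|=|Q|$. The only (harmless) difference is that you go on to realize the ambient affine quandle explicitly as $\aff{E\times B,\,g\times 1}$ via Proposition~\ref{p:prod} and the Hou--\v S\v tov\'\i\v cek lemma, whereas the paper simply observes that the affine quandle $\ext{\dis Q,f,\bar e}$ has at most $|Q|\cdot|\dis Q|$ elements and hence so does its underlying group.
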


\begin{proof}
According to Lemma \ref{l:ext2}, $Q\simeq\ext{D,g,\bar d}$ such that $D=\dis Q$ and $\bar d$ is indexed by $T$, a set of orbit representatives. In particular, $|Q|=|\dis Q\times T|=|\dis Q|\cdot|T|$. Using the construction from the proof of Theorem \ref{thm:quasi-affine}, $(3)\Rightarrow(1)$, $Q$ embeds into an affine quandle $R=\ext{D,g,\bar e}$ where $\bar e$ is indexed by a set not larger than $|T|\cdot |D/\im{1-f}|\leq |T|\cdot|\dis Q|=|Q|$. Therefore, $|R|\leq|\dis Q|\cdot|Q|\leq|Q|^2$.
\end{proof}

For \emph{finite} quandles, the balancedness conditions of Theorem \ref{thm:affine} can be formulated alternatively, perhaps more esthetically. In any affine quandle, 
every column of the multiplication table contains the same number of occurrences of each entry. For finite quasi-affine quandles, the converse holds, too.

Let $m_{x,y}$ denote the number of occurrences of $y$ in the column of $x$ in the multiplication table of~$Q$; formally,
\[m_{x,y}=|\{z\in Q:\ z*x=y\}|.\]
Indeed $m_{x,y}=0$ if $y\not\in Qx$.

\begin{prop}\label{p:occurrences}
If $Q$ is an affine quandle, then $m_{x,y_1}=m_{x,y_2}$ for every $x\in Q$ and every $y_1,y_2\in Qx$.
\end{prop}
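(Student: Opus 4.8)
The plan is to work directly with an affine representation. Since $Q$ is affine, fix an abelian group $A$ and an automorphism $f$ with $Q=\aff{A,f}$, so that the operation is $z*x=(1-f)(z)+f(x)$. Under this representation, the quantity $m_{x,y}$ counts the solutions $z\in A$ of the equation
\[ (1-f)(z)=y-f(x). \]
First I would observe that the solution set of such a linear equation is either empty or a coset of $\Ker{1-f}$; in particular, whenever it is nonempty, its cardinality equals $|\Ker{1-f}|$, a value independent of $y$.

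It remains to determine exactly when the equation is solvable and to match this with membership in the orbit $Qx$. The equation has a solution precisely when $y-f(x)\in\im{1-f}$. Writing $y-f(x)=(y-x)+(1-f)(x)$ and using $(1-f)(x)\in\im{1-f}$, solvability is equivalent to $y-x\in\im{1-f}$. Since the orbits of an affine quandle are exactly the cosets of $\im{1-f}$ (as recorded in Section~\ref{sec:terminology}), this says precisely that $y\in Qx$.

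Putting the two steps together yields $m_{x,y}=|\Ker{1-f}|$ for every $y\in Qx$ (and $m_{x,y}=0$ otherwise); in particular $m_{x,y_1}=m_{x,y_2}$ for all $y_1,y_2\in Qx$, as claimed. There is no serious obstacle here: the statement becomes essentially immediate once the counting is reduced to solving a single linear equation over $A$. The only point requiring (routine) care is the identification of the solvability condition $y-f(x)\in\im{1-f}$ with the orbit condition $y\in Qx$, which follows from the coset description of orbits quoted above.
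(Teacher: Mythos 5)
Your proof is correct and follows essentially the same route as the paper: both reduce $m_{x,y}$ to counting solutions of the linear equation $(1-f)(z)=y-f(x)$ and observe that for $y\in Qx$ this count is $|\Ker{1-f}|$, independent of $y$. The only cosmetic difference is that the paper substitutes $y=(1-f)(u)+x$ directly, whereas you phrase the same fact as a solvability condition matched against the coset description of orbits.
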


\begin{proof}
Let $Q=\aff{A,f}$. Then $m_{x,y}=|\{z\in Q:\ (1-f)(z)=y-f(x)\}|$. Assuming that $y\in Qx$, there is $u\in A$ such that $y=(1-f)(u)+x$, and thus 
\[m_{x,y}=|\{z\in Q:\ (1-f)(z)=(1-f)(x+u)\}|=|\Ker{1-f}|,\] 
because $(1-f)(a)=(1-f)(a')$ if and only if $a-a'\in\Ker{1-f}$. We see that $m_{x,y}$ is independent of $y$.
\end{proof}

\begin{theorem}\label{thm:affine_fin}
The following statements are equivalent for a finite quasi-affine quandle $Q$: 
\begin{enumerate}
\item $Q$ is affine;
\item for every $x\in Q$ and every $y_1,y_2\in Qx$, $m_{x,y_1}=m_{x,y_2}$;
\item there exists $x\in Q$ such that for every $y_1,y_2\in Qx$, $m_{x,y_1}=m_{x,y_2}$.
\end{enumerate}
\end{theorem}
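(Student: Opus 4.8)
The plan is to exploit the characterization of affine quandles already provided by Theorem \ref{thm:affine}, specifically the balancedness condition, and to show that for finite quasi-affine quandles the quantities $m_{x,y}$ are exactly the numbers that the balancedness condition controls. Proposition \ref{p:occurrences} already gives the implication $(1)\Rightarrow(2)$, and $(2)\Rightarrow(3)$ is trivial, so the entire content lies in proving $(3)\Rightarrow(1)$. I would argue the contrapositive flavour directly: assuming that $Q$ is quasi-affine (so $\dis Q$ is abelian and semiregular by Theorem \ref{thm:quasi-affine}) and that the ``equal occurrences'' condition (3) holds for some $x\in Q$, I would derive the balancedness condition (2') of Theorem \ref{thm:affine}, which then yields affineness.

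First I would fix $e\in Q$ witnessing condition (3) and a transversal $T$ of the orbit decomposition, and translate the combinatorial count $m_{e,y}$ into a statement about the displacement group. Since $\dis Q$ is semiregular, each nonidentity translation acts freely, so for $y\in Qe$ the set $\{z\in Q:z*e=y\}=\{z:L_z L_e^{-1}(e)=y\}$ is in bijection with the set of $z$ for which the displacement $L_zL_e^{-1}$ sends $e$ to $y$. By Lemma \ref{l:ext2} we may present $Q$ as $\ext{\dis Q,f,(L_xL_e^{-1}:x\in T)}$ with $f(\alpha)=\alpha^{L_e}$ and $[\dis Q,L_e]=\im{1-f}$ (computed exactly as in the proof of $(2')\Rightarrow(3)$ of Theorem \ref{thm:affine}). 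In this representation the fibre over each orbit is a copy of $\dis Q$, and a direct computation of $z*e$ shows that $m_{e,y}$ equals the number of indices $x\in T$ whose displacement $d_x=L_xL_e^{-1}$ lands in a fixed coset of $\im{1-f}$, multiplied by the constant fibre factor coming from semiregularity. The key observation is that condition (3) says precisely that this count is independent of which coset we pick — i.e., that $\{\{L_xL_e^{-1}:x\in T\}\}$ meets every coset of $[\dis Q,L_e]=\im{1-f}$ the same number of times.

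That last statement is exactly balancedness of $\{\{L_aL_e^{-1}:a\in T\}\}$ as a multitransversal of $\dis Q/[\dis Q,L_e]$, which is condition (2') of Theorem \ref{thm:affine}; invoking that theorem gives that $Q$ is affine and completes $(3)\Rightarrow(1)$. The main obstacle I anticipate is the bookkeeping in the middle step: I must check carefully that for $y\in Qe$ the occurrence count $m_{e,y}$ factors cleanly as (number of transversal elements in the relevant coset) times (a constant equal to $|\dis Q|/|\text{orbit}|$ or, equivalently, $|\Ker{1-f}|$ in the affine picture), and that finiteness is genuinely used to pass from ``equal $m_{e,y}$ for all $y\in Qe$'' to ``equal coset multiplicities''. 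Finiteness matters because the balancedness condition compares \emph{cardinalities} of intersections, and only in the finite setting does the equality of the $m_{e,y}$ force equality of these multiplicities; the quasi-affineness hypothesis is what supplies the semiregular displacement group needed to make the count well behaved in the first place.
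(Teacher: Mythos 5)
Your proposal is correct and follows essentially the same route as the paper: both reduce $(3)\Rightarrow(1)$ to the balancedness condition of Theorem~\ref{thm:affine} by presenting $Q$ as a semiregular extension and computing $m_{e,y}$ as $|\Ker{1-f}|$ times the number of $d_i$'s lying in the relevant coset of $\im{1-f}$, with finiteness used only to cancel the constant factor $|\Ker{1-f}|$. The one quibble is your parenthetical identification of that constant with $|\dis Q|/|\mathrm{orbit}|$ --- for an indecomposable extension the orbits of $Q$ are the whole fibres, so that ratio is $1$ by semiregularity; the correct constant is $|\Ker{1-f}|=|A|/|\im{1-f}|$ (the orbit in question being an orbit of the fibre subquandle $\aff{A,f}$, not of $Q$), as you also state.
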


\begin{proof}
$(1)\Rightarrow(2)$ is Proposition \ref{p:occurrences} and $(2)\Rightarrow(3)$ is trivial.

$(3)\Rightarrow(1)$.
Assume that $Q=\ext{A,f,\bar{d}}$ is an indecomposable extension and take $j\in I$, $b\in A$ such that $x=(j,b)$. Consider $y=(j,c)\in Qx$. We have
\[m_{(j,b),(j,c)}=|\{(i,a):\ (i,a)*(j,b)=(j,c)\}|=|\{(i,a):\ (1-f)(a)+f(b)+d_i-d_j=c\}|.\]
Given $i,j,b,c$, the number of $a$'s satisfying the equation is precisely $|\Ker{1-f}|$, hence
\[m_{(j,b),(j,c)}=|\Ker{1-f}|\cdot|\{i:\ d_i-d_j+f(b)-c\in\im{1-f}\}|.\]
Denoting $u_c=d_j-f(b)+c$, we obtain
\[m_{(j,b),(j,c)}=|\Ker{1-f}|\cdot|\{i:\ d_i\in u_c+\im{1-f}\}|.\]
According to (3), this expression shall be independent of $y=(j,c)$. Running over all $c\in A$, the element $u_c$ also runs over all elements of $A$, and thus all cosets of $\im{1-f}$ must contain the same number of $d_i$'s. In other terms, $\bar d$ is a multitransversal of $A/\im{1-f}$.
According to Theorem~\ref{thm:affine}, the quandle~$Q$ is affine.
\end{proof}

We used finiteness only in the last step of the proof (independence of $m_{(j,b),(j,c)}$ on $(j,c)$ implies independence of the right factor on $(j,c)$), and it would have been sufficient to assume only that~$\mathrm{Ker}(1-f)$ is finite. Therefore, we could have only assumed that $m_{x,x}$ is finite for some $x$. Without this assumption, the implication fails, as witnessed by the following example.

\begin{example}
Let $Q=\ext{\Z,1,\bar{d}}$, where $\bar{d}$ contains every non-zero integer once and zero twice. 
Then $\bar d$ is not a multitransversal of $\Z/\im{1-f}=\Z/0$,
and this implies, as we shall see in Proposition~\ref{thm:affbalanced}, that $Q$ is not affine.
But $m_{x,y}$ is infinite countable for every $x,y$, hence $Q$ satisfies condition~(2) of Theorem~\ref{thm:affine_fin}.
\end{example}

The following example shows that the implication $(2)\Rightarrow(1)$ of Theorem~\ref{thm:affine_fin} does not hold without the assumption that $Q$ is quasi-affine. 

\begin{example}
Let $Q$ be the quandle defined by the following multiplication table:
$$\begin{array}{c|cccccccc}
Q & 1 & 2 & 3 & 4 & 5 & 6 & 7 & 8\\
\hline
1 & 1 & 2 & 3 & 4 & 5 & 6 & 7 & 8\\
2 & 1 & 2 & 3 & 4 & 5 & 6 & 7 & 8\\
3 & 2 & 1 & 3 & 4 & 6 & 5 & 8 & 7\\
4 & 2 & 1 & 3 & 4 & 6 & 5 & 8 & 7\\
5 & 2 & 1 & 4 & 3 & 5 & 6 & 8 & 7\\
6 & 2 & 1 & 4 & 3 & 5 & 6 & 8 & 7\\
7 & 1 & 2 & 4 & 3 & 6 & 5 & 7 & 8\\
8 & 1 & 2 & 4 & 3 & 6 & 5 & 7 & 8
\end{array}.$$
It satisfies condition (2) of Theorem~\ref{thm:affine_fin} but it is not quasi-affine, since $\dis Q$ is not semiregular. 
(The example is a 2-reductive medial quandle and it was constructed using the methods of \cite[Section 6]{JPSZ}.)
\end{example}

The following example shows that it is not possible to characterize affine quandles by a first-order property of the displacement group. 

\begin{example}
Let $Q_1=\ext{\Z_3,1,(0,1)}$ and $Q_2=\ext{\Z_3,2,(0,0)}=\aff{\Z_6,-1}$. Then both $\dis{Q_1}$ and $\dis{Q_2}$ are isomorphic to $\Z_3$, but $Q_1$ is not affine (see Example \ref{Ex:6}).
\end{example}

Affine quandles have tiny displacement groups, but the converse is not true. In fact, the conditions ``$\dis Q$ is tiny'' and ``$Q$ is quasi-affine'' are independent for medial quandles, as witnessed by the following example.

\begin{example}\ 
\begin{itemize}
	\item The quandle $Q=\ext{\Z_2,1,(0,0,1)}$ is quasi-affine, $\dis Q$ is tiny but~$Q$ is not affine.
	\item The quandle $Q=\ext{\Z_3,1,(0,1)}$ is quasi-affine but $\dis Q$ is not tiny.
	\item The quandle $Q=\aff{\Z_4,-1}/\alpha$, where $\alpha$ is the congruence with blocks $\{0,2\}$, $\{1\}$, $\{3\}$, is not quasi-affine, because $\dis Q$ does not act semiregularly (it fixes the block $\{0,2\}$), but $\dis Q$ is tiny.
\end{itemize}
\end{example}

In the end, we also find worth mentioning the following characterization which follows from the preceding theory.

\begin{corollary}\label{r:homim}
A quandle is a homomorphic image of a quasi-affine quandle if and and only if it is medial.
\end{corollary}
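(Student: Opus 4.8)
The plan is to prove the two implications separately, exploiting that mediality is an equational property and that free medial quandles are already known to be quasi-affine.

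For the implication ``homomorphic image of quasi-affine $\Rightarrow$ medial'', I would argue purely equationally. Mediality is defined by the single identity $(x*y)*(u*v)=(x*u)*(y*v)$, so the class of medial quandles is a variety and is in particular closed under subquandles and homomorphic images. Affine quandles are medial, hence so are their subquandles; thus every quasi-affine quandle is medial. Applying closure under homomorphic images, any homomorphic image of a quasi-affine quandle is medial as well.

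For the converse, I would invoke the standard universal-algebraic fact that in any variety every algebra is a homomorphic image of a free algebra: a medial quandle $Q$ is a quotient of the free medial quandle on the generating set $Q$, via the surjection extending the identity map. It then suffices to know that free medial quandles are quasi-affine, which is exactly the content of Example \ref{ex:free} (following \cite{JPZ}): the free medial quandle of rank $|X|$ is represented as the semiregular extension $\ext{A,t,\bar e}$ over the free $\Z[t,t^{-1}]$-module $A$. By Lemma \ref{l:ext1} such an extension has abelian semiregular displacement group, so by the implication $(2)\Rightarrow(1)$ of Theorem \ref{thm:quasi-affine} it is quasi-affine. Composing the two facts exhibits $Q$ as a homomorphic image of a quasi-affine quandle.

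The proof carries essentially no obstacle beyond correctly citing that free medial quandles are quasi-affine; both closure directions are formal consequences of mediality being equational. The only point requiring care is to confirm that the free object used in the converse really is quasi-affine \emph{in the sense of this paper} (i.e.\ embeds into an affine quandle), which is precisely why its description as a semiregular extension is convenient: it lets us conclude quasi-affineness via Theorem \ref{thm:quasi-affine} without constructing the affine overquandle explicitly.
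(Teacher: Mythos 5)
Your proof is correct, but the converse direction takes a genuinely different route from the paper. The paper's argument is self-contained: it invokes Lemma \ref{l:ext2}, which constructs, for any medial quandle $Q$, the explicit semiregular extension $E=\ext{\dis Q,f,(L_xL_e^{-1}:x\in T)}$ together with the surjection $(x,\alpha)\mapsto\alpha(x)$ onto $Q$; since $E$ is quasi-affine by Lemma \ref{l:ext1} and Theorem \ref{thm:quasi-affine}, the claim follows, and the covering quandle is concrete and of size $|\dis Q|\cdot|T|$ (finite when $Q$ is finite). You instead factor $Q$ through the free medial quandle on the set $Q$, and then cite Example \ref{ex:free} (resting on \cite{JPZ}) for quasi-affineness of free medial quandles. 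This is a clean formal argument, and it is legitimate here because left division is a basic operation, so medial quandles do form a variety and free objects exist; but it imports an external structural result about free medial quandles and produces an infinite (indeed free $\Z[t,t^{-1}]$-module based) cover even for a finite $Q$. The paper's route buys economy and explicitness; yours buys a one-line reduction to a known fact, at the cost of a heavier (and externally sourced) covering object. Both directions of your argument are sound as written.
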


\begin{proof}
Mediality is clearly a necessary condition, and the other direction follows immediately from Theorem \ref{thm:quasi-affine} and Lemma \ref{l:ext2}: every medial quandle is a homomorphic image of some $E=\ext{A,f,\bar d}$.
\end{proof}

\section{Algorithms}\label{sec:alg}

\def\krok#1#2{#1 & #2 \\}
\def\io#1#2{{$\ $}\\ \begin{tabular}{ll} {\bf In:} & #1 \\ {\bf Out:} & #2 \end{tabular} \\}
\def\afont#1{{\bf #1}}

We will discuss two decision problems. On input, we have a quandle. We are asked to decide whether the quandle is affine or quasi-affine, respectively.
We will assume that the input quandle is in the form of a multiplication table, although one can imagine other representations for which the algorithms work efficiently.
Both algorithms are based on the properties of the displacement group, as described in conditions (2) of Theorems \ref{thm:quasi-affine} and \ref{thm:affine}. Since the input is finite, we will check balancedness using Theorem \ref{thm:affine_fin}. 
Let us start with the affine case.

\begin{algorithm}\label{alg:affine}
\noindent
\io{a quandle $Q$}{is $Q$ affine?}
\begin{tabular}{ll}
\krok{1.}{pick $e\in Q$}
\krok{2.}{$D:=\{L_xL_e^{-1}: x\in Q\}$}
\krok{3.}{\afont{for each} $\alpha\in D$ \afont{do}}
\krok{4.}{\quad \afont{if} $0<|Fix(\alpha)|<|Q|$ \afont{then return} false}
\krok{5.}{\quad \afont{for each} $\beta\in D$ \afont{do}}
\krok{6.}{\qquad \afont{if} $\alpha\beta\neq\beta\alpha$ \afont{then return} false}
\krok{7.}{\qquad \afont{if} $\alpha\beta\not\in D$ \afont{then return} false}
\krok{8.}{$m_x:=0$ for each $x\in Q$}
\krok{9.}{\afont{for each} $x\in Q$ \afont{do} $m_{xe}:=m_{xe}+1$}
\krok{10.}{\afont{for each} $x\in Q$ \afont{do} \afont{if} $m_{xe}\neq m_{e}$ \afont{then return} false}
\krok{11.}{\afont{return} true}
\end{tabular}
\end{algorithm}

In the first part of the algorithm (lines 1--7), it is checked whether $\dis Q$ is semiregular, abelian and tiny. All of these are necessary conditions for a quandle to be affine, and sufficient to be quasi-affine. If succeeded, the algorithm checks condition (2) of Theorem \ref{thm:affine_fin}, picking the column $e$. We use an observation that if $\dis Q$ is tiny then $Qe=\{xe:x\in Q\}$.

\begin{proposition}\label{p:alg-affine}
Algorithm \ref{alg:affine} runs in $\bigo(n^3\log n)$ time with respect to $n=|Q|$. 
\end{proposition}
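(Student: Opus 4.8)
The plan is to establish the $\bigo(n^3\log n)$ bound by accounting for the cost of each block of the algorithm in terms of $n=|Q|$, treating the multiplication table as the input of size $n^2$. First I would fix the data-structure conventions: each permutation of $Q$ is stored as an array of length $n$, so that evaluating $\alpha(x)$ is $\bigo(1)$, composing two permutations $\alpha\beta$ is $\bigo(n)$, inverting a permutation is $\bigo(n)$, and testing equality $\alpha\beta=\beta\alpha$ or membership-in-a-list is $\bigo(n)$ per comparison. A single left translation $L_x$ is read directly off row $x$ of the table, and $L_e^{-1}$ is computed once in $\bigo(n)$ time.

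Next I would bound the construction of $D$ in line~2. For each of the $n$ elements $x$ we build $L_xL_e^{-1}$ in $\bigo(n)$ time, so producing the raw list costs $\bigo(n^2)$. The only subtlety is that $D$ is meant as a \emph{set} of permutations (we will test membership against it in line~7), so I would store it in a structure allowing fast lookup; the cleanest accounting is to keep $D$ sorted, which costs $\bigo(n^2\log n)$ (sorting $n$ keys, each of length $n$, giving the extra $\log n$ factor through $\bigo(n\log n)$ comparisons at $\bigo(n)$ each), after which a membership test is $\bigo(n\log n)$ by binary search. This is precisely where the $\log n$ factor in the final bound originates.

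The dominant cost is the double loop in lines~3--7. The outer loop runs over the at most $n$ elements of $D$, and for each the inner loop again runs over at most $n$ elements. The fixed-point test in line~4 is $\bigo(n)$. Inside the inner body, computing $\alpha\beta$ and $\beta\alpha$ and comparing them is $\bigo(n)$, and the membership test $\alpha\beta\in D$ is $\bigo(n\log n)$ using the sorted structure. Hence each of the $\bigo(n^2)$ inner iterations costs $\bigo(n\log n)$, giving $\bigo(n^3\log n)$ overall, which dominates the $\bigo(n^2\log n)$ setup. The final balancedness check in lines~8--10 merely scans the column of $e$: computing all $m_{xe}$ is a single pass of $\bigo(n)$ increments, and comparing each against $m_e$ is $\bigo(n)$, so this contributes only $\bigo(n)$ and is negligible.

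The main obstacle, and the one point deserving genuine care rather than routine bookkeeping, is justifying that the membership test in line~7 is the step that forces the $\log n$ factor and that no cheaper bound is silently assumed; one must commit to a concrete representation of $D$ that supports both the $\bigo(n^2)$-amortized construction and sublinear-in-$|D|$ lookups, since a naive linear scan of $D$ would make line~7 cost $\bigo(n^2)$ and inflate the inner body to $\bigo(n^3)$ total. I would therefore state explicitly that $D$ is kept sorted (or in a balanced search tree / hash structure) so that the membership test is $\bigo(n\log n)$, and then simply observe that summing the contributions gives $\bigo(n^3\log n)$ as claimed.
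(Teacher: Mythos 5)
Your proposal is correct and follows essentially the same approach as the paper: bound the cost of each permutation operation and multiply by the $\bigo(n^2)$ pairs traversed in lines 3--7, with everything else negligible. The only difference is bookkeeping --- the paper charges $\bigo(n\log n)$ to every basic permutation operation and folds the line-7 membership test into that blanket claim, whereas you take basic operations to be $\bigo(n)$ and locate the $\log n$ factor in a sorted-set lookup for $D$; your explicit treatment of that membership test (which a naive linear scan would inflate) is if anything more careful than the paper's one-line argument.
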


\begin{proof}
All operations performed with permutations on $Q$ (comparison, composition, counting fixed points) can be calculated in $\bigo(n\log n)$ time. 
In the first part (lines 1--7), we run over $n^2$ pairs of permutations $\alpha,\beta$, performing a fixed amount of operations with them, resulting in $\bigo(n^3\log n)$ time. The remaining part of the algorithm takes essentially linear time.
\end{proof}

In the quasi-affine case, we do not have the convenient condition that the displacement group is tiny.
To avoid a blow-up during calculation of $\dis Q$ when the input is not quasi-affine, we implement a convenient upper bound on $|\dis Q|$ under the assumption of semiregularity.

\begin{lemma}\label{l:alg}
Let $Q$ be a quandle with $\dis Q$ acting semiregularly. Then $|\dis Q|\leq|Q|$.
\end{lemma}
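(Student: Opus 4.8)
The plan is to exploit the interplay between semiregularity and the orbit decomposition of $Q$. The crucial observation is that a semiregular action has only trivial point stabilizers: by definition, if $1\neq g\in\dis Q$, then $g$ has no fixed point, so the stabilizer $\{g\in\dis Q:\ g(x)=x\}$ of any $x\in Q$ is trivial.

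First I would fix an element $x\in Q$ (which exists since quandles are non-empty) and consider its orbit $Qx=\{\alpha(x):\ \alpha\in\dis Q\}$ under the natural action of $\dis Q$. By the orbit--stabilizer theorem, the cardinality of $Qx$ equals the index of the stabilizer of $x$ in $\dis Q$; since that stabilizer is trivial by semiregularity, we obtain $|Qx|=|\dis Q|$. Hence \emph{every} orbit of $\dis Q$ on $Q$ has cardinality exactly $|\dis Q|$.

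Finally, since $Q$ is the disjoint union of its orbits and each orbit has size $|\dis Q|$, I conclude that $|Q|\geq|\dis Q|$; in the finite case $|Q|$ is in fact an integer multiple of $|\dis Q|$, namely the number of orbits times $|\dis Q|$.

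There is no substantial obstacle in this argument; the whole content is the passage from semiregularity to trivial stabilizers, after which orbit--stabilizer does the work. The only point requiring a moment's care is the standing convention that $Q$ is non-empty, so that at least one orbit exists and the inequality is not vacuous.
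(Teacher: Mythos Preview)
Your proof is correct and follows essentially the same approach as the paper: both fix a point, invoke the orbit--stabilizer theorem, and use semiregularity to conclude that the stabilizer is trivial, so $|\dis Q|=|Qe|\leq|Q|$.
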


\begin{proof}
For any group, we have $|G|=|G_e|\cdot|Orb(e)|$. In particular, $|\dis{Q}|=|\dis{Q}_e|\cdot|Qe|$. If $\dis Q$ acts semiregularly, then $|\dis Q_e|=1$, and thus $|\dis Q|=|Qe|\leq|Q|$.
\end{proof}

\begin{algorithm}\label{alg:quasi-affine}
\noindent
\io{a quandle $Q$}{is $Q$ quasi-affine?}
\begin{tabular}{ll}
\krok{1.}{pick $e\in Q$ }
\krok{2.}{$D:=\{L_xL_e^{-1}:x\in Q\}$}
\krok{3.}{\afont{for each} $\alpha\in D$ \afont{do}}
\krok{4.}{\quad \afont{if} $0<|Fix(\alpha)|<|Q|$ \afont{then return} false}
\krok{5.}{\quad \afont{for each} $\beta\in D$ \afont{do}}
\krok{6.}{\qquad \afont{if} $\alpha\beta\neq\beta\alpha$ \afont{then return} false}
\krok{7.}{$P:=\{\{\alpha,\beta\}:\alpha,\beta\in D\}$}
\krok{8.}{\afont{while} $P\neq\emptyset$ \afont{do} }
\krok{9.}{\quad select $\{\alpha,\beta\}\in P$, remove $\{\alpha,\beta\}$ from $P$}
\krok{10.}{\quad \afont{if} $\alpha\beta\not\in D$ \afont{then}}
\krok{11.}{\qquad \afont{if} $|D|\geq |Q|$ \afont{then return} false}
\krok{12.}{\qquad \afont{if} $0<|Fix(\alpha\beta)|<|Q|$ \afont{then return} false}
\krok{13.}{\qquad $D:=D\cup\{\alpha\beta\}$}
\krok{14.}{\qquad $P:=P\cup\{\{\alpha\beta,\delta\}:\delta\in D\}$}
\krok{15.}{\afont{return} true}
\end{tabular}
\end{algorithm}

In the first part of the algorithm (lines 1--6), we consider the generators of $\dis Q$ and check whether they commute and have the correct number of fixed points. If succeeded, on lines 7--14, the algorithm generates $\dis Q$ in a standard way.
(In the expression $\{\alpha,\beta\}$, we allow $\alpha=\beta$. Then the result is a one-element set that represents the mapping $\alpha^2$.)
Whenever we find a composition $\alpha\beta$ not yet in $D$, we check whether it has the correct number of fixed points, and if so, $\alpha\beta$ is added to $D$ and $P$ is expanded accordingly (no need to check commutativity, since a group is abelian if and only if its generators commute). The algorithm terminates on line 11 if it realizes that $|\dis Q|>|Q|$, thanks to Lemma \ref{l:alg}.

\begin{proposition}\label{p:alg-quasi-affine}
Algorithm \ref{alg:quasi-affine} runs in $\bigo(n^3\log n)$ time with respect to $n=|Q|$. 
\end{proposition}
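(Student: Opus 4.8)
The plan is to mirror the complexity analysis of Algorithm~\ref{alg:affine} carried out in Proposition~\ref{p:alg-affine}, the one genuinely new ingredient being that here the set $D$ is \emph{grown} into the full displacement group on lines~8--14, so its size must be controlled along the way. As in the affine case, I would first record the unit costs: each elementary manipulation of a permutation on the $n$-element set $Q$ — comparing two permutations, composing them, or counting fixed points — can be done in $\bigo(n\log n)$ time, and a membership query ``$\alpha\beta\in D$?'' can be answered within the same bound, provided $D$ is stored in a searchable structure (a hash table keyed on permutations, or a balanced search tree). Everything then reduces to counting how often these operations are invoked.

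The first block (lines~1--6) is handled exactly as before. Forming $D=\{L_xL_e^{-1}:x\in Q\}$ requires $n$ compositions and so costs $\bigo(n^2\log n)$, and the double loop on lines~3--6 ranges over at most $n^2$ pairs (since $|D|\le n$ at this stage), spending $\bigo(n\log n)$ on each, for a total of $\bigo(n^3\log n)$.

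The core of the argument is the generation phase on lines~7--14, and the key is the invariant $|D|\le n$, which the guard on line~11 maintains throughout: a new element is appended on line~13 only while $|D|<|Q|=n$, for otherwise line~11 returns \textbf{false} first. Hence $D$ is enlarged at most $n$ times. The pool $P$ begins with $\bigo(n^2)$ unordered pairs, and each of the at most $n$ enlargements contributes $\bigo(n)$ fresh pairs on line~14; thus at most $\bigo(n^2)$ pairs are ever inserted into $P$, so the while-loop body runs $\bigo(n^2)$ times. Each pass performs a constant number of permutation operations and one membership test, all within $\bigo(n\log n)$, giving $\bigo(n^3\log n)$; the set updates on lines~13--14 occur only on enlargements (at most $n$ times, $\bigo(n\log n)$ each) and so contribute a lower-order $\bigo(n^2\log n)$. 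Adding the two blocks yields the claimed $\bigo(n^3\log n)$ bound.

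The step I expect to require the most care is justifying this $\bigo(n^2)$ iteration bound precisely in the \emph{non}-quasi-affine case, where $\dis Q$ may be astronomically large and a naive generation would never finish in polynomial time. The point is that Lemma~\ref{l:alg} forces $|\dis Q|\le n$ whenever $\dis Q$ is semiregular, so the instant the generated set threatens to exceed this size we may safely abort on line~11, concluding that semiregularity — and hence quasi-affineness, by Theorem~\ref{thm:quasi-affine} — fails. This is what keeps both $|D|$ and $|P|$ polynomially bounded for \emph{every} input, not merely for the quasi-affine ones. I would also be explicit about the data-structure choices — a hashed or balanced-tree representation of $D$ for $\bigo(n\log n)$ membership tests, and a stack or queue for $P$ giving $\bigo(1)$ selection on line~9 — since it is exactly these that pin down the single logarithmic factor and keep it from degrading to $\bigo(n^3\log^2 n)$.
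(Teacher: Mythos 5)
Your proof is correct and follows essentially the same route as the paper: lines 1--6 cost $\bigo(n^3\log n)$ as in Proposition \ref{p:alg-affine}, and the generation phase is bounded by observing that Lemma \ref{l:alg} caps $|D|$ at $n$, so at most $n$ enlargements occur, at most $\bigo(n^2)$ pairs ever enter $P$, and each iteration costs $\bigo(n\log n)$. Your added remarks on data structures for the membership test on line 10 make explicit a point the paper leaves implicit, but the argument is the same.
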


\begin{proof}
As in Proposition \ref{p:alg-affine}, the first part (lines 1--6) results in $\bigo(n^3\log n)$ time. 
In the second part, we start with $|P|\leq n^2$. The condition on line 10 is satisfied at most $n$ times, thanks to Lemma \ref{l:alg} employed on line 11. Therefore, during the run of the algorithm, at most $n^2$ unordered pairs are added to $|P|$ on line 14, an the loop will finish after at most $2n^2$ steps. Each step only does a fixed amount of operations with permutations on $Q$, hence the loop requires $\bigo(n^3\log n)$ time.
\end{proof}


Both algorithms solve the decision problems (the answer is yes/no). To output the actual affine representation is a more complex problem. We can indeed retrieve a representation in the form of a semiregular extension, as suggested by Lemma \ref{l:ext2}, as all information is readily available. Given $Q=\ext{A,f,\bar d}$, the proof of Theorem \ref{thm:quasi-affine} explains how to expand $\bar d$ so that the result is an affine quandle into which $Q$ embeds; this expansion can be implemented efficiently. However, given an affine quandle in the form of a semiregular extension, it is not clear how to obtain the actual affine representation $\Aff(E,g)$. In the proof of Theorem \ref{thm:affine}, we used the Hou-\v S\v tov\'\i\v cek lemma to find the pair $(E,g)$, but its proof is not constructive and cannot be transformed into an algorithm.

\begin{remark}
Universal algebra suggests an alternative approach to checking whether a given quandle is quasi-affine. Theorem \ref{thm:quasi-affine} states that a quandle $Q$ is quasi-affine if and only if it is abelian (in the universal algebraic sense). Then $Q$ is abelian if and only if the diagonal is a block of a congruence in the direct power $Q^2$, that is, if and only if the congruence of $Q^2$ generated by the diagonal has the diagonal as its block. Congruences of binary algebras can be generated in cubic time with respect to the number of elements, see \cite{DDK} for a fast algorithm. This provides an alternative to Algorithm \ref{alg:quasi-affine}, but the time complexity seems to be much worse.
\end{remark}

\section{Affine meshes and the isomorphism theorem}\label{sec:iso}

In \cite{JPSZ}, we developed a representation of medial quandles by certain heterogeneous affine structure, called affine meshes. 
First, we recall essential constructions and results.

\begin{de}
An \emph{affine mesh} over a non-empty set $I$ is a triple
$$\mathcal A=((A_i)_{i\in I},\,(\varphi_{i,j})_{i,j\in I},\,(c_{i,j})_{i,j\in I})$$ where $A_i$ are abelian groups, $\varphi_{i,j}:A_i\to A_j$ homomorphisms, and $c_{i,j}\in A_j$ constants, satisfying the following conditions for every $i,j,j',k\in I$:
\begin{enumerate}
    \item[(M1)] $1-\varphi_{i,i}$ is an automorphism of $A_i$;
    \item[(M2)] $c_{i,i}=0$;
    \item[(M3)] $\varphi_{j,k}\varphi_{i,j}=\varphi_{j',k}\varphi_{i,j'}$, i.e., the following diagram commutes:
$$\begin{CD}
A_i @>\varphi_{i,j}>> A_j\\ @VV\varphi_{i,j'}V @VV\varphi_{j,k}V\\
A_{j'} @>\varphi_{j',k}>> A_k
\end{CD}$$
    \item[(M4)] $\varphi_{j,k}(c_{i,j})=\varphi_{k,k}(c_{i,k}-c_{j,k})$.
\end{enumerate}
The mesh is called \emph{indecomposable} if
for every $j\in I$, the group $A_j$ is generated by the set \[\{c_{i,j},\varphi_{i,j}(a):\ i\in I,a\in A_i\}.\]
If the index set is clear from the context, we shall write briefly $\mathcal A=(A_i,\varphi_{i,j},c_{i,j})$.
\end{de}

\begin{de}
The \emph{sum of an affine mesh} $(A_i,\varphi_{i,j},c_{i,j})$ over a set $I$ is an algebraic structure defined on the disjoint union of the sets $A_i$ by
\[a\ast b=c_{i,j}+\varphi_{i,j}(a)+(1-\varphi_{j,j})(b),\]
for every $a\in A_i$ and $b\in A_j$.
\end{de}

The sum $Q$ of an affine mesh is a medial quandle. The fibers $A_i$, $i\in I$, form subquandles of $Q$ which are affine, namely, $\aff{A_i,1-\varphi_{i,i}}$. If the mesh is indecomposable, then the fibers are precisely the orbits of $Q$.

\begin{theorem}\cite[Theorem 3.14]{JPSZ}\label{thm:decomposition}
An algebraic structure $(Q,*)$ is a medial quandle if and only if it is the sum of an indecomposable affine mesh.
\end{theorem}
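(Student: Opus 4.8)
The statement is a biconditional, and the backward direction---that the sum of an affine mesh is a medial quandle, with the fibres as orbits when the mesh is indecomposable---is exactly the content of the remarks preceding the theorem; I would regard it as a direct verification of the quandle axioms from the sum formula $a*b=c_{i,j}+\varphi_{i,j}(a)+(1-\varphi_{j,j})(b)$, the unique solvability of left division being guaranteed by (M1). The real work is the forward direction: manufacturing an indecomposable affine mesh whose sum is a given medial quandle $Q$.

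The plan is to read the mesh off the orbit decomposition. First I would split $Q=\bigsqcup_{i\in I}Q_i$ into its orbits under $\dis Q$, which is abelian because $Q$ is medial, and fix a base point $e_i$ in each orbit. Since an abelian group acting transitively on $Q_i$ has all point-stabilisers equal, and this common stabiliser acts trivially on $Q_i$, the bijection $\dis Q/\dis{Q}_{e_i}\to Q_i$, $\gamma\mapsto\gamma(e_i)$, transports an abelian group structure onto $Q_i$; call the resulting group $A_i$, with neutral element $e_i$ and addition $a+b=\gamma(b)$ whenever $\gamma(e_i)=a$. I would then set $c_{i,j}:=e_i*e_j\in A_j$ (so $c_{j,j}=e_j*e_j=e_j=0$, which is (M2)), and $\varphi_{j,j}:=1-L_{e_j}|_{A_j}$; since $L_{e_j}$ restricts to the group automorphism of $A_j=\dis Q/\dis{Q}_{e_j}$ induced by conjugation, $1-\varphi_{j,j}=L_{e_j}|_{A_j}$ is an automorphism, giving (M1). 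Finally I would put $\varphi_{i,j}(a):=(a*e_j)-c_{i,j}=L_aL_{e_i}^{-1}(e_j)$ for $i\ne j$.

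With these data the reconstruction of $*$ should fall out of two short computations. Writing $\delta_a:=L_aL_{e_i}^{-1}\in\dis Q$, which acts on the group $A_j$ as the translation by $\delta_a(e_j)=\varphi_{i,j}(a)$, I would first check the base-point identity $e_i*b=c_{i,j}+(1-\varphi_{j,j})(b)$ from $L_{e_i}L_{e_j}^{-1}(e_j*b)=L_{e_i}(b)$, and then compute
\[
a*b=L_a(b)=\delta_a(e_i*b)=\varphi_{i,j}(a)+\bigl(c_{i,j}+(1-\varphi_{j,j})(b)\bigr),
\]
which is precisely the sum operation of the mesh. Indecomposability would follow from the fact that $\dis Q$ is generated by the $L_aL_b^{-1}$ and acts transitively on each $Q_j$, so that $A_j$ is generated by the values $c_{i,j}$ and $\varphi_{i,j}(a)$.

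The hard part will be proving that each $\varphi_{i,j}$ is a genuine group homomorphism $A_i\to A_j$, together with the coherence conditions (M3) and (M4). The subtlety is that the point-stabilisers $\dis{Q}_{e_i}$ in different orbits need not coincide, so an element of $\dis Q$ realising a prescribed translation on $A_i$ is pinned down only up to a subgroup that may act non-trivially on another orbit $Q_j$; consequently the additivity of $a\mapsto\varphi_{i,j}(a)$ cannot be seen from the orbit $Q_i$ alone. The resolution is the full strength of mediality, read as entropicity: the identity $(x*y)*(u*v)=(x*u)*(y*v)$ says exactly that $*\colon Q\times Q\to Q$ is a quandle homomorphism, which forces every right translation $a\mapsto a*e_j$ to be an affine map $A_i\to A_j$ with linear part $\varphi_{i,j}$, and makes the cross-orbit evaluations additive; the same entropic manipulations, together with the consequence $\alpha^{L_x}=\alpha^{L_y}$ of mediality recorded in (\ref{Eq:conj}), yield (M3) and (M4) after expanding the definitions. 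This single homomorphism lemma is where all the weight of the hypothesis lies; the rest is bookkeeping.
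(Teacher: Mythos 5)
The paper does not actually prove this statement: it is imported verbatim from \cite{JPSZ} (Theorem 3.14 there), so there is no internal proof to compare against. Your reconstruction follows what is essentially the argument of that source --- decompose $Q$ into orbits, use the transitive action of the abelian group $\dis Q$ to transport an abelian group structure onto each orbit, and read the homomorphisms $\varphi_{i,j}$ and constants $c_{i,j}$ off the left translations --- and the strategy is sound, including the identification of the additivity of $\varphi_{i,j}$ as the only nontrivial point. That step is discharged most cleanly not by the somewhat vague appeal to ``entropic manipulations'' but by the observation that $L_aL_{e_i}^{-1}=[\gamma,L_{e_i}]$ for \emph{any} $\gamma\in\dis Q$ with $\gamma(e_i)=a$: by (\ref{Eq:conj}), conjugation by $L_x$ is an automorphism $f$ of $\dis Q$ independent of $x$, so $\gamma\mapsto[\gamma,L_{e_i}]=(1-f)(\gamma)$ is an endomorphism of the abelian group $\dis Q$ which kills the stabilizer of $e_i$ (there $[\delta,L_{e_i}]=L_{\delta(e_i)}L_{e_i}^{-1}=1$); evaluating at $e_j$ then gives a well-defined group homomorphism $A_i\to A_j$, and (M3), (M4) follow by the same token, e.g.\ $\varphi_{j,k}\varphi_{i,j}(a)=(1-f)^2(\gamma)(e_k)$ is manifestly independent of $j$.
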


A quasi-affine quandle is medial, hence it is the sum of an affine mesh. The structure of the mesh is easily derived from the parameters of the semiregular extension.

\begin{observation}\label{o:ext}
A semiregular extension $\ext{A,f,(d_i:i\in I)}$ is the sum of an affine mesh \[((A)_{i\in I},\,(1-f)_{i,j\in I},\,(d_i-d_j)_{i,j\in I}).\]
The mesh (and thus the extension) is indecomposable if and only if the group $A$ is generated by the set 
\[\im{1-f}\cup\{d_i-d_j:\ i,j\in I\}.\]
\end{observation}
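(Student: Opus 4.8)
The plan is to verify directly that the sum of the proposed affine mesh reproduces the quandle operation of the semiregular extension, and then to unwind the indecomposability condition from the definition of mesh indecomposability.

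First I would check that the triple $((A)_{i\in I},\,(1-f)_{i,j\in I},\,(d_i-d_j)_{i,j\in I})$ actually is an affine mesh, i.e., that it satisfies (M1)--(M4). For (M1), since all the groups are the same $A$ and all the off-diagonal-or-not homomorphisms equal $1-f$, the map $1-\varphi_{i,i}=1-(1-f)=f$ is an automorphism of $A$ by hypothesis. For (M2), $c_{i,i}=d_i-d_i=0$. For (M3), both composites $\varphi_{j,k}\varphi_{i,j}$ and $\varphi_{j',k}\varphi_{i,j'}$ equal $(1-f)^2$, so the square commutes trivially. For (M4), the left side is $\varphi_{j,k}(c_{i,j})=(1-f)(d_i-d_j)$ and the right side is $\varphi_{k,k}(c_{i,k}-c_{j,k})=(1-f)((d_i-d_k)-(d_j-d_k))=(1-f)(d_i-d_j)$, so the two agree. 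Each step here is a one-line substitution, so I would not belabor them.

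Next I would compute the sum operation of this mesh and match it against Definition \ref{def:ext}. For $a\in A_i$ and $b\in A_j$, the sum formula gives
\[a\ast b=c_{i,j}+\varphi_{i,j}(a)+(1-\varphi_{j,j})(b)=(d_i-d_j)+(1-f)(a)+f(b),\]
using $1-\varphi_{j,j}=f$. Reindexing the disjoint union so that $a\in A_i$ is recorded as the pair $(i,a)$, this is exactly $(j,(1-f)(a)+f(b)+d_i-d_j)$, which is the operation $(i,a)*(j,b)$ of $\ext{A,f,\bar d}$. Hence the semiregular extension is the sum of this mesh.

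Finally, for the indecomposability statement I would apply the definition directly. The mesh is indecomposable if, for every $j\in I$, the group $A_j=A$ is generated by $\{c_{i,j},\varphi_{i,j}(a):i\in I,\ a\in A\}=\{d_i-d_j:i\in I\}\cup\{(1-f)(a):a\in A\}$. Since $\im{1-f}=\{(1-f)(a):a\in A\}$ is already a subgroup and each $d_i-d_j$ differs from $d_i-d_k$ by the element $d_k-d_j$ of the generated set, the subgroup generated by $\{d_i-d_j:i\in I\}$ for a fixed $j$ coincides, modulo $\im{1-f}$, with the subgroup generated by all differences $\{d_i-d_j:i,j\in I\}$; so the condition for a single fixed $j$ is equivalent to the condition that $A$ is generated by $\im{1-f}\cup\{d_i-d_j:i,j\in I\}$, independently of $j$. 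This reconciles the per-fiber definition of indecomposability with the symmetric formulation in the statement. Nothing here is genuinely difficult; the only point that needs a line of care is this last translation between ``generated for each $j$'' and ``generated using all pairs $i,j$,'' which I expect to be the sole nontrivial step.
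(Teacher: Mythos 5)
Your proof is correct and is exactly the routine verification the paper intends (the paper states this as an Observation without proof): checking (M1)--(M4), matching the sum formula to the extension's operation, and noting that for fixed $j$ the subgroup generated by $\im{1-f}\cup\{d_i-d_j: i\in I\}$ coincides with the one generated by $\im{1-f}\cup\{d_i-d_k: i,k\in I\}$ since $d_i-d_k=(d_i-d_j)-(d_k-d_j)$. No gaps.
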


\begin{de}
Two affine meshes, $\mathcal A=(A_i,\varphi_{i,j},c_{i,j})$ over an index set $I$,
and $\mathcal A'=(A_i',\varphi_{i,j}',c_{i,j}')$ over an index set $I'$, are called \emph{homologous}, if there exist a bijection $\pi:I\to I'$, group isomorphisms $\psi_i:A_i\to A_{\pi (i)}'$, and constants $e_i\in A_{\pi (i)}'$, such that, for every $i,j\in I$,
\begin{enumerate}
    \item[(H1)] $\psi_j\varphi_{i,j}=\varphi_{\pi (i),\pi (j)}'\psi_i$, i.e., the following diagram commutes:
$$ \begin{CD}
A_i @>\varphi_{i,j}>> A_j\\ @VV\psi_iV @VV\psi_jV\\
A_{\pi (i)}' @>\varphi_{\pi (i),\pi (j)}'>> A_{\pi (j)}'
\end{CD}$$
    \item[(H2)] $\psi_j(c_{i,j})=c_{\pi (i),\pi (j)}'+\varphi_{\pi (i),\pi (j)}'(e_i)-\varphi_{\pi (j),\pi (j)}'(e_j)$.
\end{enumerate}
\end{de}

\begin{theorem}\cite[Theorem 4.2]{JPSZ} \label{thm:isomorphism}
Let $\mathcal A=(A_i,\varphi_{i,j},c_{i,j})$ and $\mathcal A'=(A_i',\varphi_{i,j}',c_{i,j}')$ be two indecomposable affine meshes.
The sums of $\A$ and $\A'$ are isomorphic quandles if and only if the meshes $\A$, $\A'$ are homologous.
\end{theorem}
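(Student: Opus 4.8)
The plan is to prove both implications directly, dispatching the forward direction quickly and concentrating the effort on recovering the homology data from an abstract isomorphism.

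For the direction ``homologous $\Rightarrow$ isomorphic sums'' I would just write down the candidate map and check it works. Given the homology data $(\pi,\psi_i,e_i)$, define $F$ on the sum of $\A$ by $F(a)=\psi_i(a)+e_i$ for $a\in A_i$, landing in $A'_{\pi(i)}$. This is a bijection because each $\psi_i$ is one and $\pi$ is one. That it is a quandle homomorphism is a routine computation: expanding $F(a)\ast F(b)$ and $F(a\ast b)$ via the mesh operation $a\ast b=c_{i,j}+\varphi_{i,j}(a)+(1-\varphi_{j,j})(b)$, the terms linear in $a$ and in $b$ match by (H1) and the constant terms match by (H2). I do not expect any difficulty here.

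The substantial work is the converse. Suppose $F$ is an isomorphism between the sums $Q$ and $Q'$. First, $F$ conjugates $\lmlt Q$ onto $\lmlt{Q'}$ (since $FL_xF^{-1}=L_{F(x)}$) and hence $\dis Q$ onto $\dis{Q'}$, so it maps orbits to orbits; because both meshes are indecomposable, the orbits are exactly the fibers, and this produces the bijection $\pi\colon I\to I'$ with $F(A_i)=A'_{\pi(i)}$. The crux is to show that each restriction $F|_{A_i}$ is affine, i.e.\ of the form $a\mapsto\psi_i(a)+e_i$ for a group isomorphism $\psi_i$ and a constant $e_i$. My plan is to exploit indecomposability in its orbit form. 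A direct computation of the displacement action shows that for $a\in A_k$ and $b\in A_l$ the map $L_aL_b^{-1}$ acts on the fiber $A_i$ as the translation $x\mapsto x+(c_{k,i}-c_{l,i})+\varphi_{k,i}(a)-\varphi_{l,i}(b)$. Using $c_{i,i}=0$ and the indecomposability generation condition $A_i=\langle c_{k,i},\,\im{\varphi_{k,i}}:k\in I\rangle$, one sees that as $a,b,k,l$ vary these translation vectors exhaust all of $A_i$. Hence $\dis Q$ restricts to the full translation group of $A_i$, and likewise $\dis{Q'}$ restricts to the full translation group of $A'_{\pi(i)}$.

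With this in hand I would extract the affine form as follows. Since $F$ conjugates $\dis Q$ to $\dis{Q'}$ and respects fibers, it carries each fiber-translation $x\mapsto x+t$ of $A_i$ to some translation $y\mapsto y+s$ of $A'_{\pi(i)}$; setting $\psi_i(t)=s$ gives a well-defined bijection with $F(x+t)=F(x)+\psi_i(t)$ for all $x,t\in A_i$. Putting $e_i=F(0)$ and $x=0$ yields $F(a)=\psi_i(a)+e_i$, and the intertwining relation forces $\psi_i$ to be a group isomorphism. Finally, substituting this affine form of $F$ into the identity $F(a\ast b)=F(a)\ast F(b)$ and comparing the $a$-linear, $b$-linear, and constant parts recovers (H1) and (H2), so that $(\pi,\psi_i,e_i)$ is a homology. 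The main obstacle is precisely the middle step: proving that indecomposability guarantees $\dis Q$ acts as the entire translation group on each fiber, since this is exactly what upgrades the a priori merely set-theoretic bijection $F|_{A_i}$ into an affine map.
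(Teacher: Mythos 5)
The paper does not actually prove this statement: it is imported verbatim from \cite[Theorem 4.2]{JPSZ}, so there is no in-paper proof to compare yours against. Judged on its own, your argument is correct and self-contained, and (as far as the cited source goes) it follows the natural route. The forward direction is indeed the routine check you describe: with $F(a)=\psi_i(a)+e_i$ the $a$-terms match by (H1), the $b$-terms by (H1) applied with $i=j$, and the constants by (H2). For the converse, your key computation is right: for $a\in A_k$, $b\in A_l$, the map $L_aL_b^{-1}$ acts on $A_i$ as translation by $c_{k,i}-c_{l,i}+\varphi_{k,i}(a)-\varphi_{l,i}(b)$, and taking $l=i$, $b=0$ (so that $c_{i,i}=0$ kills two terms) these vectors generate the subgroup $\langle c_{k,i},\,\im{\varphi_{k,i}}:k\in I\rangle$, which is all of $A_i$ exactly by indecomposability. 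This simultaneously shows that the fibers are the orbits (so $\pi$ exists) and that $\dis Q$ restricts to the full translation group of each fiber.

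Two small points you should make explicit when writing this up. First, the well-definedness of $\psi_i$: if $\alpha,\alpha'\in\dis Q$ both restrict to translation by $t$ on $A_i$, then $F\alpha F^{-1}$ and $F\alpha'F^{-1}$ restrict on $A'_{\pi(i)}$ to $y\mapsto F(F^{-1}(y)+t)$, which depends only on $t$ and $F|_{A_i}$, so $\psi_i(t)$ does not depend on the choice of $\alpha$; additivity and injectivity then follow from $F(x+t)=F(x)+\psi_i(t)$. Second, surjectivity of $\psi_i$ requires the indecomposability of $\A'$ as well (apply the same construction to $F^{-1}$, using that $\dis{Q'}$ is the full translation group on $A'_{\pi(i)}$); you use this implicitly with ``likewise,'' but it is worth stating that both hypotheses of indecomposability are consumed here. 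With those two remarks included, the extraction of (H1) and (H2) by comparing the $a$-linear, $b$-linear and constant parts of $F(a*b)=F(a)*F(b)$ is exactly as you say, and the proof is complete.
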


Specializing Theorem \ref{thm:isomorphism}, we obtain an isomorphism theorem for indecomposable semiregular extensions.

\begin{theorem}\label{th:Isom_Ab}\label{thm:iso}
Let $\ext{A,f,(d_i:i\in I)}$ and $\ext{A',f',(d_i':i\in I')}$ be two indecomposable extensions. They are isomorphic if and only if
there exist a bijection $\pi:I\to I'$, an isomorphism $\psi:A\to A'$, and an element $a\in A'$ such that
\begin{itemize}
	\item[(E1)] $\psi f=f'\psi$,
	\item[(E2)] $\psi(d_i)-d_{\pi(i)}'\in a+\im{1-f'}$ for every $i\in I$.
 \end{itemize}
\end{theorem}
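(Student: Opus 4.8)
The plan is to derive Theorem~\ref{thm:iso} directly from the general isomorphism theorem for affine meshes, Theorem~\ref{thm:isomorphism}, by translating the homology conditions (H1) and (H2) into the simpler conditions (E1) and (E2). First I would invoke Observation~\ref{o:ext} to present each semiregular extension as the sum of an affine mesh: $\ext{A,f,(d_i:i\in I)}$ corresponds to the mesh $\mathcal A=((A)_{i\in I},(1-f)_{i,j},(d_i-d_j)_{i,j})$, and similarly $\ext{A',f',(d_i':i\in I')}$ corresponds to $\mathcal A'=((A')_{i\in I'},(1-f')_{i,j},(d_i'-d_j')_{i,j})$. Since both extensions are assumed indecomposable, so are the associated meshes, and Theorem~\ref{thm:isomorphism} applies: the two sums are isomorphic if and only if $\mathcal A$ and $\mathcal A'$ are homologous.

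The core of the proof is then to unpack what homology means in this special case, where all the groups in a given mesh coincide ($A_i=A$ for all $i$) and all the homomorphisms are the single map $1-f$. A homology consists of a bijection $\pi:I\to I'$, isomorphisms $\psi_i:A\to A'$, and constants $e_i\in A'$. I would first show that condition (H1), which here reads $\psi_j(1-f)=(1-f')\psi_i$ for all $i,j$, forces all the $\psi_i$ to be equal to a single isomorphism $\psi$ and that this common map satisfies $\psi(1-f)=(1-f')\psi$; since $\psi$ is an isomorphism this is equivalent to $\psi f=f'\psi$, giving (E1). The equality of the $\psi_i$ follows by taking $i=j$ in (H1) to get $\psi_i(1-f)=(1-f')\psi_i$ for each fixed $i$, and then comparing two indices to conclude $\psi_i=\psi_j$ using invertibility of $1-f$ and $1-f'$ (noting $1-f,\,1-f'$ are automorphisms by the defining property of affine quandles).

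Next I would translate condition (H2). With the identifications above, (H2) becomes
\[
\psi(d_i-d_j)=(d_{\pi(i)}'-d_{\pi(j)}')+(1-f')(e_i)-(1-f')(e_j)
\]
for every $i,j\in I$. Rearranging, this says that the quantity $\psi(d_i)-d_{\pi(i)}'-(1-f')(e_i)$ is independent of $i$; call its common value $a\in A'$. Then $\psi(d_i)-d_{\pi(i)}'=a+(1-f')(e_i)\in a+\im{1-f'}$ for every $i$, which is exactly (E2). Conversely, given $\psi$, $\pi$, and $a$ satisfying (E1) and (E2), for each $i$ I would choose $e_i\in A'$ witnessing the membership in (E2), i.e.\ $\psi(d_i)-d_{\pi(i)}'-a=(1-f')(e_i)$; then (H2) holds for every pair $i,j$ by subtracting the two defining equations, and (H1) holds by (E1). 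This establishes the equivalence between homology of the meshes and the existence of data satisfying (E1)--(E2), completing the proof.

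I do not expect a serious obstacle here; the main subtlety is the bookkeeping in forcing all $\psi_i$ to collapse to a single $\psi$, and in correctly matching the role of the constant $a$ against the per-index witnesses $e_i$. The key observation making (E2) clean is that the homology allows a \emph{single} free constant $a$ precisely because the telescoping difference $c_{i,j}=d_i-d_j$ only constrains differences, so one degree of freedom (the choice of base point) survives as $a$.
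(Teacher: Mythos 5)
Your overall strategy is exactly the paper's: pass to affine meshes via Observation \ref{o:ext}, apply Theorem \ref{thm:isomorphism}, and translate (H1)--(H2) into (E1)--(E2). Your converse direction (set all $\psi_i=\psi$ and pick witnesses $e_i$ from (E2)) is correct and matches the paper. But there is a genuine gap in the forward direction: you claim that (H1) forces all the $\psi_i$ to coincide, ``using invertibility of $1-f$ and $1-f'$,'' and you justify this by saying $1-f$ and $1-f'$ are automorphisms ``by the defining property of affine quandles.'' That is false. In $\aff{A,f}$ it is $f$ that is an automorphism, not $1-f$; correspondingly, in the mesh of Observation \ref{o:ext} the structure maps are $\varphi_{i,j}=1-f$, and condition (M1) only guarantees that $1-\varphi_{i,i}=f$ is an automorphism. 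The map $1-f$ can fail to be injective, and then the deduction $(1-f')\psi_i=(1-f')\psi_j\Rightarrow\psi_i=\psi_j$ breaks down. Concretely, for $f=f'=1$ (e.g.\ the indecomposable extension $\ext{\Z_2^2,1,((0,0),(1,0),(0,1))}$) condition (H1) reads $0=0$ and is vacuous, so a homology may well use genuinely different isomorphisms $\psi_i$. Note also that if $1-f$ \emph{were} always invertible, every extension would decompose as $\aff{A,f}\times\mathrm{Proj}(k)$ by Corollary \ref{cor:latin_orbits}, so the interesting cases are precisely those your argument excludes.

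The gap is repairable without changing your framework, and this is what the paper does: do not try to collapse the $\psi_i$. Fix one index $k\in I$, set $\psi=\psi_k$ and $a=\psi_k(d_k)-d'_{\pi(k)}-(1-f')(e_k)$. Then (E1) is the instance $i=j=k$ of (H1) (since $\psi(1-f)=(1-f')\psi$ is equivalent to $\psi f=f'\psi$), and (E2) follows from the instances of (H2) with second index $j=k$, namely $\psi_k(d_i-d_k)=d'_{\pi(i)}-d'_{\pi(k)}+(1-f')(e_i)-(1-f')(e_k)$, which involve only $\psi_k$ and never the other $\psi_i$. Your ``telescoping'' observation that $\psi(d_i)-d'_{\pi(i)}-(1-f')(e_i)$ is independent of $i$ is the right idea, but it must be run with the single map $\psi_k$ extracted this way rather than with a nonexistent common $\psi$.
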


\begin{proof}
Combining Observation \ref{o:ext} and Theorem \ref{thm:isomorphism}, we see that the two extensions are isomorphic if and only if there exist 
a bijection $\pi:I\to I'$, isomorphisms $\psi_i:A\to A'$, and elements $e_i\in A'$ such that for every $i,j\in I$
\begin{itemize}
	\item[(H1)] $\psi_i(1-f)=(1-f')\psi_j$,
	\item[(H2)] $\psi_j(d_i-d_j)=d_{\pi(i)}'-d_{\pi(j)}'+(1-f')(e_i)-(1-f')(e_j)$.
 \end{itemize}

$(\Rightarrow)$. Consider $\pi,\psi_i$ and $e_i$ as above. Choose $k\in I$ and define $\psi=\psi_k$ and $a=\psi_k(d_k)-d_{\pi(k)}'-(1-f')(e_k)$. Then condition (E1) is a special case of (H1) for $i=j=k$, and condition (E2) follows from (H2) with $j=k$, since 
$\psi(d_i)=\psi(d_k)+d_{\pi(i)}'-d_{\pi(k)}'+(1-f')(e_i)-(1-f')(e_k)=d_{\pi(i)}'+(1-f')(e_i)+a$, and thus
$\psi(d_i)-d_{\pi(i)}'\in a+\im{1-f'}$.

$(\Leftarrow)$. Consider $\pi,\psi,a$ as in the statement of the theorem. For every $i\in I$ define $\psi_i=\psi$ and select $e_i$ such that $\psi(d_i)-d_{\pi(i)}'=a+(1-f')(e_i)$.
Then condition (H1) immediately follows from  (E1) and the fact that $\psi_i=\psi_j$ for every $i,j$, and condition (H2) immediately follows from (E2) applied to both $i$ and $j$ (the two occurrences of $a$ in the expression cancel).
\end{proof}

In Theorem~\ref{thm:affine} we claim that affine quandles have at least one balanced
extension. Actually, all its indecomposable extensions are balanced. (The claim
is not true for decomposable extensions since, e.g., $\ext{\Z_3,1,\{0\}}$ is affine but not balanced.)

\begin{proposition}\label{thm:affbalanced}
Let $Q$ be affine and isomorphic to an indecomposable extension $\ext{A,f,\bar d}$, for some $A,f,\bar d$.
Then $\bar d$ is balanced.
\end{proposition}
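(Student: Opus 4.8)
The plan is to combine two earlier results: the existence of \emph{some} balanced indecomposable representation, furnished by Theorem \ref{thm:affine}, and the isomorphism theorem for indecomposable extensions, Theorem \ref{thm:iso}, which will let me transport balancedness from that representation to the given one.

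Since $Q$ is affine, the implication $(1)\Rightarrow(3)$ of Theorem \ref{thm:affine} provides an indecomposable extension $\ext{A',f',\bar e}$ isomorphic to $Q$ whose tuple $\bar e=(e_j:j\in J)$ is balanced, i.e., a multitransversal of $A'/\im{1-f'}$ of some multiplicity $k$. By hypothesis we also have the indecomposable representation $Q\cong\ext{A,f,\bar d}$ with $\bar d=(d_i:i\in I)$. As both extensions are indecomposable and represent the same quandle, Theorem \ref{thm:iso} yields a bijection $\pi:I\to J$, a group isomorphism $\psi:A\to A'$, and an element $a\in A'$ satisfying (E1) $\psi f=f'\psi$ and (E2) $\psi(d_i)-e_{\pi(i)}\in a+\im{1-f'}$ for every $i\in I$.

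Next I would pass to the quotients modulo the images. From (E1) we get $\psi(1-f)=(1-f')\psi$, so $\psi$ carries $\im{1-f}$ onto $\im{1-f'}$ and induces a group isomorphism $\bar\psi:A/\im{1-f}\to A'/\im{1-f'}$. Rewriting (E2), $\bar\psi$ sends the coset of $d_i$ to the coset of $e_{\pi(i)}+a$. Now fix a coset $C$ of $\im{1-f}$ and count: using that $\bar\psi$ is a bijection together with the previous identity, the number of indices $i$ with $d_i\in C$ equals the number of $i$ with $e_{\pi(i)}\in\bar\psi(C)-a$, which in turn equals the number of $j$ with $e_j\in\bar\psi(C)-a$, because $\pi$ is a bijection. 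The set $\bar\psi(C)-a$ is again a single coset of $\im{1-f'}$, so balancedness of $\bar e$ forces this count to be $k$, independent of $C$. Hence each coset of $\im{1-f}$ contains exactly $k$ of the $d_i$, so $\bar d$ is a multitransversal of $A/\im{1-f}$, that is, balanced.

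I expect the only delicate point to be the bookkeeping in the counting step: one must confirm that both $\bar\psi$ and translation by $-a$ act as bijections on the respective sets of cosets, so that as $C$ ranges over all cosets of $\im{1-f}$ the coset $\bar\psi(C)-a$ ranges over all cosets of $\im{1-f'}$, and the constant value $k$ is genuinely transported back to every $C$. All remaining steps are direct applications of the cited theorems.
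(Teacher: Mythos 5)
Your proposal is correct and follows essentially the same route as the paper: invoke Theorem \ref{thm:affine}(3) for a balanced indecomposable representation, apply the isomorphism theorem (Theorem \ref{thm:iso}) to the two indecomposable extensions, and transport the multitransversal property through $\psi$, the translation by $a$, and the adjustment within cosets of $\im{1-f}$. Your coset-counting step just makes explicit the bookkeeping that the paper's proof passes over more quickly.
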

\begin{proof}
By Theorem~\ref{thm:affine}, there exists an indecomposable extension $\ext{A',f',(d_i':i\in I')}$ isomorphic to $Q$
with $\bar d'$ balanced.
By Theorem \ref{thm:iso}, there exist a bijection $\pi:I'\to I$, a group isomorphism $\psi:A'\to A$, and an element $a\in A$ such that
conditions (E1) and (E2) hold. Since $\bar d'=(d_i':i\in I)$ is a balanced tuple of elements $A$, then $(\psi(d'_i):i\in I)$ is balanced,
i.e., it is a multitransversal of $A/\im{1-f}$. This implies that $(\psi(d_i')-a:i\in I')$ is also a multitransversal of $A/\im{1-f}$.
Now $\psi(d_i')-a-d_{\pi(i)}$ lies in $A/\im{1-f}$, for each $i\in I'$, and therefore
$(d_{j}:j\in I)$ is a multitransversal of $A/\im{1-f}$, hence balanced.
\end{proof}

Theorem \ref{thm:iso} now simplifies a lot when considering affine quandles.

\begin{corollary}\label{cor:isothm_affineX}
 Let $\ext{A,f,(d_i:i\in I)}$ and $\ext{A',f',(d_i':i\in I')}$ be two indecomposable extensions
 such that $\bar d$ and $\bar d'$ are balanced.
 The extensions are isomorphic if and only if there is an isomorphism $\psi:A\to A'$ such that
\begin{itemize}
	\item[(E1)] $\psi f=f'\psi$,
	\item[(E2')] the multiplicities of~$\bar d$, $\bar d'$ are equal.
 \end{itemize}
\end{corollary}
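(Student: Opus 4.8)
The plan is to derive Corollary~\ref{cor:isothm_affineX} from Theorem~\ref{thm:iso} by showing that, once both tuples are assumed to be \emph{balanced} multitransversals of their respective cosets, the seemingly weaker condition (E2$'$) on equal multiplicities is in fact equivalent to the existence of a bijection $\pi$ and element $a$ satisfying the original (E2). The direction from the full isomorphism condition to (E1)+(E2$'$) should be the easy one: given $\pi,\psi,a$ from Theorem~\ref{thm:iso}, I would argue that $\psi$ carries $\im{1-f}$ onto $\im{1-f'}$ (this is forced by (E1), since $\psi(1-f)=(1-f')\psi$), so $\psi$ induces an isomorphism $A/\im{1-f}\simeq A'/\im{1-f'}$. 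The bijection $\pi$ then matches each element of $\bar d$ to an element of $\bar d'$ lying, after translating by the fixed $a$ and applying $\psi^{-1}$, in the \emph{same} coset; since a bijection between two multitransversals of the same total size over corresponding coset systems must preserve multiplicity, the multiplicities of $\bar d$ and $\bar d'$ coincide.

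The more substantial direction is the converse: assuming only (E1) and equal multiplicities, I must \emph{construct} the bijection $\pi$ and the element $a$ required by Theorem~\ref{thm:iso}. Here is where I would spend the real effort. Given the isomorphism $\psi$ with $\psi f=f'\psi$, the induced map $\bar\psi:A/\im{1-f}\to A'/\im{1-f'}$ is a bijection of the two coset systems. Because $\bar d$ is a multitransversal of $A/\im{1-f}$ with some multiplicity $k$, its image $(\psi(d_i):i\in I)$ is a multitransversal of $A'/\im{1-f'}$ of the same multiplicity $k$; by hypothesis $\bar d'$ is a multitransversal of $A'/\im{1-f'}$ of multiplicity $k$ as well. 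Taking $a=0$, both $(\psi(d_i):i\in I)$ and $\bar d'$ distribute the same number $k$ of representatives into each coset of $\im{1-f'}$. Hence within each coset I can biject the $\psi$-images against the corresponding $d'$-entries, and assembling these coset-wise bijections yields a global $\pi:I\to I'$ with $\psi(d_i)\equiv d'_{\pi(i)}\pmod{\im{1-f'}}$, i.e.\ (E2) holds with $a=0$.

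The main obstacle I anticipate is bookkeeping the indexing sets correctly: $I$ and $I'$ are abstract index sets, not the cosets themselves, and the same element of $A'$ may be repeated in the tuple $\bar d'$, so I must be careful to treat the $d_i$ as a \emph{multiset of indexed entries} rather than a set of values. The equality of multiplicities guarantees $|I|=|I'|$ (both equal $k$ times the number of cosets), which is exactly what makes the coset-wise bijections patch into a global bijection $\pi$; I would state this size-count explicitly. A second subtle point worth flagging is that indecomposability must be invoked (via the hypotheses of Theorem~\ref{thm:iso}) to ensure the fibers are the orbits, so that $\im{1-f}$ and $\im{1-f'}$ really govern the coset structure; with that in hand, the argument reduces to a clean matching of multitransversals of equal multiplicity, and the choice $a=0$ keeps condition (E2) in its simplest form.
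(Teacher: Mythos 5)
Your proposal is correct and follows essentially the same route as the paper: both directions reduce to showing that, for a fixed $\psi$ satisfying (E1) (so that $\psi$ carries $\im{1-f}$ onto $\im{1-f'}$ and induces a bijection of coset systems), condition (E2$'$) is equivalent to the existence of $\pi$ and $a$ satisfying (E2), with the converse direction handled exactly as in the paper by taking $a=0$ and assembling coset-wise bijections between the two multitransversals of equal multiplicity.
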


\begin{proof}
 We need to prove that (E2') holds if and only if there exist $\pi$ and $a$ satisfying (E2). 
 
 ($\Leftarrow$) Choose a coset of $\im{1-f}$ and consider the subset $J\subseteq I$ of all indices $j$ such that $d_j$ belongs to this coset.
 Then (E2) implies that all $d'_{\pi(j)}$, $j\in J$, belong to the same coset of $\im{1-f'}$, hence the multiplicity of $\bar d'$ must be greater or equal. A reverse argument shows that they are equal.

 ($\Rightarrow$) Let $a=0$ and take $\pi$ such that $d_i\in u+\im{1-f}$ if and only if $d'_{\pi(i)}\in\psi(u)+\im{1-f'}$. This is possible since $\bar d$, $\bar d'$ are multitransversals of block systems with the same numbers of blocks and the same multiplicity. Now, $\psi(d_i)\in\psi(u+\im{1-f})=\psi(u)+\im{1-f'}$, and thus $\psi(d_i)-d'_{\pi(i)}\in\im{1-f'}$.
\end{proof}

As a byproduct, we obtain Hou's isomorphism theorem for affine quandles.

\begin{corollary}\cite[Theorem 3.1]{Hou-aut}\label{cor:isothm_affine}
Two affine quandles $\aff{A,f}$, $\aff{B,g}$ are isomorphic if and only if there exists an isomorphism $\psi:\im{1-f}\to\im{1-g}$ such that $\psi f(u)=g\psi(u)$ for every $u\in\im{1-f}$, and
\[|\Ker{1-f}/\Ker{1-f}\cap\im{1-f}|=|\Ker{1-g}/\Ker{1-g}\cap\im{1-g}|.\]
\end{corollary}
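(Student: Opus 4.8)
The plan is to derive Corollary \ref{cor:isothm_affine} from Corollary \ref{cor:isothm_affineX} by finding, for each affine quandle, a \emph{canonical} indecomposable balanced extension whose underlying group is $\im{1-f}$. Recall from Example \ref{ex:affine_ext} that $\aff{A,f}$ is isomorphic to $\ext{\im{1-f},\,f|_{\im{1-f}},\,((1-f)(a):a\in T)}$, where $T$ is a transversal of the orbit decomposition $A/\im{1-f}$, and that this extension is indecomposable. I would first record that $f$ restricts to an automorphism of $\im{1-f}$ (since $f$ and $1-f$ commute, $f$ preserves $\im{1-f}$, and $f$ is bijective), so $\bar f:=f|_{\im{1-f}}$ makes sense as the automorphism in the extension. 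The essential observation is then that this canonical extension is \emph{balanced}: by Lemma \ref{l:mtr2} applied to $G=A$ and $\varphi=1-f$, the multiset $((1-f)(a):a\in T)$ is a multitransversal of $\im{1-f}/\im{(1-f)^2}$, and since the image of $\bar f$ is exactly $\im{(1-f)^2}$ inside $\im{1-f}$, this says precisely that $\bar d$ is a multitransversal of $\im{1-f}/\im{1-\bar f}$, i.e.\ balanced. Moreover Lemma \ref{l:mtr2} identifies the multiplicity as $|\Ker{1-f}/\Ker{1-f}\cap\im{1-f}|$, which is the quantity appearing in the statement.

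Having pinned down the canonical representation, the strategy is: write $\aff{A,f}\simeq\ext{\im{1-f},\bar f,\bar d}$ and $\aff{B,g}\simeq\ext{\im{1-g},\bar g,\bar d'}$ as above, both indecomposable and balanced, and simply invoke Corollary \ref{cor:isothm_affineX}. That corollary says the two extensions are isomorphic if and only if there is a group isomorphism $\psi:\im{1-f}\to\im{1-g}$ with $\psi\bar f=\bar g\psi$ (condition (E1)) and with equal multiplicities (condition (E2')). Translating back: (E1) is exactly the stated intertwining $\psi f(u)=g\psi(u)$ for $u\in\im{1-f}$ (since on $\im{1-f}$ we have $\bar f=f$ and on $\im{1-g}$ we have $\bar g=g$), and (E2') is exactly the stated equality of the indices $|\Ker{1-f}/\Ker{1-f}\cap\im{1-f}|=|\Ker{1-g}/\Ker{1-g}\cap\im{1-g}|$ via the multiplicity computation of Lemma \ref{l:mtr2}. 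So the proof reduces to matching notation once the canonical extensions are in place.

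I expect the main obstacle to be the bookkeeping in identifying $\im{1-\bar f}$ with $\im{(1-f)^2}$ and verifying that the restricted map $\bar f$ has the claimed image, so that Lemma \ref{l:mtr2}'s conclusion about the multitransversal of $\im{1-f}/\im{(1-f)^2}$ is genuinely the balancedness statement for $\bar d$. Concretely, for $u\in\im{1-f}$ one has $(1-\bar f)(u)=(1-f)(u)\in\im{(1-f)^2}$, and conversely every element of $\im{(1-f)^2}$ arises this way; this is a short check but must be done carefully since the ambient group for the extension is $\im{1-f}$, not $A$. The other point requiring a line of care is that Corollary \ref{cor:isothm_affineX} presumes both extensions are indecomposable, which is guaranteed by the second bullet of Example \ref{ex:affine_ext}. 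Once these identifications are made, the logical equivalence in Hou's theorem falls out of Corollary \ref{cor:isothm_affineX} with no further computation.
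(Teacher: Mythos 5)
Your proposal is correct and follows essentially the same route as the paper: represent each affine quandle via the canonical indecomposable extension of Example \ref{ex:affine_ext}, apply Corollary \ref{cor:isothm_affineX}, and use Lemma \ref{l:mtr2} to identify the multiplicity with $|\Ker{1-f}/\Ker{1-f}\cap\im{1-f}|$. Your explicit verification that the extension is balanced (via the identification $\im{1-\bar f}=\im{(1-f)^2}$) is a detail the paper leaves implicit, but the argument is the same.
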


\begin{proof}
According to Example \ref{ex:affine_ext}, $\aff{A,f}\simeq\ext{(1-f)(A),f,((1-f)(t):t\in T)}$ where $T$ is a transversal of $A/\im{1-f}$, and $\aff{B,g}\simeq\ext{(1-g)(B),g,((1-b)(s):s\in S)}$ where $S$ is a transversal of $B/\im{1-g}$.
 Now, Corollary \ref{cor:isothm_affineX} applies: the conditions on $\psi$ are identical, and according to Lemma~\ref{l:mtr2}, $|\Ker{1-f}/\Ker{1-f}\cap\im{1-f}|$ equals the multiplicity of $\{\{(1-f)(t):t\in T\}\}$.
\end{proof}

\section{Enumeration}\label{sec:enum}

First, we outline an enumeration procedure for quasi-affine quandles of given order $n$. Theorem \ref{th:Isom_Ab} suggests to start with a choice of
\begin{itemize}
	\item an index set $I$ of order $k$ dividing $n$ (then $k$ will be the number of orbits),
	\item an abelian group $A$ of order $n/k$ up to isomorphism (then $n/k$ will be the orbit size),
	\item its automorphism $f$ up to conjugacy (the orbit subquandles will be $\simeq\aff{A,f}$).
\end{itemize}	
The number of quandles of the form $\ext{A,f,(d_1,\dots,d_k)}$ will be denoted by $\varepsilon(A,f,k)$, and we will often shortcut $\varepsilon_{n,k}=\varepsilon(\Z_n,1,k)$.
In some cases, the enumeration is easy to do with ad hoc arguments.

\begin{proposition}\label{p:enum_p}
There are exactly $p-1$ quasi-affine quandles of order $p$, $p$ prime, up to isomorphism. All of them are affine.
\end{proposition}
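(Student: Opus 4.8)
The plan is to enumerate quasi-affine quandles of prime order $p$ directly via the semiregular extension representation. By Theorem~\ref{thm:quasi-affine}, every quasi-affine quandle of order $p$ is isomorphic to an indecomposable extension $\ext{A,f,\bar d}$ where $|A|\cdot k=p$ and $k=|I|$ is the number of orbits. Since $p$ is prime, there are only two possibilities: either $|A|=1$ and $k=p$, or $|A|=p$ and $k=1$. First I would dispose of the first case: if $|A|=\Z_1$, the extension is forced to be the projection quandle $\mathrm{Proj}(p)$, which has $p$ singleton orbits, so indecomposability holds trivially and this yields exactly one quandle.

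Next I would handle the main case $|A|=p$, $k=1$. Here the single fiber is the whole quandle, $\bar d=(d_1)$ is irrelevant (only differences $d_i-d_j$ enter the operation, and there is only one index), and so $\ext{A,f,(d_1)}=\aff{A,f}=\aff{\Z_p,f}$. Thus I must count affine quandles $\aff{\Z_p,f}$ as $f$ ranges over automorphisms of $\Z_p$, up to isomorphism. The automorphism group of $\Z_p$ is $\Z_p^\times$, cyclic of order $p-1$, so $f$ corresponds to multiplication by some $c\in\{1,2,\dots,p-1\}$. The indecomposability condition from Observation~\ref{o:ext} requires $A=\Z_p$ to be generated by $\im{1-f}\cup\{d_i-d_j\}=\im{1-f}$; when $c\neq 1$, $1-f$ is invertible so $\im{1-f}=\Z_p$ and the extension is indecomposable, while $c=1$ gives the projection quandle already counted in the first case. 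So the genuinely new affine quandles correspond to $c\in\{2,\dots,p-1\}$, giving $p-2$ candidates, and together with the projection quandle this totals $p-1$ — provided no two of these candidates coincide up to isomorphism.

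The main obstacle, and the only substantive step, is proving that distinct values of $c$ give non-isomorphic quandles. I would invoke Corollary~\ref{cor:isothm_affine} (or the balanced version, Corollary~\ref{cor:isothm_affineX}): $\aff{\Z_p,f}\simeq\aff{\Z_p,g}$ with $f$ being multiplication by $c$ and $g$ by $c'$ requires an isomorphism $\psi:\im{1-f}\to\im{1-g}$ intertwining the actions. For $c,c'\neq 1$ both images are all of $\Z_p$, and $\psi\in\aut{\Z_p}=\Z_p^\times$ is itself multiplication by some unit; the intertwining condition $\psi f=g\psi$ becomes $\psi c=c'\psi$ in $\Z_p^\times$, i.e.\ $c=c'$ since $\Z_p^\times$ is abelian. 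Hence the candidates are pairwise non-isomorphic. I would also note the Kearnes-style count: affine quandles over $\Z_p$ with $f=$ multiplication by $c$ are classified exactly by $c\in\Z_p^\times$ up to this equivalence, which here is the identity relation. Finally, all quandles produced are affine by construction (the case $k=1$ gives $\aff{\Z_p,f}$ outright, and $\mathrm{Proj}(p)=\aff{\Z_p,1}$ is affine), confirming the last sentence of the statement. Assembling the counts, $1+(p-2)=p-1$, completes the proof.
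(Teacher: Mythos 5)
Your proposal is correct and follows essentially the same route as the paper: split into the cases $k=p$ (the projection quandle) and $k=1$ (the extensions $\ext{\Z_p,f,(d_1)}=\aff{\Z_p,f}$ with $f\neq 1$ for indecomposability), yielding $1+(p-2)=p-1$. The only difference is that you make explicit, via Corollary~\ref{cor:isothm_affine}, the pairwise non-isomorphism of the $p-2$ connected affine quandles, which the paper leaves implicit in its ``automorphism up to conjugacy'' enumeration step (trivial here since $\aut{\Z_p}$ is abelian).
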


\begin{proof}
We will follow the procedure outlined above.
We can choose $k=1$ or $k=p$. 
For $k=1$, we have $A=\Z_p$ and $f(x)=ax$ for some $a\in\{1,\ldots,p-1\}$. The extension is indecomposable if and only if $a\neq1$. The choice of $\bar d=(d_1)$ is irrelevant and we obtain $p-2$ connected affine quandles.
For $k=p$, we have $A=\{0\}$ the trivial group, $f=1$, and there is only one choice $\bar d=(0,\dots,0)$. We obtain one affine (projection) quandle.
\end{proof}

\begin{proposition}\label{p:enum_pq}
Let $p,q$ be distinct primes. Up to isomorphism, there are exactly
\begin{enumerate}
	\item $2p^2-2p-2+\varepsilon_{p,p}$ quasi-affine quandles of order $p^2$,
	\item $pq-p-q+1+\varepsilon_{p,q}+\varepsilon_{q,p}$ quasi-affine quandles of order $pq$.
\end{enumerate}
\end{proposition}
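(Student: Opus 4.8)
The plan is to run the enumeration procedure outlined before Proposition~\ref{p:enum_p}, stratifying the quasi-affine quandles of order $n$ by $k$, the number of orbits, which must divide $n$. By Theorem~\ref{thm:quasi-affine} every quasi-affine quandle is an indecomposable extension $\ext{A,f,\bar d}$ in which $k=|I|$ is exactly the number of orbits and $|A|=n/k$ is the common orbit size; moreover $k$ and the isomorphism type of the orbit subquandles $\aff{A,f}$ are invariants of the quandle, so distinct strata and distinct $(A,f)$ never produce the same quandle, and within one stratum the number of isomorphism classes is governed by Theorem~\ref{thm:iso} and Corollary~\ref{cor:isothm_affineX}. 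For $n=p^2$ the relevant values are $k\in\{1,p,p^2\}$ with $|A|\in\{p^2,p,1\}$; for $n=pq$ they are $k\in\{1,p,q,pq\}$ with $|A|\in\{pq,q,p,1\}$. Since $p,q$ are prime, the only abelian groups occurring are $\Z_{p^2}$, $\Z_p^2$, $\Z_{pq}\cong\Z_p\times\Z_q$, $\Z_p$, $\Z_q$ and the trivial group, which keeps the bookkeeping finite.

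Two kinds of strata are routine. First, whenever $A\in\{\Z_p,\Z_q\}$ and $f$ is multiplication by some $a\neq1$, the map $1-f$ is onto, so by Corollary~\ref{cor:latin_orbits} the extension is $\aff{A,f}\times\mathrm{Proj}(k)$ regardless of $\bar d$; this yields exactly one class per such $a$, and distinct $a$ give distinct quandles because the connected factor $\aff{A,f}$ differs (Corollary~\ref{cor:isothm_affine}). These strata therefore contribute $p-2$, resp.\ $q-2$, classes. Second, the strata with $f=1$ contribute precisely the terms $\varepsilon_{q,p}$, $\varepsilon_{p,q}$, $\varepsilon_{p,p}$, where $\varepsilon(A,f,k)$ counts \emph{indecomposable} extensions as in the enumeration outline (so $k$ really is the number of orbits); the unique constant tuple gives the projection quandle, which is excluded here and counted once in the top stratum $k=n$. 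That top stratum, with $A$ trivial, contributes exactly $\mathrm{Proj}(n)$, one class.

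The real content is the connected stratum $k=1$. For $A=\Z_{pq}$ (and likewise $\Z_{p^2}$) an automorphism is multiplication by a unit $a$, connectedness forces $1-a$ to be a unit as well, and since the automorphism group is abelian, distinct such $a$ give non-isomorphic quandles by Corollary~\ref{cor:isothm_affine}. Counting these units by the Chinese Remainder Theorem gives $(p-2)(q-2)$ for $\Z_{pq}$ and $p(p-2)$ for $\Z_{p^2}$. The hard part is $A=\Z_p^2$: here, by Hou's theorem (Corollary~\ref{cor:isothm_affine}), connected affine quandles correspond to conjugacy classes of $f\in GL_2(\mathbb{F}_p)$ for which $1$ is not an eigenvalue, i.e.\ $1-f$ is invertible. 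I would count these by subtracting, from the $p^2-1$ total conjugacy classes of $GL_2(\mathbb{F}_p)$, those having $1$ as an eigenvalue: the identity class, the unipotent Jordan class, and the $p-2$ split classes with eigenvalues $\{1,\mu\}$ for $\mu\neq 0,1$. This removes $p$ classes and leaves $p^2-p-1$. This invocation of the conjugacy-class structure of $GL_2(\mathbb{F}_p)$ is the one step I expect to be the main obstacle.

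Finally I would assemble the strata. For $n=p^2$ the connected stratum gives $(p^2-2p)+(p^2-p-1)=2p^2-3p-1$, the stratum $k=p$ gives $\varepsilon_{p,p}+(p-2)$, and the stratum $k=p^2$ gives $1$, summing to $2p^2-2p-2+\varepsilon_{p,p}$. For $n=pq$ the connected stratum gives $(p-2)(q-2)$, the strata $k=p$ and $k=q$ give $\varepsilon_{q,p}+(q-2)$ and $\varepsilon_{p,q}+(p-2)$, and the stratum $k=pq$ gives $1$, summing to $pq-p-q+1+\varepsilon_{p,q}+\varepsilon_{q,p}$. As a consistency check, taking $p=2$ recovers the three quandles of order~$4$ of Example~\ref{Ex:4} (where $\varepsilon_{2,2}=1$), and $\{p,q\}=\{2,3\}$ recovers the four quandles of order~$6$ of Example~\ref{Ex:6} (where $\varepsilon_{2,3}=\varepsilon_{3,2}=1$).
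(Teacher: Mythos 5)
Your proof is correct and follows essentially the same route as the paper: stratify by the number of orbits $k$, use Corollary~\ref{cor:latin_orbits} for the strata where $1-f$ is onto, the definition of $\varepsilon$ for the $f=1$ strata, and Theorem~\ref{thm:iso} to keep the strata disjoint. The only divergence is in the connected stratum of order $p^2$, where the paper simply cites Hou's count of $2p^2-3p-1$ connected affine quandles, while you rederive it via units of $\Z_{p^2}$ and conjugacy classes of $GL_2(\mathbb{F}_p)$ without eigenvalue $1$ --- your count ($p(p-2)+(p^2-p-1)$) is correct and makes that step self-contained.
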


\begin{proof}
We will follow the procedure outlined above.

(1) For $k=1$, we have $|A|=p^2$ and we obtain precisely the connected affine quandles of order $p^2$. It was determined by Hou \cite{Hou} that there are $2p^2-3p-1$ such quandles, up to isomorphism.

For $k=p$, we have $A=\Z_p$. \emph{Subcase 1:} $f(x)=ax$ for $a\in\{2,\ldots,p-1\}$. Then the orbits are latin subquandles, and according to Corollary \ref{cor:latin_orbits}, $Q=\aff{A,f}\times\mathrm{Proj}(p)$; this way, we obtain $p-2$ affine quandles. 
\emph{Subcase 2:} $f=1$. This case contributes $\varepsilon_{p,p}$ quandles.

For $k=p^2$, we have $A=\{0\}$ the trivial group, $f=1$, and there is only one choice $\bar d=(0,\dots,0)$. We obtain one affine (projection) quandle.

(2) For $k=1$, we have $A=\Z_p\times\Z_q$ and we obtain precisely the products of connected affine quandles of orders $p$ and $q$, that is, $(p-2)(q-2)$ quandles.

For $k=q$, we have $A=\Z_p$. The discussion is exactly as in the second case of part (1), obtaining $p-2$ quandles of the form $Q=\aff{A,f}\times\mathrm{Proj}(q)$ and $\varepsilon_{q,p}$ quandles with $f=1$.
The case $k=p$ in analogical, with the role of $p$ and $q$ switched, contributing $q-2+\varepsilon_{p,q}$ quandles.

The case $k=pq$ is exactly as the last case of part (1).
\end{proof}

Now, we will address how to calculate the numbers $\varepsilon(A,f,k)$.
Let $Q=\ext{A,f,(d_1,\dots,d_k)}$ and $Q'=\ext{A,f,(d'_1,\dots,d'_k)}$ be two indecomposable semiregular extensions. Theorem \ref{thm:iso} says that
$Q\simeq Q'$ if and only if $\bar{d'}$ is obtained from $\bar{d}$ using the following four types of transformations:
\begin{itemize}
\item[(T1)] the sequence can be permuted;
\item[(T2)] any element of the sequence can be replaced by an element from the same coset of~$\im{1-f}$;
\item[(T3)] all elements of the sequence can be translated (simultaneously) by an element of $A$ (i.e., we apply the mapping $x\mapsto x+u$ on every element of the sequence);
\item[(T4)] all elements of the sequence can be mapped (simultaneously) by an automorphism of $A$ which commutes with $f$, i.e., by an element of the centralizer $C_{\aut A}(f)$.
\end{itemize}

An application of the following proposition allows to reduce many enumeration problems to the case $f=1$. 
Observe that, for $\psi\in C_{\aut A}(f)$, the mapping $\psi/\im{1-f}$ defined by $a+\im{1-f}\mapsto \psi(a)+\im{1-f}$ is a well defined automorphism of $A/\im{1-f}$.

\begin{proposition}\label{p:enum_red}
For every $A,f,k$, \[\varepsilon(A,f,k)\leq\varepsilon(A/\im{1-f},1,k).\]
Moreover, equality holds if \[\aut{A/\im{1-f}}=\{\psi/\im{1-f}:\ \psi\in C_{\aut A}(f)\}.\]
In particular, it holds for every $A$ cyclic.
\end{proposition}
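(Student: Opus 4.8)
The plan is to realise both quantities as counts of tuples up to the moves (T1)--(T4) and to compare them through the quotient map $q\colon A\to B:=A/\im{1-f}$. By Theorem~\ref{thm:iso}, $\varepsilon(A,f,k)$ counts the $k$-tuples over $A$ giving an indecomposable extension, taken up to (T1)--(T4); on the target side $\im{1-1}=\{0\}$, so (T2) is vacuous and $\varepsilon(B,1,k)$ counts indecomposable $k$-tuples over $B$ up to a permutation (T1$'$), a global translation (T3$'$), and a global automorphism from $\aut B$ (T4$'$). First I would record that applying $q$ coordinatewise, $\bar d\mapsto\bar{\bar d}=(d_i+\im{1-f})_i$, preserves indecomposability in both directions: by Observation~\ref{o:ext} it sends the generating set $\im{1-f}\cup\{d_i-d_j\}$ of $A$ onto the generating set $\{\bar d_i-\bar d_j\}$ of $B$. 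Tracking the moves along $q$, (T1) becomes (T1$'$), (T2) becomes the identity, (T3) becomes (T3$'$), and, for $\psi\in C_{\aut A}(f)$, (T4) becomes the automorphism $\psi/\im{1-f}\in\aut B$. Each descended move is again an isomorphism move for $(B,1)$, so $\Psi\colon[\ext{A,f,\bar d}]\mapsto[\ext{B,1,\bar{\bar d}}]$ is a well-defined map on isomorphism classes.

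The inequality then amounts to establishing the injectivity of $\Psi$. Suppose $\bar{\bar d}$ and $\bar{\bar d'}$ are identified by some composition of (T1$'$), (T3$'$), (T4$'$). The permutation lifts verbatim to (T1), and a global translation by $\bar u\in B$ lifts by choosing any $u\in q^{-1}(\bar u)$ and applying (T3); after these liftings the two $A$-tuples sit over the same cosets, so (T2), applied coordinatewise, adjusts them to agree. Thus the whole comparison reduces to the single automorphism used in (T4$'$): the two classes coincide as soon as this automorphism of $B$ can be realised as $\psi/\im{1-f}$ for some $\psi\in C_{\aut A}(f)$ (the permutation freedom (T1$'$) may also be exploited to replace the automorphism by a liftable one). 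This realisation step is the main obstacle, and it is precisely the point at which the hypothesis is needed.

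When $\aut B=\{\psi/\im{1-f}:\psi\in C_{\aut A}(f)\}$ every such automorphism lifts, $\Psi$ becomes bijective, and the two counts coincide. For cyclic $A=\Z_n$ I would verify this condition directly: writing $f$ as multiplication by $a$, we get $\im{1-f}=m\Z_n$ with $m=\gcd(n,1-a)$ and $B\cong\Z_m$; since $\aut{\Z_n}$ is abelian it equals $C_{\aut A}(f)$, and reduction modulo $m$ carries $(\Z_n)^{\ast}$ onto $(\Z_m)^{\ast}=\aut B$ because $m\mid n$. Hence the lifting is automatic, and equality holds for every cyclic $A$.
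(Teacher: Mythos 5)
Your reduction of both counts to orbit counts of tuples over $B=A/\im{1-f}$ is the same as the paper's, and your treatment of the equality part is correct and in substance identical to the paper's: under the lifting hypothesis the descended moves generate the full group $B\rtimes\aut B$, and in the cyclic case the needed lifting is the surjectivity of reduction $(\Z_n)^{\ast}\to(\Z_m)^{\ast}$ for $m\mid n$ (the paper realises this by taking $\psi(x)=ux$ for a representative $u$ of $\rho(1+\im{1-f})$, implicitly chosen coprime to $n$ --- the same standard fact you invoke). Making the transfer of indecomposability along $q$ explicit via Observation \ref{o:ext} is a detail the paper leaves implicit, and it is done correctly.

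The genuine gap is the unconditional inequality. You reduce it to injectivity of the descent map $\Psi$ and then concede that the required lifting of (T4$'$) is exactly the hypothesis of the ``moreover'' clause --- so outside the equality case your write-up proves nothing, and in fact injectivity of $\Psi$ is false in general. What holds unconditionally is the \emph{reverse} inequality $\varepsilon(A,f,k)\ge\varepsilon(A/\im{1-f},1,k)$, which is what the paper's own proof actually derives (``bigger groups have less orbits''): both numbers count orbits of the same set of indecomposable $B$-tuples under the nested groups $B\rtimes\{\psi/\im{1-f}:\psi\in C_{\aut A}(f)\}\le B\rtimes\aut{A/\im{1-f}}$; equivalently, your $\Psi$ is always \emph{surjective}, since any lift along $q$ of an indecomposable $B$-tuple is an indecomposable $A$-tuple. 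The printed ``$\le$'' in the statement is a typo, inconsistent with the paper's proof and genuinely false: for the paper's own example $A=\Z_2^3$ with the displayed $f$, the lifted automorphism group of $B\cong\Z_2^2$ has order $2$ inside $\aut B$ of order $6$, and for $k=4$ one counts $3$ orbits under the smaller group versus $2$ under the holomorph, i.e.\ $\varepsilon(\Z_2^3,f,4)=3>2=\varepsilon(\Z_2^2,1,4)$. So by pursuing injectivity you were attempting to prove the false half of a bijectivity statement; noting the (easy) surjectivity of $\Psi$ instead would have recovered the correct unconditional inequality and, combined with your lifting argument, the equality criterion, exactly as in the paper.
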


\begin{proof}
First, consider $\varepsilon(A,f,k)$. Choose a transversal $T$ to $A/\im{1-f}$. Due to (T2), we can assume that all $d_i\in T$. Now (T1) say that we count such tuples $\bar d$ up to the order of their entries.
By (T3) we count up to translation by an element of $A$ modulo $\im{1-f}$, i.e., applying $x\mapsto x+u$, if $x+u\not\in T$, it is replaced by a representative of its coset.
And by (T4) we count up to  application of an automorphism $\psi\in C_{\aut A}(f)$ modulo $\im{1-f}$, i.e., if $\psi(x)\not\in T$, it is replaced by a representative of its coset.

Next, consider $\varepsilon(A/\im{1-f},1,k)$. Here (T2) is trivial, and (T1), (T3), (T4) says that we count tuples $\bar d$ up to the order of their entries, translation by an element of $A/\im{1-f}$ and application of an automorphism $\psi\in\aut{A/\im{1-f}}$.

Indeed, $\aut{A/\im{1-f}}\supseteq\{\psi/\im{1-f}:\ \psi\in C_{\aut A}(f)\}$ for any $A,f$. Since bigger groups have less orbits, we obtain the inequality. If the two sets are equal, we obtain equality. For cyclic groups, the two sets are always equal: given an automorphism $\rho$ of $A/\im{1-f}$, choose $u\in A$ such that $\rho(1+\im{1-f})=u+\im{1-f}$ and define $\psi(x)=ux$. Indeed, $\psi\in C_{\aut A}(f)=\aut A$ and $\psi/\im{1-f}=\rho$.
\end{proof}

In particular, if $1-f$ is onto, we obtain that $\varepsilon(A,f,k)=1$. This is in accordance with Corollary \ref{cor:latin_orbits} which says that this one quandle is the direct product $\aff{A,f}\times\mathrm{Proj}(k)$.

Automorphisms of $\aut{A/\im{1-f}}$ are not always quotients of automorphisms from $C_{\aut A}(f)$. For example, if 
\[A=\Z_2^3 \text{ and }f=\left(\begin{smallmatrix}1&0&1\\0&1&0\\0&0&1\end{smallmatrix}\right),\]
then $\im{1-f}=\langle(1,0,0)\rangle$. Hence $|\aut{A/\im{1-f}}|=6$, but the centralizer $C_{\aut A}(f)$ is the subgroup of all upper triangular matrices with 1s on the diagonal and $|\{g/\im{1-f}:g\in C_{\aut A}(f)\}|=2$.

In the rest of the paper, we will address the case $f=1$. The direct approach outlined above is good if $k$ is very small.

\begin{proposition}\label{p:enum_2}
$\varepsilon(A,1,2)=1$ if $A$ is cyclic, and $\varepsilon(A,1,2)=0$ otherwise.
\end{proposition}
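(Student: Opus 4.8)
The plan is to combine the indecomposability criterion of Observation~\ref{o:ext} with the four transformations (T1)--(T4) that govern isomorphism of indecomposable extensions. Throughout, recall that $\varepsilon(A,1,2)$ counts \emph{indecomposable} extensions $\ext{A,1,(d_1,d_2)}$ up to isomorphism, so the first task is to decide which choices of $(d_1,d_2)$ even give an indecomposable extension.

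First I would settle the indecomposability question. Since $f=1$ forces $\im{1-f}=\{0\}$, Observation~\ref{o:ext} says that $\ext{A,1,(d_1,d_2)}$ is indecomposable precisely when $A$ is generated by $\{d_i-d_j:i,j\in\{1,2\}\}=\{0,\pm(d_1-d_2)\}$, i.e.\ exactly when $A=\langle d_1-d_2\rangle$. A group generated by a single element is cyclic, so no pair $(d_1,d_2)$ can generate a non-cyclic $A$; this immediately yields $\varepsilon(A,1,2)=0$ whenever $A$ is not cyclic.

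For $A$ cyclic I would then count the indecomposable extensions modulo (T1)--(T4). Because $\im{1-f}=\{0\}$, transformation (T2) is vacuous. Using (T3) I translate every admissible pair by $-d_2$ into the normal form $(c,0)$, where $c=d_1-d_2$ is a generator of $A$; at least one such pair exists since $A$, being cyclic, has a generator, so $\varepsilon(A,1,2)\geq 1$. Now $C_{\aut A}(1)=\aut A$, and for a cyclic group the automorphisms are exactly the multiplications by units, which act transitively on the generators. Hence (T4) sends any $(c,0)$ to any $(c',0)$ with $c'$ a generator, collapsing all normal forms into a single isomorphism class and giving $\varepsilon(A,1,2)=1$.

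The argument is essentially bookkeeping once the indecomposability criterion is in place, so I do not expect a genuine obstacle; the only point needing care is that the single free parameter $c$ carries no isomorphism invariant, which is exactly the transitivity of $\aut A$ on generating elements of a cyclic group. One should also note in passing that (T1) contributes nothing new: swapping $(c,0)$ to $(0,c)$ and then applying (T3) yields $(-c,0)$, and since $-1$ is a unit this is already identified with $(c,0)$ by (T4).
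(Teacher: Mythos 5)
Your proposal is correct and follows essentially the same route as the paper's proof: both reduce indecomposability to $A=\langle d_1-d_2\rangle$ (ruling out non-cyclic $A$), normalize the pair via (T1) and (T3) so that one entry is $0$ and the other is a generator, and then use the transitivity of $\aut A$ on generators of a cyclic group via (T4) to collapse everything to a single isomorphism class. The only cosmetic difference is your normal form $(c,0)$ versus the paper's $(0,u)$.
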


\begin{proof}
Let $Q=\ext{A,1,(d_1,d_2)}$. The extension is indecomposable if and only if $A=\langle d_1-d_2\rangle$. This is not possible unless $A$ is cyclic.
Now, due to (T1) and (T3), we can assume that $d_1=0$ and $d_2=u$ is a generator of $A$. But then $x\mapsto ux$ is an automorphism of $A$, and thus $\ext{A,f,(0,u)}\simeq\ext{A,f,(0,1)}$ by (T4).
\end{proof}


As $k$ grows, an analysis as in the previous proof becomes infeasible (already for $\varepsilon_{p,3}$, this method would result in a tedious case study). 
To proceed further, we need a better approach. 

Let us start calculating $\varepsilon(A,1,k)$. By (T1), the order of $\bar d$ is irrelevant, hence we can record only the numbers $c_a$, $a\in A$, of occurrences of elements of $A$ in the tuple $\bar d$. Clearly, the sum $\sum_{a\in A} c_a$ must be equal to $k$, the length of $\bar d$.
Let \[C(A,k)=\{(c_a:a\in A):\ \sum_{a\in A} c_a=k \text{ and the corresponding extension is indecomposable}\}.\]
If $A$ is cyclic, the corresponding extension is indecomposable if and only if $\bar d$ is not constant, that is, if and only if $c_a\neq k$ for every $a\in A$. 
Using a standard combinatorial trick \cite[Section 3.3]{MN}, we see that \[|C(\Z_n,k)|=\binom{n+k-1}{n-1}-n.\]
(If $A$ is not cyclic, the condition is more complicated; the discussion is omitted here.)

\begin{lemma}\label{l:enum p}
The number $\varepsilon(A,1,k)$ is equal to the number of orbits of the holomorph $A\rtimes\aut A$ acting on $C(A,k)$ by permuting indices, 
i.e., under the action $(u,\psi)(c_a:a\in A)=(c_{u+\psi(a)}:a\in A)$. 
\end{lemma}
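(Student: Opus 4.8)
The plan is to set up a clean correspondence between isomorphism classes of indecomposable extensions $\ext{A,1,\bar d}$ and orbits of the holomorph action, then verify it is a bijection. First I would fix the combinatorial encoding: by transformation (T1), an extension $\ext{A,1,(d_1,\dots,d_k)}$ depends only on the multiset $\{\!\{d_1,\dots,d_k\}\!\}$, which is recorded faithfully by its occurrence-count vector $(c_a:a\in A)$ with $\sum_{a\in A}c_a=k$. The set of such vectors corresponding to \emph{indecomposable} extensions is exactly $C(A,k)$ by definition. So the assignment $\bar d\mapsto (c_a)$ gives a well-defined surjection from indexed tuples onto $C(A,k)$, and $\varepsilon(A,1,k)$ counts indecomposable extensions up to isomorphism.

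The heart of the argument is to show that two vectors in $C(A,k)$ yield isomorphic extensions if and only if they lie in the same orbit of the holomorph $A\rtimes\aut A$ under the stated action. For this I would invoke Theorem~\ref{thm:iso} with $f=f'=1$, $A=A'$. In that setting condition (E1) reads $\psi\cdot 1=1\cdot\psi$, which is automatic for any isomorphism $\psi$, so it imposes nothing; since $A$ is the target, $\psi$ ranges over $\aut A$. Because $f=1$ we have $\im{1-f}=\{0\}$, so the coset condition (E2), namely $\psi(d_i)-d'_{\pi(i)}\in a+\im{1-f'}=\{a\}$, collapses to the equation $\psi(d_i)=d'_{\pi(i)}+a$ for all $i$. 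Reading this on the level of occurrence vectors, applying $\psi\in\aut A$ followed by the translation $x\mapsto x+a$ and reindexing by the bijection $\pi$ is precisely the statement that $(c'_a)$ is obtained from $(c_a)$ by the holomorph element $(a,\psi)$ acting as $(u,\psi)(c_a:a\in A)=(c_{u+\psi(a)}:a\in A)$ (modulo tracking whether $\psi$ or $\psi^{-1}$ and the sign of the translation appear, which I would pin down carefully). Thus the existence of $(\pi,\psi,a)$ satisfying (E1)--(E2) is equivalent to the two vectors being in one holomorph orbit.

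Having established this equivalence, the lemma follows formally: isomorphism classes of indecomposable extensions $\ext{A,1,\bar d}$ are in bijection with $(A\rtimes\aut A)$-orbits on $C(A,k)$, so $\varepsilon(A,1,k)$ equals the number of such orbits. I would also note that the action is genuinely well-defined, i.e. that $A\rtimes\aut A$ preserves $C(A,k)$: translations and automorphisms send indecomposable tuples to indecomposable tuples, since indecomposability of $\ext{A,1,\bar d}$ is by Observation~\ref{o:ext} the condition that $\{d_i-d_j\}$ generates $A$ (here $\im{1-f}=0$), a property invariant under translation and under applying an automorphism of $A$.

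The main obstacle I expect is purely bookkeeping: reconciling the direction of the maps in Theorem~\ref{thm:iso} (where $\psi:A\to A'$ and the reindexing goes $\pi:I\to I'$) with the prescribed holomorph action $(u,\psi)(c_a)=(c_{u+\psi(a)})$, so that ``$(c')$ is in the orbit of $(c)$'' matches ``$\ext{A,1,\bar d}\simeq\ext{A,1,\bar d'}$'' with the correct convention for translating indices versus translating values, and the correct sign/inverse on $u$ and $\psi$. This is a routine but error-prone unwinding of definitions; once the conventions are fixed consistently the equivalence is immediate, and no further calculation is needed.
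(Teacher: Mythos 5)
Your proposal is correct and follows essentially the same route as the paper: the paper's proof also reduces to the isomorphism criterion of Theorem~\ref{thm:iso} (packaged as the transformations (T1)--(T4), which with $f=1$ collapse to permutation, simultaneous translation, and simultaneous application of an automorphism) and then identifies these with the holomorph action on occurrence vectors. Your extra check that the action preserves $C(A,k)$ via Observation~\ref{o:ext} is a harmless addition that the paper leaves implicit.
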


\begin{proof}
The transformation (T2) is trivial if $f=1$.
The transformation (T3) corresponds to the action of the mapping $x\mapsto x+u$ on the indices of the sequence $\bar c$.
The transformation (T4) corresponds to the action of the mapping $\psi\in\aut A=C_{\aut A}(1)$ on the indices of $\bar c$. 
Therefore, two sequences $\bar d,\bar d'$ yield isomorphic quandles if and only if the corresponding sequences $\bar c,\bar c'$ are in the same orbit of the action of the holomorph $A\rtimes\aut{A}$ on $C(A,k)$.
\end{proof}

To calculate the number of orbits, we can use Burnside's orbit counting lemma:
\[\varepsilon(A,1,k)=\frac{1}{|A|\cdot|\aut A|}\cdot\sum_{g\in A\rtimes\aut A} \mathrm{Fix}(g),\]
where $\mathrm{Fix}(g)$ denotes the number of sequences from $C(A,k)$ fixed by $g$.

\begin{proposition}\label{p:enum_eps}
For every $k$, we have
\begin{align*}
	\varepsilon_{2,k}&=\left\lfloor\frac k2\right\rfloor,\\ 
	\varepsilon_{3,k}&=\frac1{12}\left(k^2+6k-4+\xi_k\right),\text{ where } 
	\xi_k= \begin{cases} 4\text{ if } k\equiv0\pmod 6,\\ 1\text{ if } k\equiv3\pmod 6,\\ 0\text{ if } k\equiv2,4\pmod 6,\\ -3\text{ if } k\equiv1,5\pmod 6, \end{cases} \\
	\varepsilon_{4,3}&=2,\quad \varepsilon_{5,3}=2,\quad \varepsilon(\Z_2^2,1,3)=1.
\end{align*}
\end{proposition}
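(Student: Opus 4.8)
The plan is to compute each of the listed values $\varepsilon(A,1,k)$ using Lemma~\ref{l:enum p} together with Burnside's orbit counting lemma, exactly as set up immediately before the proposition. So for each case I would identify the relevant holomorph $A\rtimes\aut A$, enumerate its elements, compute $\mathrm{Fix}(g)$ for each $g$ (the number of indecomposable tuples $\bar c\in C(A,k)$ invariant under the index permutation induced by $g$), and average. The small sporadic values ($\varepsilon_{4,3}$, $\varepsilon_{5,3}$, $\varepsilon(\Z_2^2,1,3)$) can be handled by a quick direct count of orbits, since $k=3$ is small and $|C(A,3)|$ is tiny after subtracting constant tuples.

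For $\varepsilon_{2,k}=\lfloor k/2\rfloor$: here $A=\Z_2$, $\aut{\Z_2}$ is trivial, and the holomorph is $\Z_2$ acting by swapping the two coordinates $(c_0,c_1)$. A tuple is determined by $(c_0,c_1)$ with $c_0+c_1=k$, indecomposable meaning $c_0,c_1\neq k$, i.e.\ both nonzero is not required — rather $c_0\ne k$ and $c_1\ne k$, which excludes only the two constant tuples. I would count the $k+1$ compositions, remove the $2$ constant ones, and apply Burnside with the single nontrivial swap: the swap fixes the tuple with $c_0=c_1$ (only when $k$ is even), so $\varepsilon_{2,k}=\tfrac12\big((k-1)+[k\text{ even}]\big)=\lfloor k/2\rfloor$.

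For $\varepsilon_{3,k}$: here $A=\Z_3$, $\aut{\Z_3}\cong\Z_2$, so the holomorph $\Z_3\rtimes\Z_2\cong S_3$ acts on $(c_0,c_1,c_2)$ by permuting the three coordinates (every permutation of $\Z_3$ arising this way is in fact all of $S_3$). Thus $\varepsilon_{3,k}$ is the number of $S_3$-orbits on indecomposable compositions of $k$ into three parts. By Burnside I would sum $\mathrm{Fix}(g)$ over the six elements of $S_3$: the identity contributes $|C(\Z_3,k)|=\binom{k+2}{2}-3$; each transposition fixes tuples with two equal coordinates; each $3$-cycle fixes tuples with all three coordinates equal (possible only when $3\mid k$, and then the constant tuple is excluded, so the count is $0$ or adjusted). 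Carrying out the arithmetic of these fixed-point counts and dividing by $6$ produces a quadratic in $k$ with a periodic correction term, which I would organize exactly into the stated closed form with the residue-dependent constant $\xi_k$. The residues mod~$6$ (rather than mod~$3$) appear because the parity of $k$ governs which transposition-fixed tuples exist while divisibility by $3$ governs the $3$-cycle contribution, and the two interact.

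The main obstacle — though it is bookkeeping rather than a conceptual difficulty — is the case analysis for $\varepsilon_{3,k}$: one must track the fixed-point counts of transpositions and $3$-cycles as functions of $k \bmod 2$ and $k \bmod 3$ simultaneously, correctly handle the subtraction of the constant (decomposable) tuples inside each $\mathrm{Fix}(g)$ rather than only in the identity term, and then verify that the six residue classes mod~$6$ collapse into the four cases of $\xi_k$. A small subtlety worth stating explicitly is that removing decomposable tuples is compatible with the group action (the constant tuples form a single orbit, or are fixed points, so Burnside applies to the restricted action on $C(\Z_3,k)$), which justifies subtracting them before averaging. Once these fixed-point formulas are pinned down, the remaining steps are routine algebraic simplification.
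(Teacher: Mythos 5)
Your proposal is correct and follows essentially the same route as the paper: the paper also applies Burnside's lemma to the holomorph action of Lemma \ref{l:enum p} on $C(A,k)$, with the identity contributing $\binom{k+2}{2}-3$ for $\Z_3$, each transposition contributing $\lfloor k/2\rfloor$, and each $3$-cycle contributing $1$ precisely when $3\mid k$. One small correction to your parenthetical on the $3$-cycles: the decomposable tuples are those with some $c_a=k$ (i.e.\ $\bar d$ constant), not those with all $c_a$ equal, so the tuple $(k/3,k/3,k/3)$ is indecomposable and must be \emph{counted}, not excluded --- dropping it would make the Burnside average non-integral, so the bookkeeping is self-checking.
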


\begin{proof}
For $\varepsilon_{2,k}$, we consider pairs $(c_0,c_1)$ such that $c_0+c_1=k$, $c_i\neq k$. The holomorph acts on indices as the permutation group $\langle(0\ 1)\rangle$, hence the orbits can be uniquely represented by the pairs with $c_0\leq c_1$, and thus there are $\lfloor\frac k2\rfloor$ orbits. 

For $\varepsilon_{3,k}$, we have $C(\Z_3,k)=\{(c_0,c_1,c_2):\ c_0+c_1+c_2=k,\ c_i\neq k\}$, and the holomorph acts on indices as the group $G=\langle(0\ 1\ 2),(0\ 1)\rangle=S_3$, the symmetric group on three elements. Applying Burnside's formula, we obtain 
\[\varepsilon_{3,k}=\frac16\cdot\left(\binom{k+2}2-3+2\cdot\zeta_k+3\cdot\left\lfloor\frac k2\right\rfloor\right),\]
where $\zeta_k$ counts the number of triples fixed by a 3-cycle, that is, $\zeta_k=1$ if $k\equiv0\pmod 3$, and $\zeta_k=0$ otherwise.
The last term, $\lfloor\frac k2\rfloor$, is the number of triples fixed by a 2-cycle: such triples must have two entries equal to a number $\leq k/2$.
Replacing $\lfloor\frac k2\rfloor$ by $\frac k2$ plus 0 or $-\frac12$, depending on parity of $k$, we obtain the expression stated above.

For $\varepsilon_{4,k}$, we have $C(\Z_4,k)=\{(c_0,c_1,c_2,c_3):\ c_0+c_1+c_2+c_3=k,\ c_0+c_2\neq k,\ c_1+c_3\neq k\}$ (here, the indecomposability condition requires that the differences $d_i-d_j$ contain 1 or 3). The holomorph acts as the group $\langle(0\ 1\ 2\ 3),(1\ 3)\rangle=D_8$, the dihedral group on 8 elements.
For $k=3$, the only admissible quadruples contain 2,1,0,0 or 1,1,1,0 (in an arbitrary order), and thus the Burnside's formula gives $(12+2\cdot2)/8=2$, where the first term comes from $g=1$, the identity, and the second term comes from the two transpositions.

For $\varepsilon_{5,k}$, we have $C(\Z_5,k)=\{(c_0,...,c_4):\ \sum c_i=k,\ c_i\neq k\}$ and the holomorph acts as the group $\langle(0\ 1\ 2\ 3\ 4),(1\ 4)(2\ 3)\rangle$.
For $k=3$, the only admissible quadruples contain 2,1,0,0,0 or 1,1,1,0,0, and thus the Burnside's formula gives $(30+5\cdot2)/20=2$, where the first term comes from $g=1$ and the second one from the five permutations with two 2-cycles.

For $\varepsilon(\Z_2^2,1,3)$, the set $C(\Z_2^2,3)$ contains only four quadruples consisting of 1,1,1,0, and the holomorph acts as the symmetric group on four elements, hence the Burnside's formula gives $(4+6\cdot2+8\cdot1)/24=1$, where the first term comes from $g=1$, the second from transpositions and the third from 3-cycles.
\end{proof}

\begin{corollary}\label{Iso2p}
There are, up to isomorphism, $(3p-1)/2$ quasi-affine quandles of order $2p$, for a prime $p>2$.
\end{corollary}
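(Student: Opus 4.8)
The plan is to specialize the enumeration formula of Proposition~\ref{p:enum_pq}(2) to the two distinct primes $p$ and $2$, and then to read off the two ``correction'' terms $\varepsilon_{\bullet,\bullet}$ from the explicit values already established in Propositions~\ref{p:enum_2} and~\ref{p:enum_eps}. Concretely, I would apply Proposition~\ref{p:enum_pq}(2) with its ``$q$'' set equal to $2$, so that the order $pq$ becomes $2p$; the formula then asserts that the number of quasi-affine quandles of order $2p$, up to isomorphism, equals $2p-p-2+1+\varepsilon_{p,2}+\varepsilon_{2,p}=(p-1)+\varepsilon_{p,2}+\varepsilon_{2,p}$.

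It remains to evaluate the two terms, keeping the asymmetric indexing $\varepsilon_{n,k}=\varepsilon(\Z_n,1,k)$ firmly in mind. Since $\Z_p$ is cyclic, Proposition~\ref{p:enum_2} gives $\varepsilon_{p,2}=\varepsilon(\Z_p,1,2)=1$. For the other term, Proposition~\ref{p:enum_eps} records $\varepsilon_{2,k}=\lfloor k/2\rfloor$, and because the hypothesis $p>2$ forces $p$ to be odd, this yields $\varepsilon_{2,p}=\lfloor p/2\rfloor=(p-1)/2$. Substituting both values finishes the computation:
\[
(p-1)+1+\frac{p-1}{2}=p+\frac{p-1}{2}=\frac{3p-1}{2},
\]
which is the claimed count. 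Note that the oddness of $p$ is exactly what makes $3p-1$ even, so that $(3p-1)/2$ is genuinely an integer, as a cardinality must be.

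There is no serious obstacle in this argument: it is a direct substitution into results proved above. The one point requiring care is the bookkeeping of which term is supplied by which proposition. The term $\varepsilon_{p,2}$ counts the indecomposable $f=1$ extensions over the orbit group $\Z_p$ (so it is governed by Proposition~\ref{p:enum_2}), whereas $\varepsilon_{2,p}$ counts the indecomposable $f=1$ extensions over $\Z_2$ with $p$ orbits (so it is governed by the $\varepsilon_{2,k}$ formula of Proposition~\ref{p:enum_eps}); interchanging the two would spoil the leading behaviour in $p$ and give the wrong total. Since the combined expression $\varepsilon_{p,2}+\varepsilon_{2,p}$ appearing in Proposition~\ref{p:enum_pq}(2) is symmetric, it is in fact enough to evaluate each of these $\varepsilon$-values once and add them, which is precisely what the plan does.
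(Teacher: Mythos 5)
Your proof is correct and is exactly the derivation the paper intends (the corollary is stated without an explicit proof, immediately after Proposition~\ref{p:enum_eps}): substitute $q=2$ into Proposition~\ref{p:enum_pq}(2) and evaluate $\varepsilon_{p,2}=1$ via Proposition~\ref{p:enum_2} and $\varepsilon_{2,p}=\lfloor p/2\rfloor=(p-1)/2$ via Proposition~\ref{p:enum_eps}. Your bookkeeping of the asymmetric notation $\varepsilon_{n,k}=\varepsilon(\Z_n,1,k)$ is the right point to be careful about, and the result checks against the table entries for $n=6,10,14$.
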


In Table \ref{Fig:count_affine}, we display the number of quasi-affine, affine, and affine latin quandles of orders up to 15 (note that connected medial quandles are affine, and if finite then also latin, see \cite[Section 7]{HSV}). For prime orders, see Proposition \ref{p:enum_p}. For orders $2p$ we use Corollary~\ref{Iso2p}.
For orders $4,9,15$, combine Propositions \ref{p:enum_pq} and \ref{p:enum_eps}. To complete orders 8 and 12, we need a more detailed analysis in the case when $1-f$ is neither an automorphism, nor zero, see Examples \ref{Ex:8} and \ref{Ex:12}. The numbers of affine quandles come from \cite{Hou}.

\begin{table}[h]
$$\begin{array}{|r|rrrrrrrrrrrrrrr|}\hline
n 															 &1&2&3&4&5&6&7&8& 9&10&11&12&13&14&15\\\hline
\text{quasi-affine}              &1&1&2&3&4&4&6&9&12& 7&10&17&12&10&14 \\
\text{affine}   								 &1&1&2&3&4&2&6&7&11& 4&10& 6&12& 6& 8 \\
\text{affine latin}   					 &1&0&1&1&3&0&5&2& 8& 0& 9& 1&11& 0& 3 \\\hline
\end{array}$$
\caption{The number of quasi-affine, affine, and affine latin quandles of order $n$, up to isomorphism.}
\label{Fig:count_affine}
\end{table}

\begin{example}\label{Ex:8}
We will calculate the number of quasi-affine quandles of order 8 up to isomorphism, following the procedure outlined at the beginning of this section.
For $k=1$, we have $|A|=8$ and we obtain precisely the connected affine quandles of order $8$. It is well known (cf. \cite{Hou}) that there are two of them.
For $k=2$, we have $A=\Z_4$ or $A=\Z_2^2$. Let $\{1,f,g\}$ be representatives of conjugacy classes of $\aut{\Z_2^2}$, with $f$ of order 2 and $g$ of order 3.
The contribution is 
\[\varepsilon(\Z_4,1,2)+\varepsilon(\Z_4,-1,2)+\varepsilon(\Z_2^2,1,2)+\varepsilon(\Z_2^2,f,2)+\varepsilon(\Z_2^2,g,2)=
1+\varepsilon_{2,2}+0+\varepsilon_{2,2}+1=4,\]
using Propositions \ref{p:enum_2}, \ref{p:enum_red}, \ref{p:enum_2}, \ref{p:enum_red}, and \ref{cor:latin_orbits}, respectively.
For $k=4$, we have $A=\Z_2$ and the contribution is $\varepsilon_{2,4}=2$.
For $k=8$, we obtain one projection quandle of order 8.
The total is $2+4+2+1=9$ quasi-affine quandles of order 8 up to isomorphism.
\end{example}

\begin{example}\label{Ex:12}
We will calculate the number of quasi-affine quandles of order 12 up to isomorphism, following the procedure outlined at the beginning of this section.
For $k=1$, we obtain precisely the connected affine quandles of order $12$, which decompose to a direct product of one of order 4 and one of order 3. There is precisely one such pair.
For $k=2$, we have $A=\Z_6$, contributing $\varepsilon(\Z_6,1,2)+\varepsilon(\Z_6,-1,2)=\varepsilon_{6,2}+\varepsilon_{2,2}=2$ quandles, using Propositions \ref{p:enum_2} and \ref{p:enum_red}, respectively.
For $k=3$, we have $A=\Z_4$ or $A=\Z_2^2$. Let $\{1,f,g\}$ be representatives of conjugacy classes of $\aut{\Z_2^2}$, with $f$ of order 2 and $g$ of order 3.
The contribution is 
\[\varepsilon(\Z_4,1,3)+\varepsilon(\Z_4,-1,3)+\varepsilon(\Z_2^2,1,3)+\varepsilon(\Z_2^2,f,3)+\varepsilon(\Z_2^2,g,3)=
\varepsilon_{4,3}+\varepsilon_{2,2}+1+\varepsilon_{2,3}+1=6,\]
using Propositions \ref{p:enum_eps}, \ref{p:enum_red}, \ref{p:enum_eps}, \ref{p:enum_red}, and \ref{cor:latin_orbits}, respectively.
For $k=4$, we have $A=\Z_3$, and the contribution is $\varepsilon(\Z_3,1,4)+\varepsilon(\Z_3,-1,4)=3+1=4$.
For $k=6$, we have $A=\Z_2$, and the contribution is $\varepsilon_{2,6}=3$.
For $k=12$, we obtain one projection quandle of order 12.
The total is $1+2+6+4+3+1=17$ quasi-affine quandles of order 12 up to isomorphism.
\end{example}

\end{document}